\theoremstyle{plain}
\newtheorem{ithm}{Theorem}
\newtheorem{iconj}{Conjecture}
\newtheorem{theorem}{Theorem}[section]
\newtheorem{prop}[theorem]{Proposition}
\newtheorem{lemma}[theorem]{Lemma}
\theoremstyle{definition}
\newtheorem{definition}[theorem]{Definition}
\newtheorem{remark}[theorem]{Remark}
\long\def\symbolfootnote[#1]#2{\begingroup
\def\thefootnote{\fnsymbol{footnote}}\footnote[#1]{#2}\endgroup}
\def\lra{\longrightarrow}
\def\SL{{\bf SL}}
\def\GL{{\bf GL}}
\def\PGL{{\bf PGL}}
\def\cA{\mathcal{A}}
\def\sgn{\mathrm{sgn}}
\def\N{\mathrm{N}}
\def\1{{\mathbf 1}}
\DeclareMathOperator{\coeff}{coeff}
\DeclareMathOperator{\diag}{diag}
\DeclareMathOperator{\Hom}{Hom} 
\DeclareMathOperator{\sign}{sign}
 \DeclareMathOperator{\Tr}{Tr}
\DeclareMathOperator{\ord}{ord}
 \DeclareMathOperator{\real}{Re}
\DeclareMathOperator{\res}{res}
\DeclareMathOperator{\cores}{cores}
\DeclareMathOperator{\hd}{hd}
\DeclareMathOperator{\Irr}{Irr}
\DeclareMathOperator{\Image}{Image}
\DeclareMathOperator{\id}{id}
\DeclareMathOperator{\weight}{weight}
\newcommand{\Sh}{\mathrm{Sh}}
\newcommand{\stack}[2]{\genfrac{}{}{0pt}{}{#1}{#2}}
\newcommand{\mscr}{\mathscr }
\def\l{\ell}
\def\fa{\mathfrak{a}}
\def\fp{\mathfrak{p}}
\def\fZ{\mathfrak{Z}}
\def\cN{\mathcal{N}}
\def\Z{\mathbf{Z}}
\def\Q{\mathbf{Q}}
\def\C{\mathbf{C}}
\def\B{\mathbf{B}}
\def\D{\mathbf{D}}
\def\R{\mathbf{R}}
\def\bdf{\begin{defn}}
\def\edf{\end{defn}}
\def\cM{\mathcal{M}}
\def\cK{\mathcal{K}}
\def\cL{\mathcal{L}}
\def\cD{\mathcal{D}}
\def\cO{\mathcal{O}}
\def\cR{\mathcal{R}}
\def\cP{\mathcal{P}}
\def\cJ{\mathcal{J}}
\def\cV{\mathcal{V}}
\def\ff{\mathfrak{f}}
\def\fb{\mathfrak{b}}
\def\fc{\mathfrak{c}}
\def\Gal{{\rm Gal}}
\def\ra{\rightarrow}
\def\sP{\mscr{P}}
\def\mc{\mathcal}
\def\cF{{\cal F}}
\def\cQ{{\cal Q}}
\begin{document}
\title{Integral Eisenstein cocycles on $\GL_n$, II: \\  Shintani's method }

\author{Pierre Charollois\footnote{Partially supported by the grant  R\'EGULATEURS ``ANR-12-BS01-0002''} \\ Samit Dasgupta\footnote{Partially supported by NSF CAREER grant DMS-0952251}
 \\ Matthew Greenberg\footnote{Partially supported by an NSERC discovery grant}}

\maketitle

\begin{abstract}
We define a cocycle on $\GL_n(\Q)$ using Shintani's method.  
This construction is closely related to earlier work of Solomon and Hill, but
differs in that the cocycle property is achieved through the introduction of an auxiliary perturbation vector $Q$.
As a corollary of our result we obtain a new proof of a theorem of Diaz y Diaz and Friedman on signed fundamental
domains, and give a cohomological reformulation of Shintani's proof of the Klingen--Siegel rationality theorem
on partial zeta functions of totally real fields.

Next we  relate the Shintani cocycle to the Sczech cocycle by showing that the two differ by the sum of an explicit coboundary and  a simple ``polar'' cocycle.  This generalizes
a result of Sczech and Solomon in the case $n=2$. 

Finally, we introduce an integral version of our cocycle by smoothing at an auxiliary prime $\ell$.
This integral refinement has strong arithmetic consequences. We showed in previous work 
 that certain specializations of the smoothed class yield the $p$-adic $L$-functions
of totally real fields.  Furthermore, combining our cohomological construction with a theorem of Spiess, one deduces that  that the order of vanishing of these $p$-adic $L$-functions is at least as large as the
expected one.

\end{abstract}

 \tableofcontents

\section*{Introduction}

In this paper, we study a certain ``Eisenstein cocycle" on $\GL_n(\Q)$ defined using Shintani's method.  Our construction follows previous works of Solomon, Hu,  Hill, Spiess, and Steele in this direction (\cite{So}, \cite{husolomon}, \cite{hill}, \cite{spiess}, \cite{steele}).

We study three main themes in this paper.  First, we define an $(n-1)$-cocycle on $\GL_n(\Q)$ valued in a certain space of power series denoted $\R((z))^{\hd}$.  The basic idea of defining a  cocycle using Shintani's method is well-known; the value of the cocycle on a tuple of matrices is the Shintani--Solomon generating series associated to the simplicial cone whose generators are the images of a fixed vector under the action of these matrices.  The difficulty in defining a cocycle stems from two issues: choosing which boundary faces to include in the definition of the cone, and dealing with degenerate situations when the generators of the cone do not lie in general position.  Hill's method is to embed $\R^n$ into a certain ordered field with $n$ indeterminates, and to perturb the generators of the cone using these indeterminates so that the resulting vectors are always in general position.  The papers \cite{steele} and \cite{spiess} use Hill's method.
Our method is related, but somewhat different.  We choose an auxiliary irrational vector $Q \in \R^n$ and include a face of the simplicial cone if perturbing the face by this vector brings it into the interior of the cone.   
We learned during the writing of this paper
that this perturbation idea was studied much earlier by Colmez in unpublished work 
for the purpose of constructing Shintani domains \cite{colmez}.  Colmez's technique was  used by Diaz y Diaz and Friedman in \cite{ddf}.
However the application of this method to the cocycle property appears to be novel.

 Using formulas of Shintani and Solomon, we prove that the cocycle we construct specializes under the cap product with certain
 homology classes to yield the special values of partial zeta functions of totally real fields of degree $n$ at nonpositive integers.
This is a cohomological reformulation of Shintani's calculation of these special values and his resulting proof of the Klingen--Siegel theorem
on their rationality.

In 1993, Sczech introduced in \cite{sczech} an  Eisenstein  cocycle  on $\GL_n(\Q)$ that enabled him to give  another proof of the Klingen--Siegel theorem.  Our second main result is that the cocycles defined using  Shintani's method and Sczech's method are in fact cohomologous.  The fact that such a result should hold has long been suspected by experts in the field;  all previous attempts were restricted to the case $n=2$ (see for instance  \cite{Scznote}, \cite[\S7]{SoJNT} or \cite[\S5]{hill}). 
One technicality is that the cocycles are naturally defined with values in different modules, so  we first  define a common module where the cocycles can be compared, and then we provide an explicit coboundary relating them.

The  third  and final theme explored in this paper is 
 a smoothing process that allows for the definition of an {\em integral} version of the Shintani cocycle.  The smoothing method was introduced in  our earlier paper \cite{pcsd}, where we defined an integral version of the Eisenstein cocycle constructed by Sczech.
 The integrality property of the smoothed cocycles has strong arithmetic consequences.
We showed in \cite{pcsd} that one can use the smoothed Sczech--Eisenstein cocycle to construct the $p$-adic $L$-functions of totally real fields and furthermore to study the analytic behavior of these $p$-adic $L$-functions at $s=0$.  In particular, we showed using work of Spiess \cite{sphmf}
 that the order of vanishing of these $p$-adic $L$-functions at $s=0$ is at least equal to the expected one, as conjectured by Gross in \cite{gross}.  The formal nature of our proofs implies that these arithmetic results could be deduced entirely from the integral version of the Shintani cocycle constructed in this paper. In future work, we will explore further the leading terms of these $p$-adic $L$-functions at $s=0$ using our cohomological method  \cite{ds}.

We conclude the introduction by stating our results in greater detail and indicating the direction of the proofs.
Sections~\ref{s:compare} and~\ref{s:smooth} both rely on Sections~\ref{s:cocycle} and~\ref{s:zeta} but are independent from each other. Only Section~\ref{s:smooth} uses results from the earlier paper   [CD].

\subsection*{$Q$-perturbation, cocycle condition and fundamental domains}

Fix an integer $n \ge 2$, and let $\Gamma = \GL_n(\Q)$.  Let $\cK$ denote the abelian group of
functions on $\R^n$ generated by the characteristic functions of rational open simplicial cones, i.e.\ 
sets of the form $\R_{>0} v_1 + \R_{>0}v_2 + \cdots + \R_{>0}v_r$ with linearly independent $v_i \in \Q^n$.

Let $\R^n_{\Irr} \subset \R^n$ denote the set of
vectors with the property that their $n$ components are linearly independent over $\Q$.
Let $\cQ$ denote the set of equivalence classes of $\R^n_{\Irr}$ under multiplication by $\R_{>0}$.

Given an $n$-tuple of matrices $A = (A_1, \dotsc, A_n) \in \Gamma^n$, we let 
$\sigma_i \in \Q^n$ denote the leftmost column of $A_i$, i.e.\ the image under $A_i$
of the first standard basis vector.  (In fact replacing this basis vector by any nonzero vector in $\Q^n$ would suffice.)
Fixing $Q \in \R^n_{\Irr}$, we define an element $\Phi_{\mathrm{Sh}}(A, Q) \in \cK$ as follows.
If the $\sigma_i$ are linearly dependent, we simply let $\Phi_{\mathrm{Sh}}(A, Q) = 0$. 
If the $\sigma_i$ are linearly independent,
we define $\Phi_{\mathrm{Sh}}(A, Q) \in \cK$  to be the characteristic function of
the simplicial cone $C = C(\sigma_1, \dotsc, \sigma_n)$ and some of its boundary faces, multiplied by $\sgn(\det(\sigma_1, \dotsc, \sigma_n))$.
A boundary face is included if translation of an element of that face by a small positive multiple of $Q$ moves the element into the interior of $C$.
The property $Q \in \R^n_{\Irr}$ ensures that $Q$ does not lie in  any face of the cone, and hence  translation by a small multiple of $Q$ moves any element of a face into either the interior or exterior of the cone. 
The definition of $\Phi_{\mathrm{Sh}}(A, Q)$ depends on $Q$ only up to its image in $\cQ$.

 Our first key result is the following
cocycle property of $\Phi_{\mathrm{Sh}}$ (see Theorems~\ref{t:cqcocycle} and~\ref{t:phis}).  The function
\begin{equation} \label{e:phiscoc}
 \sum_{i=0}^{n} (-1)^i  \Phi_{\mathrm{Sh}}(A_0, \dotsc, \hat{A}_i, \dotsc, A_n, Q) 
 \end{equation}
lies in the subgroup $\cL \subset \cK$ generated by characteristic functions of wedges, i.e.\ sets
of the form $\R v_1 + \R_{>0}v_2 + \cdots + \R_{>0}v_r$ for some $r \ge 1$ and linearly independent $v_i \in \Q^n$. 
We conclude that the function $\Phi_{\mathrm{Sh}}$ 
defines
a homogeneous $(n-1)$-cocycle on $\Gamma$ valued in the space $\cN$ of functions $\mc Q \rightarrow \cK/\cL$.

Along the way we note that if the $\sigma_i$ are all in the positive orthant of $\R^n$, then in fact
the function (\ref{e:phiscoc}) vanishes.  As a result we obtain another proof of the main theorem of \cite{ddf},
which gives an explicit signed fundamental domain for the action of the group of totally positive units
in a totally real field of degree $n$ on the positive orthant.
In the language of \cite{spiess}, we show that the specialization of $\Phi_{\mathrm{Sh}}$ to the  unit group
 is a {\em Shintani cocycle} (see Theorem~\ref{t:ddf} below).

Using this result and Shintani's explicit formulas for the special values of  zeta functions 
associated to simplicial cones, we recover the following classical result originally proved by Klingen and Siegel.
Let $F$ be a totally real field, and let $\fa$ and $\ff$ be relatively prime integral ideals of $F$.
The partial zeta function of $F$ associated to the narrow ray class of $\fa$ modulo $\ff$ is defined by
\begin{equation} \label{e:zetadef}
 \zeta_{\ff}(\fa, s) = \sum_{\fb \sim_\ff \fa} \frac{1}{\N\fb^s}, \qquad \text{Re}(s) > 1. 
 \end{equation}
Here the sum ranges over integral ideals $\fb \subset F$ equivalent to $\fa$ in the narrow ray class group modulo $\ff$, which we denote $G_\ff$.
The function $\zeta_\ff(\fa, s)$ has a meromorphic continuation to $\C$, with only a simple pole at $s=1$.

\begin{ithm} \label{t:ks} The values $\zeta_{\ff}(\fa, -k)$ for integers $k \ge 0$ are rational.
\end{ithm}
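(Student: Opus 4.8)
The plan is to deduce Theorem~\ref{t:ks} by feeding the signed fundamental domain of Theorem~\ref{t:ddf} into the cohomological specialization of $\Phi_{\mathrm{Sh}}$ established in Section~\ref{s:zeta}, and then reading off rationality from Shintani's explicit evaluation of the generating series attached to a rational simplicial cone. First I would fix a $\Q$-basis of $F$, identifying $F$ with $\Q^n$ and $F\otimes_\Q\R$ with $\R^n$; then fractional ideals become full lattices, the field norm becomes a rational homogeneous form $\N$ of degree $n$, the totally positive elements $F_{\gg0}$ fill out the interior $\mathcal C^+$ of a (generally \emph{non}-rational) simplicial cone, and multiplication embeds the group $E$ of totally positive units congruent to $1$ modulo $\ff$ into $\Gamma = \GL_n(\Q)$ as a free abelian group of rank $n-1$. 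The classical narrow-ray-class dictionary then gives, for $\real(s) > 1$,
\[
\zeta_\ff(\fa, s) \;=\; \N(\fa)^{-s}\sum_{\xi}\N(\xi)^{-s},
\]
where $\xi$ ranges over a set of representatives, modulo $E$, of the $E$-stable set $(1 + \ff\fa^{-1})\cap F_{\gg0}$; since $\N$ is $E$-invariant, the quotient sum makes sense and is a Shintani-type zeta function.

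Next I would choose generators $u_1,\dots,u_{n-1}$ of $E$, form the resulting fundamental class in $H_{n-1}(E,\Z)$, push it into $H_{n-1}(\Gamma,\Z)$, and cap it with $\Phi_{\mathrm{Sh}}$. Evaluating the resulting $E$-invariant function $\cQ\to\cK/\cL$ at any sufficiently generic class $Q$, Theorem~\ref{t:ddf} --- equivalently the main result of~\cite{ddf}, recovered here from the cocycle property together with the vanishing of~\eqref{e:phiscoc} when the generators lie in the positive orthant --- shows that it is represented by an \emph{honest} element $D = \sum_j\varepsilon_j\,\1_{C_j}\in\cK$, $\varepsilon_j=\pm1$, where the $C_j$ are rational simplicial cones contained in $\mathcal C^+$ and $\sum_{v\in E}v_\ast D = \1_{\mathcal C^+}$, so that $D$ is a signed fundamental domain for the action of $E$ on $\mathcal C^+$. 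Unfolding the fundamental-domain sum against $D$ (legitimate because $\N$ is $E$-invariant) yields
\[
\zeta_\ff(\fa, s) \;=\; \N(\fa)^{-s}\sum_j\varepsilon_j\sum_{\xi\in(1+\ff\fa^{-1})\cap C_j}\N(\xi)^{-s} \;=\; \N(\fa)^{-s}\sum_j\varepsilon_j\,Z_j(s),
\]
a finite signed sum of zeta functions $Z_j$ of \emph{rational} data: a rational cone, the rational lattice $\ff\fa^{-1}$, and a rational offset. This is exactly the specialization of $\Phi_{\mathrm{Sh}}$ computed in Section~\ref{s:zeta}: the values $\zeta_\ff(\fa,-k)$, $k\ge0$, emerge (up to the explicit factor $\N(\fa)^{k}$) as the prescribed coefficients of the element of $\R((z))^{\hd}$ obtained by capping the cocycle with the cycle attached to $(\fa,\ff)$.

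It then remains to invoke rationality of $Z_j(-k)$. After an integral change of variables I may assume the lattice is $\Z^n$ and each generator $w_i^{(j)}$ of $C_j$ lies in $\Z^n$; then $C_j$ intersected with the shifted lattice splits as a finite disjoint union of translated sub-semigroups $\rho + \sum_i\Z_{\ge0}w_i^{(j)}$ with $\rho$ ranging over a finite set of \emph{rational} points, and Shintani's generating-series computation (the very formula underlying $\Phi_{\mathrm{Sh}}$ in Section~\ref{s:zeta}) expresses $Z_j(-k)$ as a finite $\Q$-linear combination of products of Bernoulli polynomials $B_r\in\Q[x]$ evaluated at the rational barycentric coordinates of the points $\rho$. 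Hence $Z_j(-k)\in\Q$ for every $j$, and since $\N(\fa)^{k}\in\Z$ the displayed identity gives $\zeta_\ff(\fa,-k)\in\Q$.

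The main obstacle lies not in this deduction but in its two external inputs, both established earlier: the sharpened cocycle property (Theorem~\ref{t:ddf}) and Shintani's explicit special-value formula for rational cones. What genuinely needs care here is the bookkeeping --- that the narrow-ray-class dictionary feeds exactly the pair $(1+\ff\fa^{-1},\ F_{\gg0})$ into the cycle used in Section~\ref{s:zeta}, that the cohomological cap product recovers $\zeta_\ff(\fa,-k)$ on the nose rather than a twist, and, most subtly, that one keeps consistently separate the ``analytic'' coordinates in which $\mathcal C^+$ lives and in which $\N=\prod_\nu\ell_\nu$ has transcendental coefficients from the ``algebraic'' $\Q$-structure $\Q^n\cong F$ in which the cones, the lattice, the offset and the norm form are simultaneously rational. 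Working in the latter coordinates is what makes Shintani's rationality manifest, so the delicate point is to ensure that all identifications across Sections~\ref{s:cocycle} and~\ref{s:zeta} respect this $\Q$-structure.
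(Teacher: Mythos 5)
Your overall route is the same as the paper's (Theorem~\ref{t:psiszeta}): unfold $\zeta_\ff(\fa,s)$ against the signed fundamental domain supplied by Theorem~\ref{t:ddf}, reducing everything to special values of Shintani cone zeta functions attached to rational cones and the rational norm form. Up to that point your bookkeeping matches the paper's proof (its equations (\ref{e:yeq}), (\ref{e:zjw}), (\ref{e:jwy}); note also that the positivity condition~\eqref{E:Mpos} needed for Theorem~\ref{t:shintani} holds because the cones lie in the positive orthant, cf.\ Remark~\ref{r:posorthant}).

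The gap is in your last step. You assert that Shintani's evaluation expresses each $Z_j(-k)$ as a finite $\Q$-linear combination of products of Bernoulli polynomials at the rational coordinates of the points $\rho$, ``hence $Z_j(-k)\in\Q$.'' That is not what the formula gives. The value is $\Delta^{(k)}h(C_j,v)(zM^t)$ (Theorem~\ref{t:shintani}), where $M$ is built from the real embeddings of $F$ and has irrational entries; expanding as in Lemma~\ref{l:bern}, the Bernoulli polynomials do appear at rational arguments $\sigma^{-1}(a)_i$, but they are multiplied by powers and ratios of the irrational quantities $zM^t\sigma_i$ produced by the polar denominators, and each individual term --- indeed each piece $\Delta^{(k)}_j$ --- lies only in the field generated by the entries of $M$, not in $\Q$. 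Passing to the ``algebraic'' coordinates $\Q^n\cong F$ does not help here: the norm form is rational but is not a product of rational linear forms, and Shintani's method works precisely with its (conjugate, irrational) linear factors. So rationality is not manifest from the explicit formula; it requires an extra symmetrization argument: the invariance of $\Delta^{(k)}$ under permuting the $z_i$ and under scaling them by factors of product $1$ (Lemma~\ref{L:indepofD}), which shows the value is fixed by every automorphism of $\C$ fixing $f_M$, hence lies in $\Q$ because $f_M$ has rational coefficients. This is exactly the paper's Theorem~\ref{t:rational}, and it (or Shintani's equivalent Galois-symmetry argument) is the missing ingredient you must supply before concluding $Z_j(-k)\in\Q$. (By contrast, a manifestly rational coefficient formula as in Lemma~\ref{l:Mtwist} applies only to regular power series, which $h(C_j,v)(zM^t)$ is not; that only becomes available after the $\ell$-smoothing of Section~\ref{s:smooth}.)
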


We prove Theorem~\ref{t:ks} by showing that
\begin{equation} \label{e:zetapairing}
\zeta_{\ff}(\fa, -k) = \langle \Phi_{\mathrm{Sh}}, \fZ_k \rangle
\end{equation} where $\fZ_k \in H_{n-1}(\Gamma, \cN^\vee)$
is a certain homology class depending on $\fa, \ff$, and $k$, and the indicated pairing is the cap product
\begin{equation} \label{e:nv}
 H^{n-1}(\Gamma, \cN) \times H_{n-1}(\Gamma, \cN^\vee) \lra \R, \qquad \cN^\vee = \Hom(\cN, \R). 
 \end{equation}
See Theorem~\ref{t:psiszeta} below for a precise statement.
Combined with a rationality property of our cocycle (Theorem~\ref{t:rational}) that implies that the
cap product $\langle \Phi_{\mathrm{Sh}}, \fZ_k \rangle$ lies in $\Q$, we deduce the desired result.

Our proof of Theorem~\ref{t:ks} is simply a cohomological reformulation of Shintani's original argument.
However, our construction has the
benefit that we  give an explicit signed fundamental domain.  This latter feature is useful for computations
and served as a motivation for \cite{ddf} as well.

\subsection*{Comparison with the Sczech cocycle}

Sczech's  proof of Theorem~\ref{t:ks}   is deduced from  an identity similar to (\ref{e:zetapairing}), but involving a different cocycle.
It leads to explicit formulas  in terms of Bernoulli numbers  that resemble  those of Shintani in \cite{Sh}. A natural question that emerges is whether a direct comparison of the 
two constructions is possible. Our next result, stated precisely in Theorem~\ref{t:compare}, is a proof that the  cocycle on $\Gamma$ defined in Sections~\ref{s:cocycle} and~\ref{s:zeta} using Shintani's method is
cohomologous (after projecting to the $+1$-eigenspace for the action of $\{\pm 1\}$ on $\cQ$) 
to the cocycle defined by Sczech,  up to a simple and minor error term.  Rather than describing the details of Sczech's construction in this
introduction, we content ourselves with explaining the combinatorial mechanism enabling the proof, with an informal discussion
in the language of \cite[\S 2.2]{sczcom}.

For $n$ vectors $\tau_1, \dotsc, \tau_n \in \C^n$, define a rational function of a variable $x \in \C^n$ by
\[ f(\tau_1, \dotsc, \tau_n)(x) = \frac{\det(\tau_1, \dotsc, \tau_n)}{ \langle x, \tau_1 \rangle \cdots \langle x, \tau_n \rangle}.
\]
Given an $n$-tuple of matrices $A = (A_1, \dotsc, A_n) \in \Gamma^n$,  denote by
$A_{ij}$ the $j$th column of the matrix $A_i$.  The function $f$ satisfies a cocycle property (see (\ref{e:fcoc})) that implies that
the assignment $A\mapsto \alpha(A) := f(A_{11}, A_{21}, \dotsc, A_{n1})$ defines a homogeneous $(n-1)$-cocycle on $\Gamma$ valued in
the space of functions on Zariski open subsets of $\C^n$.  The rational function $\alpha(A)$ is not defined on the hyperplanes $\langle x, A_{i1} \rangle = 0$.

Alternatively we consider, for each $x \in \C^n - \{0 \}$, the index $w_i = w_i(A, x)$ giving the leftmost column of $A_i$ not orthogonal
to $x$.  The function $\beta(A)(x) = f(A_{1w_1}, \dotsc, A_{nw_n})(x)$ is then defined on $ \C^n - \{0 \}$, and the assignment $A \mapsto \beta(A)$ 
can also be viewed as a homogeneous $(n-1)$-cocycle on $\Gamma$.

Using an explicit computation, we show that the function $\alpha$ corresponds to our Shintani cocycle (Proposition~\ref{p:shinalpha}), 
whereas the function $\beta$ yields Sczech's cocycle (Proposition~\ref{p:sczbeta}).  A  coboundary relating $\alpha$ and $\beta$ is then given as follows.
Let $A = (A_1, \dotsc, A_{n-1}) \in \Gamma^{n-1}$, and define for $i=1, \dotsc, n-1$:
\[ h_i(A) =  
\begin{cases} f(A_{1w_1}, \dotsc, A_{(i-1)w_{i-1}}, A_{i1}, A_{iw_i}, A_{(i+1)1}, \dotsc, A_{(n-1)1}) & \text{ if } w_i > 1 \\
0 & \text{ if } w_i = 1.
\end{cases}
\]
Let $h = \sum_{i=1}^{n-1} (-1)^i h_i.$   We show that $\beta - \alpha = dh$.  In the case $n=2$, this recovers
Sczech's formula \cite[Page 371]{sczcom}.

\subsection*{Smoothing and applications to classical and $p$-adic $L$-functions}

In Section~\ref{s:smooth}, we fix a prime $\ell$ and we introduce a smoothed version $\Phi_{\mathrm{Sh},\ell}$ of the Shintani cocycle,
essentially by taking a difference between $\Phi_{\mathrm{Sh}}$ and a version of the same shifted by a matrix of
determinant $\ell$.  
The smoothed cocycle is defined on an arithmetic subgroup  $\Gamma_\ell \subset \Gamma$
and shown to satisfy an integrality property (Theorem~\ref{t:smooth}).

Through the connection
of the Shintani cocycle to zeta values given by (\ref{e:zetapairing}),  this integrality property translates  as in [CD]
into corresponding results about special values of zeta functions.  For the interest of the reader, we
have included the statements of these arithmetic results in this introduction.  For the proofs we refer the reader to \cite{pcsd}, where these applications
were already presented.

Our first arithmetic application of the smoothed cocycle is the
following integral refinement of
Theorem~\ref{t:ks}, originally due to Pi.~Cassou-Nogu\`es \cite{cn} and Deligne--Ribet \cite{dr}.
\begin{ithm} \label{t:integral} 
 Let $\fc$ be an integral ideal of $F$ relatively prime to $\ff$ and let $\ell = \N\fc$. The smoothed zeta function
\[  \zeta_{\ff, \fc}(\fa, s) = \zeta_{\ff}(\fa\fc, s) - \N\fc^{1-s}\zeta_\ff(\fa, s) \]
assumes values in $\Z[1/\ell]$ at nonpositive integers $s$.
\end{ithm}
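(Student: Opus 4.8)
The plan is to deduce Theorem~\ref{t:integral} from the integrality property of the smoothed cocycle $\Phi_{\mathrm{Sh},\ell}$ (Theorem~\ref{t:smooth}) by the same cohomological argument used for the smoothed Sczech cocycle in \cite{pcsd}. The bridge is the cap-product formula (\ref{e:zetapairing}), in its precise form Theorem~\ref{t:psiszeta}: for every integer $k\ge 0$ one has $\zeta_\ff(\fa,-k)=\langle\Phi_{\mathrm{Sh}},\fZ_k\rangle$, where $\fZ_k\in H_{n-1}(\Gamma,\cN^\vee)$ is built from a $\Z$-basis of an ideal representing the narrow ray class of $\fa$ modulo $\ff$ together with the totally positive units of $F$. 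Since $\Phi_{\mathrm{Sh},\ell}$ is obtained from $\Phi_{\mathrm{Sh}}$ essentially by subtracting a conjugate by a matrix $\pi$ with $|\det\pi|=\N\fc=\ell$, the strategy is to show that the cap product of $\Phi_{\mathrm{Sh},\ell}$ with a suitable restriction of $\fZ_k$ reproduces exactly the smoothed value $\zeta_{\ff,\fc}(\fa,-k)$, and then to read off integrality from the coefficients of $\Phi_{\mathrm{Sh},\ell}$.

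Concretely, I would first fix $\pi\in M_n(\Z)\cap\GL_n(\Q)$ with $|\det\pi|=\ell$ realizing, on the lattice attached to $\fa$, multiplication by a local generator of $\fc$ --- equivalently the inclusion of the lattice of $\fa\fc$ into that of $\fa$ --- and pass to the arithmetic subgroup $\Gamma_\ell\subset\Gamma$ on which $\Phi_{\mathrm{Sh},\ell}$ is defined, checking that $\fZ_k$ may be taken in $H_{n-1}(\Gamma_\ell,\cN^\vee)$ (after, if necessary, enlarging $\ff$ by a factor supported at $\ell$, which is harmless for the conclusion). Using the $\Gamma_\ell$-equivariance of the pairing (\ref{e:nv}) and writing $\Phi_{\mathrm{Sh},\ell}$ as $\Phi_{\mathrm{Sh}}$ minus its $\pi$-conjugate, the first term yields $\langle\Phi_{\mathrm{Sh}},\fZ_k\rangle=\zeta_\ff(\fa\fc,-k)$, while the $\pi$-conjugated term yields $\langle\Phi_{\mathrm{Sh}},\pi\cdot\fZ_k\rangle$. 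Now $\pi\cdot\fZ_k$ is, up to homothety, the class $\fZ_k$ for the ideal $\fa$ --- the matrix $\pi$ having divided out the factor $\fc$ --- and Shintani's explicit formulas for cone zeta values show that the homothety $\sigma_i\mapsto\pi\sigma_i$ scales the degree-$(-k)$ part of the Shintani--Solomon generating series by $|\det\pi|^{k+1}=\ell^{k+1}$, which is the value of $\N\fc^{1-s}$ at $s=-k$. Hence the $\pi$-term equals $\N\fc^{1+k}\,\zeta_\ff(\fa,-k)$, and subtracting gives $\langle\Phi_{\mathrm{Sh},\ell},\fZ_k\rangle=\zeta_{\ff,\fc}(\fa,-k)$.

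With this identity in hand the result is immediate: by Theorem~\ref{t:smooth} the smoothed cocycle $\Phi_{\mathrm{Sh},\ell}$ takes values in a sub-$\Z[1/\ell]$-module of the coefficient space, an integral refinement of the $\Q$-rational statement of Theorem~\ref{t:rational}; the smoothing at $\ell$ is precisely what renders the Bernoulli-type coefficients occurring in the specialization $\Z[1/\ell]$-integral, in the spirit of the Clausen--von Staudt theorem. Since $\fZ_k$ is represented by an integral chain and the cap product amounts at bottom to extracting a single coefficient of a product of Shintani--Solomon generating series evaluated on these data, it lies in $\Z[1/\ell]$. Thus $\zeta_{\ff,\fc}(\fa,-k)\in\Z[1/\ell]$ for all $k\ge 0$, which is Theorem~\ref{t:integral}.

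The one genuinely delicate point is the matching of the $\pi$-conjugated half of $\Phi_{\mathrm{Sh},\ell}$ with the term $\N\fc^{1-s}\zeta_\ff(\fa,s)$ at $s=-k$: one must choose $\pi$, the cone data and the homology class so that this pairing produces the correct value \emph{on the nose}, with exact control of the class-field-theoretic dictionary relating the narrow ray classes of $\fa\fc$ and $\fa$, of the homothety factor $\ell^{k+1}$, and of the vanishing of any spurious contributions from boundary faces or from the $\cL$-indeterminacy built into the coefficient module $\cN$. This is exactly the bookkeeping carried out in \cite{pcsd} for the Sczech cocycle; since the Shintani cocycle of this paper satisfies the same pairing formula (\ref{e:zetapairing}) and admits the same integral refinement (Theorem~\ref{t:smooth}), the argument transfers with only cosmetic changes, and for the details we refer the reader to \cite{pcsd}.
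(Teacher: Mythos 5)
Your proposal follows essentially the same route as the paper: both prove the identity $\zeta_{\ff,\fc}(\fa,-k)=\langle\Phi_{\mathrm{Sh},\ell},\fZ_{k,\ell}\rangle$ for a suitably modified homology class adapted to the congruence subgroup $\Gamma_\ell$, and then invoke the integrality of the smoothed cocycle (Theorem~\ref{t:smooth}) to conclude, with the detailed bookkeeping deferred to \cite{pcsd} exactly as the paper does. Be aware only that the precise factor $\N\fc^{1-s}$ at $s=-k$ comes from the normalization $\N(\fa)^{1/n}$ versus $\N(\fa\fc)^{1/n}$ in the matrix $M$ together with the explicit factor $\ell$ in the definition (\ref{e:psildef}), not from a naive ``$\det\pi$ scales the generating series'' argument, but this is precisely the bookkeeping carried out in \cite{pcsd}.
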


Cassou--Nogu\`es' proof of   Theorem~\ref{t:integral} is a  refinement of Shintani's method
under the assumption that   $\cO_F/\fc$ is cyclic. The proof of Theorem~\ref{t:integral} that follows from the constructions in this paper
  is essentially a cohomological reformulation of Cassou--Nogu\`es' argument.
For simplicity we assume further  that $\ell = \N\fc$ is prime.   One can define a modified version 
of the homology class $\fZ_k$ denoted $\fZ_{k, \ell}$, such that $ \zeta_{\ff, \fc}(\fa, -k) = \langle\Phi_{\mathrm{Sh},\ell}, \fZ_{k, \ell} \rangle$.
A result from \cite{pcsd} restated in Theorem~\ref{t:smooth} below implies that the cap product
 $\langle \Phi_{\mathrm{Sh},\ell}, \fZ_{k, \ell} \rangle$
lies in $\Z[1/\ell]$, thereby completing the proof of Theorem~\ref{t:integral}.

The final arithmetic application of our results regards the study of the $p$-adic $L$-functions associated to abelian characters of the totally real field $F$.
Let $ \psi\colon \Gal(\overline{F}/F) \lra \overline{\Q}^* $ be a totally even finite order character.  Fix embeddings 
$\overline{\Q} \hookrightarrow \C$ and $\overline{\Q} \hookrightarrow \overline{\Q}_p$, so that $\psi$ can be viewed as taking values in $\C$ or $\overline{\Q}_p$. 
   Let $ \omega\colon \Gal(\overline{F}/F) 
  \longrightarrow \mu_{p-1} \subset \overline{\Q}^* $ denote\symbolfootnote[2]{As usual, replace $\mu_{p-1}$ by $\{\pm 1\}$ when $p=2$.}  the Teichm\"uller character.  Using the integrality properties of our cocycle
  $\Phi_{\mathrm{Sh},\ell}$, one  recovers the following theorem of  Cassou-Nogu\`es \cite{cn}, Barsky
  \cite{barskypapier}  and  Deligne--Ribet \cite{dr}. 

 \begin{ithm} \label{t:padic}
 There is a unique meromorphic $p$-adic $L$-function  $L_{p}(\psi, s)\colon \Z_p \lra \C_p$ satisfying the interpolation property
\[  L_{p}(\psi, 1-k) = L^*(\psi \omega^{-k}, 1-k) \]
 for integers $k \ge 1$, where $L^*$ denotes the  classical $L$-function with Euler factors at the primes dividing $p$ removed.
The function $L_p$ is analytic if $\psi \neq 1$.  If $\psi = 1$, there is 
at most a simple pole at $s=1$ and no other poles.
\end{ithm}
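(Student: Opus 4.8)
The plan is to carry out the measure-theoretic argument of \cite{pcsd}, with the smoothed Sczech cocycle used there replaced by the smoothed Shintani cocycle $\Phi_{\mathrm{Sh},\ell}$ of Section~\ref{s:smooth}; as the introduction indicates, once the integral cocycle is in hand the deduction is formal, so I only sketch it. First I would set up the dictionary between $L$-values and cap products. Fix an integral ideal $\fn$ of $F$ divisible both by the conductor of $\psi$ and by every prime of $F$ above $p$ (and otherwise minimal), so that for each integer $k\ge 1$ one has the finite decomposition $L^{*}(\psi\omega^{-k},1-k)=\sum_{\fa\in G_\fn}\psi\omega^{-k}(\fa)\,\zeta_\fn(\fa,1-k)$, the partial zeta functions carrying no Euler factor at $p$ since $\fn$ is divisible by the primes above $p$. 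Pick next, via Chebotarev, an integral ideal $\fc$ coprime to $\fn p$ with $\ell:=\N\fc$ prime, chosen moreover with $\psi(\fc)\neq 1$ when $\psi\neq 1$. Summing the smoothing identity $\zeta_{\fn,\fc}(\fa,s)=\zeta_\fn(\fa\fc,s)-\N\fc^{1-s}\zeta_\fn(\fa,s)$ against $\psi\omega^{-k}$ at $s=1-k$ and reindexing by $\fa\mapsto\fa\fc$ yields
\[
\sum_{\fa\in G_\fn}\psi\omega^{-k}(\fa)\,\zeta_{\fn,\fc}(\fa,1-k)\;=\;\bigl(\psi\omega^{-k}(\fc)^{-1}-\N\fc^{\,k}\bigr)\,L^{*}(\psi\omega^{-k},1-k).
\]
By the standard Teichm\"uller bookkeeping the factor on the right is the value at $s=1-k$ of an explicit $p$-adic analytic function $e_\fc(s)$, not identically zero; when $\psi(\fc)\neq 1$ its one-unit part keeps $e_\fc$ nowhere vanishing, while for $\psi=1$ it has a simple zero at $s=1$. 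Finally, the smoothed form of the pairing~\eqref{e:zetapairing}, namely $\zeta_{\fn,\fc}(\fa,1-k)=\langle\Phi_{\mathrm{Sh},\ell},\fZ_{k,\ell}(\fa)\rangle$ for homology classes $\fZ_{k,\ell}(\fa)\in H_{n-1}(\Gamma,\cN^\vee)$ (the smoothed analogues of the $\fZ_k$ of Section~\ref{s:zeta}, indexed so as to compute the partial zeta value at $1-k$), rewrites each summand on the left as a cap product of the single class $[\Phi_{\mathrm{Sh},\ell}]\in H^{n-1}(\Gamma,\cN)$ with an explicit homology class.

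The key step is to turn this family of cap products into a $p$-adic measure, and this is the main obstacle. Since $\fc$ is prime to $p$ the number $\ell$ is a unit in $\Z_p$, so Theorem~\ref{t:smooth} (equivalently Theorem~\ref{t:integral}) places every value $\zeta_{\fn,\fc}(\fa,1-k)=\langle\Phi_{\mathrm{Sh},\ell},\fZ_{k,\ell}(\fa)\rangle$ in $\Z[1/\ell]\subset\Z_p$, hence makes the whole family uniformly $p$-adically bounded as $k\ge 1$ and $\fa\in G_\fn$ vary. Working now at the deeper moduli $\fn p^{m}$, one checks that the classes $\fZ_{k,\ell}$ at these levels — with the monomial $\langle\,\cdot\,\rangle^{k-1}$ built into the $k$-dependence — satisfy the norm and distribution relations imposed by the projective system $G_{\fn p^\infty}=\varprojlim_m G_{\fn p^m}$; capping this compatible system against $[\Phi_{\mathrm{Sh},\ell}]$ then produces an $\cO_{\C_p}$-valued measure $\mu_\psi$ on the compact group $G_{\fn p^\infty}$ whose integrals against finite-order characters times $\langle\N(\gamma)\rangle^{k-1}$ recover the left-hand side of the display above. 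Verifying that the smoothed homology classes genuinely glue into a measure, rather than remaining a merely bounded family of numbers, is the crux; this is exactly the point at which the construction of \cite{pcsd} is transported to the Shintani setting, and where the integrality of $\Phi_{\mathrm{Sh},\ell}$ (Theorem~\ref{t:smooth}) is indispensable.

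It then remains to Mellin-transform. Set $L_{p,\fc}(\psi,s):=\int_{G_{\fn p^\infty}}\langle\N(\gamma)\rangle^{-s}\,d\mu_\psi(\gamma)$; the Mazur--Mellin transform of a bounded measure is a $p$-adic analytic, indeed entire, function of $s\in\Z_p$, and by construction $L_{p,\fc}(\psi,1-k)=e_\fc(1-k)\,L^{*}(\psi\omega^{-k},1-k)$ for all $k\ge 1$. Put $L_p(\psi,s):=L_{p,\fc}(\psi,s)/e_\fc(s)$, a meromorphic function on $\Z_p$. For $\psi\neq 1$ the chosen $\fc$ makes $e_\fc$ nowhere zero, so $L_p(\psi,\cdot)$ is analytic; for $\psi=1$ the function $e_\fc$ has a simple zero at $s=1$ while $L_{p,\fc}(1,1)\neq 0$ — the $p$-adic avatar of the non-vanishing of the residue of $\zeta_F$ at $s=1$ — so $L_p(1,\cdot)$ is analytic away from $s=1$ with a simple pole there. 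The interpolation formula $L_p(\psi,1-k)=L^{*}(\psi\omega^{-k},1-k)$ holds by construction, and since the integers $1-k$ with $k\ge 1$ are dense in $\Z_p$ it pins down $L_p$ uniquely; in particular $L_p$ is independent of the auxiliary $\fc$, as one sees by comparing two choices $\fc,\fc'$ through the coincidence of the analytic functions $L_{p,\fc}\,e_{\fc'}$ and $L_{p,\fc'}\,e_\fc$ on this dense set. This gives all the assertions of the theorem.
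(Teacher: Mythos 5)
Your sketch is correct and takes essentially the same route as the paper, which proves Theorem~\ref{t:padic} by combining the integrality of the smoothed Shintani cocycle (Theorem~\ref{t:smooth}) with the cap-product/measure and Mellin-transform machinery of \cite{pcsd}, to whose \S3--5 it defers \emph{mutatis mutandis} for exactly the step you identify as the crux (gluing the smoothed classes at levels $\fn p^m$ into a bounded measure). One small caution: your claims that $e_\fc(s)$ is analytic as written (the Teichm\"uller factor $\omega(\fc)^k$ must first be absorbed into the measure-side twist) and that $L_{p,\fc}(1,1)\neq 0$ (so that a pole genuinely occurs at $s=1$) are not needed and the latter is not justified; the theorem only asserts ``at most a simple pole,'' which already follows from the simple zero of the smoothing factor at $s=1$.
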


Now consider the totally odd character $\chi=\psi\omega^{-1}$, and let $r_\chi$ denote the number of primes $\fp$ of $F$ above $p$ such that $\chi(\fp) = 1$.  
In \cite{gross}, Gross proposed the following:
\begin{iconj}[Gross] \label{c:gross} We have
 \[ \ord_{s=0} L_p(\psi, s) = r_\chi. \]
\end{iconj}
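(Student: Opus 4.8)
The plan is to prove the two inequalities $\ord_{s=0} L_p(\psi,s) \ge r_\chi$ and $\ord_{s=0} L_p(\psi,s) \le r_\chi$ by rather different means. The lower bound is the formal half and is within reach of the methods developed here: as explained for the classical case in (\ref{e:zetapairing}) and for the $p$-adic case in \cite{pcsd}, the function $L_p(\psi,s)$ is a cap product of the smoothed Shintani cocycle $\Phi_{\mathrm{Sh},\ell}$ --- reinterpreted with values in a space of $p$-adic measures on a product of local unit groups --- against a single $p$-adic homology class interpolating the $\fZ_{k,\ell}$. In this description the $r_\chi$ trivial zeros are exactly the orders of vanishing forced by the $r_\chi$ removed Euler factors $(1 - \chi(\fp))$ at the primes $\fp \mid p$ with $\chi(\fp) = 1$, and feeding this structure into Spiess's vanishing theorem for Eisenstein cocycles --- precisely as in \cite{pcsd} --- yields $\ord_{s=0} L_p(\psi,s) \ge r_\chi$.

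For the reverse inequality one must exhibit the first potentially nonzero Taylor coefficient, namely the $r_\chi$-th derivative at $s = 0$, and prove it nonzero. I would proceed in two stages. First, differentiate the cohomological formula $r_\chi$ times along the cyclotomic variable: at each trivial-zero prime $\fp$ this replaces the vanishing local Euler factor by a factor $\log_p$ of a local unit, turning the $r_\chi$-th derivative into the pairing of a ``derivative Eisenstein cocycle'' --- valued in an $r_\chi$-fold exterior power of a space of $\log_p$-valued distributions --- against the homology class, and class field theory then identifies this pairing with a determinant $\det\big( \log_p u_{i,j} \big)_{1 \le i, j \le r_\chi}$ of $p$-adic logarithms of Gross--Stark (Brumer--Stark) $\fp$-units $u_{i,j}$ attached to $\chi$. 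This identification of the leading term is the content of the (higher-rank) Gross--Stark conjecture, which by now is available through work of Dasgupta, Kakde, Ventullo and others and whose proof relies on the same $\ell$-smoothing and cohomological input used in this paper. Second, one must show that this regulator determinant does not vanish.

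The main obstacle is this last step: the nonvanishing of the $p$-adic regulator is a $p$-adic linear-independence (transcendence) statement about $\log_p$ of $\fp$-units and is beyond current reach when $r_\chi \ge 2$. When $r_\chi \le 1$ the conjecture is in fact already a theorem along these lines: the leading coefficient is then a single $\log_p u$ for a Brumer--Stark unit $u$ whose existence is now known, and since $u$ has nonzero valuation at a prime above $p$ it cannot be a root of unity, so $\log_p u \ne 0$ and $\ord_{s=0} L_p(\psi,s) = r_\chi$. For $r_\chi \ge 2$, a conceivable route is to avoid bare transcendence input by a rigidity argument --- propagating nonvanishing known at the level of $p$-adic $L$-values or of the Iwasawa Main Conjecture for totally real fields through the Gross--Kuz'min conjecture --- but it is precisely here that the difficulty concentrates, and the conjecture remains open in general.
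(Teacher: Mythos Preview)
The statement you are attempting to prove is labeled in the paper as a \emph{Conjecture} (Conjecture~\ref{c:gross}), not a theorem; the paper does not prove it and does not claim to. What the paper actually establishes is the one-sided inequality $\ord_{s=0} L_p(\psi,s) \ge r_\chi$ (Theorem~\ref{t:oov}), presented explicitly as a ``partial result towards Gross's conjecture.'' So there is no proof in the paper to compare your proposal against.

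Your proposal is an honest and well-informed sketch of how one might hope to approach the full conjecture. The first half---the lower bound via the smoothed Shintani cocycle and Spiess's vanishing theorem---is exactly the content of Theorem~\ref{t:oov}, and your description of it matches the paper's method. For the reverse inequality you correctly identify the strategy (express the $r_\chi$-th derivative as a $p$-adic regulator of Gross--Stark units) and the genuine obstruction (nonvanishing of that regulator for $r_\chi \ge 2$ is a $p$-adic transcendence statement that remains open). Your own final sentence acknowledges this: ``the conjecture remains open in general.'' That is accurate, and it means your proposal is not a proof but a programme, with the decisive step unavailable. The $r_\chi \le 1$ case you single out is indeed in better shape, though even there the argument you sketch relies on results (existence of Brumer--Stark units, the rank-one Gross--Stark conjecture) established only after the paper under discussion.
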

Combining our cohomological construction of the $p$-adic $L$-function with Spiess's formalism, one obtains the following partial result towards Gross's conjecture:
\begin{ithm} \label{t:oov}
We have
 \[ \ord_{s=0} L_{p}(\psi, s) \ge r_\chi. \]
\end{ithm}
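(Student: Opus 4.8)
The plan is to leverage the cohomological construction of $L_p(\psi,s)$ — which factors through the smoothed Shintani cocycle $\Phi_{\mathrm{Sh},\ell}$ and its specialization via cap product with the homology classes $\fZ_{k,\ell}$ — together with Spiess's general formalism relating orders of vanishing of $p$-adic $L$-functions to the existence of certain ``partial modular symbols'' or, in his language, to the nondegeneracy of a pairing on a suitable Eisenstein cohomology class. First I would recall from \cite{pcsd} (and the smoothing results restated in Theorem~\ref{t:smooth}) that the $p$-adic $L$-function $L_p(\psi,s)$ arises as a $p$-adic measure obtained by pairing the smoothed integral cocycle $\Phi_{\mathrm{Sh},\ell}$ against a canonical $(n-1)$-homology class built from the totally real field $F$ and the character $\psi$; crucially, this class, restricted to the unit group, is a \emph{Shintani cocycle} in the sense of \cite{spiess} by our Theorem~\ref{t:ddf}. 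The key point is that the construction is \emph{entirely parallel} to the one in \cite{pcsd} based on the Sczech cocycle, so the formal deduction of Theorem~\ref{t:oov} there transfers verbatim once one knows the two integral cocycles agree cohomologically — which is exactly the content of Theorem~\ref{t:compare}, after checking that the ``polar'' correction term and the explicit coboundary $h$ do not affect the relevant specializations (they are supported on degenerate configurations and contribute only lower-order, in fact vanishing, terms to the zeta-pairing $\langle\Phi_{\mathrm{Sh},\ell},\fZ_{k,\ell}\rangle$).

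The heart of the argument is Spiess's criterion. I would set up the situation as follows: write $V$ for the set of primes of $F$ above $p$, and for each prime $\fp\in V$ with $\chi(\fp)=1$, Spiess's machinery produces — from the vanishing of the local Euler factor and a cohomological ``derivative'' construction — an independent direction in which the measure $L_p(\psi,s)$ vanishes to first order at $s=0$. Concretely, one extends the $(n-1)$-cocycle $\Phi_{\mathrm{Sh},\ell}$ to a class on a larger group incorporating the $\fp$-adic local data, and the order of vanishing is bounded below by the rank of an explicit module of such extensions. The mechanism is that each such $\fp$ contributes a factor $(1-\chi(\fp)\N\fp^{-s})$-type zero that, under the interpolation $L_p(\psi,1-k) = L^*(\psi\omega^{-k},1-k)$ with the Euler factors at $p$ removed, manifests as a genuine zero of the $p$-adic function of the expected order. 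I would invoke \cite{sphmf} directly for this, phrasing our cocycle in the input format Spiess requires (an ``Eisenstein cocycle'' satisfying his axioms, which we verify using the cap-product formula (\ref{e:zetapairing}) and the integrality in Theorem~\ref{t:smooth}).

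The main obstacle I anticipate is the bookkeeping needed to match our normalizations with Spiess's: one must verify that the homology classes $\fZ_{k,\ell}$, which we define adelically in terms of ray classes mod $\ff$, correspond precisely to the modular-symbol-type classes Spiess uses, and that the smoothing at the auxiliary prime $\ell$ (needed for integrality) is compatible with — and can be removed after — the order-of-vanishing estimate, so that the final bound is on $L_p(\psi,s)$ itself and not merely on a smoothed variant. This is a technical but not conceptual difficulty; since \cite{pcsd} already carried out the analogous reconciliation for the Sczech cocycle, I would state Theorem~\ref{t:oov} as a formal consequence of Theorem~\ref{t:compare} combined with \cite[Theorem~...]{pcsd}, remarking that the reader may alternatively redo the Spiess argument ab initio with $\Phi_{\mathrm{Sh},\ell}$ in place of the smoothed Sczech cocycle, the two being interchangeable by our comparison result. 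I would close by noting that no new analytic input is required beyond what is already in \cite{sphmf} and \cite{pcsd}: the content here is purely that the Shintani-method cocycle is an adequate substitute, which is precisely what Sections~\ref{s:cocycle}--\ref{s:smooth} establish.
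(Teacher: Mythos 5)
Your fallback suggestion---rerun the Spiess/[CD] argument ab initio with $\Phi_{\mathrm{Sh},\ell}$---is in fact the paper's proof, and the paper's route never passes through Theorem~\ref{t:compare}: Sections~\ref{s:compare} and~\ref{s:smooth} are deliberately independent of each other. What the paper does is deduce Theorem~\ref{t:oov} from Theorem~\ref{t:smooth} (whose engine is the explicit Dedekind-sum formula of Theorem~\ref{t:lexplicit}), together with Theorem~\ref{t:psiszeta} and Theorem~\ref{t:ddf} (the latter showing that the unit-group specialization of $\Phi_{\mathrm{Sh}}$ is a Shintani cocycle in Spiess's sense), by repeating \cite[\S3--5]{pcsd} mutatis mutandis with the Shintani cocycle in place of the Sczech one. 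The analytic heart is not the ``each trivial Euler factor gives a first-order zero'' heuristic you describe: one shows that the $k$th derivative of the smoothed $p$-adic $L$-function at $s=0$ equals a cap product of a class derived from $\Phi_{\mathrm{Sh},\ell}$ with a homology class $\fZ_{\log^k}$, and then invokes Spiess's theorem \cite{sphmf} that $\fZ_{\log^k}$ vanishes for $k<r_\chi$. You should state the argument in that form rather than via local Euler factors.

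Your primary route, through Theorem~\ref{t:compare}, has a concrete gap as written. That theorem compares the \emph{unsmoothed} cocycles $\Psi_{\mathrm{Z}}$ and $\Psi_{\mathrm{Sh}}^+ + \Psi_{\mathrm{P}}$ in $H^{n-1}(\Gamma,\cF)$ with real power-series coefficients; it is not ``exactly the content'' of the claim that the two \emph{integral} (smoothed) cocycles on $\Gamma_\ell$ agree cohomologically. Adding the real coboundary $d\Psi(h)$ does not preserve integrality, and integrality of the smoothed cocycle is precisely what produces the $p$-adic measures in \cite{pcsd}; so the $p$-adic construction there does not transfer ``verbatim'' across the comparison without smoothing the coboundary, tracking denominators, and checking that both it and the polar class contribute nothing to the measure-theoretic specializations (the remark that $\Psi_{\mathrm{P}}$ dies under $\Delta^{(k)}$ handles classical values, not automatically the whole measure-level construction). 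One can shortcut all of this by noting that $L_p(\psi,s)$ is uniquely determined by its interpolation property, so the inequality proved in \cite{pcsd} applies to the same function---but then your proof establishes the literal statement only by citing the Sczech-based argument, whereas the point of Theorem~\ref{t:oov} in this paper is that the bound follows from the integral Shintani cocycle alone, via Theorem~\ref{t:smooth} and Spiess's vanishing theorem.
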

In the case $p > 2$, the result of Theorem~\ref{t:oov} was already known from Wiles' proof of the Iwasawa Main Conjecture \cite{Wi}.
Our method contrasts with that of Wiles in that it is purely analytic; we calculate the $k$th derivative of $L_{\fc, p}(\chi\omega, s)$ at $s=0$  and show that it equals the cap product 
of a cohomology class derived from $\Phi_{\mathrm{Sh},\ell}$ with a certain homology class denoted $\fZ_{\log^k}.$
 Spiess' theorem that the classes $\fZ_{\log^k}$ vanish
for $k < r_\chi$ then concludes the proof.
Our method  applies equally well when $p=2$.

Spiess proved Theorem~\ref{t:oov} as well using his formalism and his alternate construction of a Shintani cocycle \cite{spiess}.  
Note that our cocycle $\Phi_{\mathrm{Sh}}$ is ``universal" in the sense that it is defined on the group $\Gamma=\GL_n(\Q)$, whereas the cocycles defined by
Spiess are restricted to subgroups arising from unit groups in totally real number fields. (See Section~\ref{rs:specialize} below, where we describe
how our universal cocycle $\Phi_{\mathrm{Sh}}$ can be specialized to yield cocycles defined on unit groups.)

We should stress that while our proofs of Theorems~\ref{t:ks}, \ref{t:integral} and \ref{t:padic} are merely cohomological reformulations
of the works of Shintani \cite{Sh} and Cassou--Nogu\`es \cite{cn}, the proof of Theorem~\ref{t:oov}
relies essentially on the present cohomological construction and Spiess' theorems on cohomological $p$-adic $L$-functions.
In upcoming work we explore further the application of the cohomological method towards the leading terms of  $p$-adic
$L$-functions at $s=0$ and their relationship to Gross--Stark units \cite{ds}.

\bigskip

It is a pleasure to thank Pierre Colmez, Michael Spiess, and Glenn Stevens for helpful discussions and to acknowledge the influence of 
their papers \cite{colmezresidue}, \cite{spiess}, and \cite{st} on this work.  The first author  thanks 
 Alin Bostan and  Bruno Salvy  for many related discussions that stressed the importance  of power series methods. In March 2011, the second and third authors  gave a course at the Arizona Winter School that discussed Eisenstein cocycles.  The question of proving that the Shintani and Sczech cocycles are cohomologous was considered by the students in our group:  Jonathan Cass, Francesc Castella, Joel Dodge, Veronica Ertl, Brandon Levin, Rachel Newton, Ari Shnidman, and Ying Zhang.  A complete proof was given for the smoothed cocycles in the case $n=2$.  We would like to thank these students and the University of Arizona for an exciting week in which some of the ideas present in this work were fostered.

\section{The Shintani cocycle} \label{s:cocycle}

\subsection{Colmez perturbation} \label{s:qperturb}

Consider linearly independent vectors $v_1, \dotsc, v_n \in \R^m$.  The open cone generated by the $v_i$ 
is the set \[ C(v_1, \dotsc, v_n) = \R_{> 0} v_1 +
\R_{>0} v_2 + \cdots + \R_{>0} v_n. \]
We denote the characteristic function of this open cone by ${\mathbf 1}_{C(v_1, \dotsc, v_n)}$.  By convention, when $n=0$, we define
$C(\emptyset) = \{0 \}$.
Let $\cK_\R$ denote the abelian group of functions $\R^m \rightarrow \Z$ generated by the characteristic functions of such open cones. 

Fix now a subspace $V \subset \R^m$ spanned by arbitrary vectors $v_1, \dotsc, v_n \in \R^m$, and an auxiliary vector $Q \in \R^m$.
We define a function $c_Q(v_1, \dotsc, v_n) \in \cK_\R$ as follows.
If the $v_i$ are linearly dependent, then $c_Q(v_1, \dotsc, v_n) = 0$. 
If the $v_i$ are linearly independent, we impose the further condition that $Q \in V$ but that $Q$ is not in the $\R$-linear span of any subset of $n-1$ of the $v_i$. 
The function $c_Q(v_1, \dotsc, v_n)$ is defined to be the characteristic function of $C_Q(v_1, \dotsc, v_n)$, which is the disjoint union of the
 open cone $C(v_1, \dotsc, v_n)$
 and some of its boundary faces (of all dimensions, including 0).  A boundary face of the open cone $C$ is included in $C_Q$
 if translation of an element of the face by a small positive  multiple of $Q$ sends that element into the interior of $C$.  
  Formally,  we have:
\begin{equation} \label{e:cqdef}
 c_Q(v_1, \dotsc, v_n)(w) = \begin{cases}
\lim_{\epsilon \rightarrow 0^+}  {\mathbf 1}_{C(v_1, \dotsc, v_n)}(w + \epsilon Q) & \text{if  the } v_i \text{ are linearly independent,} \\
 0 & \text{otherwise.} \end{cases}
 \end{equation}
The limit in (\ref{e:cqdef}) is easily seen to exist and is given explicitly as follows.  If $w \not\in V$, then $c_Q(v_1, \dotsc, v_n)(w) = 0$.
On the other hand if
\[ w = \sum_{i=1}^{n} w_i v_i, \qquad Q = \sum_{i=1}^{n} q_i v_i  \ \ \text{ (all } q_i \neq 0), \]
then 
\begin{equation} \label{e:cqinterp}
 c_Q(v_1, \dotsc, v_n)(w) = \begin{cases} 1 & \text{if } w_i \ge 0 \text{ and } w_i = 0 \Rightarrow q_i > 0 \text{ for } i =1, \dotsc, n, \\
0 & \text{otherwise}.
\end{cases}
\end{equation}

Let us give one more characterization of this ``$Q$-perturbation process" that will be useful for future calculations.
For simplicity we suppose $m=n$ and that the vectors $v_i$ are linearly independent.
We denote by $\sigma$ the $n \times n$ matrix whose columns are the vectors $v_i.$
For each subset $I \subset \{1, \dotsc, n\}$, we have the open cone $C_I = C(v_i : i \in I)$.
The weight of this cone (equal to 0 or 1) in the disjoint union $C_Q$
is given as follows.
Let $d = |I|$.
 The $d$-dimensional subspace
 containing the cone $C_I$ can be expressed  as the intersection of the $n-d$ codimension 1 hyperplanes
 determined by $v_i^* = 0$, for $i \in \overline{I} = \{1, \dotsc, n\} - I$.  Here $\{v_i^*\}$ is the dual basis to the $v_i$. Under the usual
 inner product on $\R^n$, the $v_i^*$ are  the columns of the matrix $\sigma^{-t}$. 
 Each hyperplane $v_i^*=0$ divides its complement into a plus part and minus part, namely the half-space containing the cone $C(v_1, \dotsc, v_n)$ 
and the half-space not containing the cone (as an inequality, $\langle w, v_i^*\rangle > 0$ or $<0$).
The weight of $C_I$ is equal to 1 if $Q$ lies in the totally positive region defined by these hyperplanes, i.e. if
$\langle Q, v_i^*\rangle > 0$ for all $i\in \overline{I}.$  Otherwise, the weight of $C_I$ is $0$.
In summary,  \begin{equation} \label{e:weight}
\text{weight}(C_I) =  \prod_{i \in \overline{I}} \frac{1+ \sign(Q\sigma^{-t})_i }{2}. \end{equation}
  Note that this formula is valid for $d=n$ as well, with the standard convention that empty products are equal to 1.

\subsection{Cocycle relation}

We now derive a cocycle relation satisfied by the functions $c_Q$.  Let $v_1, \dotsc, v_n \in \R^m$ be linearly
independent vectors, with $n \ge 1$.
A set of the form
\begin{equation} \label{e:ldef}
 L = \R v_1 + \R_{>0} v_2 + \cdots + \R_{>0} v_n\end{equation}
is called a  {\em wedge}.  
The characteristic function ${\mathbf 1}_L$ of $L$ is an element of  $\cK_\R$ since 
\[ L = C(v_1, \dotsc, v_n) \sqcup C(v_2, \dotsc, v_n) \sqcup
C(-v_1, v_2, \dotsc, v_n). \]
Let  $\cL_\R = \cL_\R(\R^m) \subset \cK_\R$ be the subgroup
generated by the functions ${\mathbf 1}_L$ for all wedges $L$.

\begin{theorem}  \label{t:cqcocycle}
Let $n \ge 1$, and let $v_0, \dotsc, v_n \in \R^m$ be nonzero vectors spanning a subspace $V$ of dimension at most $n$.
 Let $Q \in V$ be a vector not contained in the span of any subset of $n-1$ of the $v_i$.  Let $B$ denote a fixed ordered basis of $V$
 and define for each $i$ the orientation 
 \[ O_B(\hat{v}_i) := O_B(v_0, \dotsc, \hat{v}_i, \dotsc, v_n) = \sign \det(v_0, \dotsc, \hat{v}_i, \dotsc, v_n) \in \{0, \pm 1\}, \] 
where the written  matrix gives the representation of the vectors $v_j$ in terms of the basis $B$, for $j \neq i$.
Then \begin{equation} \label{e:cocycleL}
 \sum_{i=0}^{n} (-1)^i O_B(\hat{v}_i)c_Q(v_0, \dotsc, \hat{v}_i, \dotsc, v_n) \equiv 0 \pmod{\cL_\R}. \end{equation}
Furthermore, if each $v_i$ lies in the totally positive orthant $(\R_{>0})^m$, then in fact
 \[ \sum_{i=0}^{n} (-1)^i O_B(\hat{v}_i) c_Q(v_0, \dotsc, \hat{v}_i, \dotsc, v_n) = 0. \]
\end{theorem}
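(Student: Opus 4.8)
The plan is to reduce the identity to a statement about characteristic functions of cones in the single subspace $V$, and then to prove it by a direct combinatorial computation using the explicit weight formula \eqref{e:weight}. First I would dispose of the degenerate case: if $v_0,\dotsc,v_n$ span a subspace of dimension $<n$, then every $(n)$-subset is linearly dependent, so every $c_Q$ term vanishes and the identity is trivial. So I may assume $\dim V = n$, fix the ordered basis $B$ of $V$, and identify $V \cong \R^n$ via $B$; write $\sigma_i$ for the column vector of $v_i$ in this basis, and for each $i$ let $\sigma^{(i)}$ be the matrix with columns $\sigma_0,\dotsc,\widehat{\sigma_i},\dotsc,\sigma_n$, so that $O_B(\hat v_i) = \sign\det\sigma^{(i)}$.

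Next I would make the cocycle relation local: evaluate the left-hand side of \eqref{e:cocycleL} at an arbitrary point $w \in \R^n$ and show the resulting integer is $0$ provided $w$ avoids a certain hyperplane arrangement (the hyperplanes spanned by $(n-1)$-subsets of the $v_i$ together with $Q$), and that off a measure-zero set this is equivalent to lying in $\cL_\R$ — indeed a $\Z$-linear combination of cone characteristic functions lies in $\cL_\R$ iff it vanishes on the complement of the union of the relevant hyperplanes, which is a standard fact I would either cite or prove by a short support argument. For such generic $w$, by \eqref{e:cqinterp} the value of $c_Q(v_0,\dotsc,\hat v_i,\dotsc,v_n)(w)$ is $1$ precisely when $w$ lies in the \emph{closed} cone on $\{\sigma_j : j\neq i\}$ and the ``boundary tie-break'' goes the right way, which by \eqref{e:weight} is governed by the signs of the barycentric coordinates of $w$ and of $Q$ with respect to the basis $\{\sigma_j\}_{j\neq i}$. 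Writing everything in terms of the $(n+1)$ real numbers obtained by expressing $w$ (resp.\ $Q$) as an affine/linear combination involving all of $\sigma_0,\dotsc,\sigma_n$ — equivalently, the minors $\det(\sigma_0,\dotsc,w,\dotsc,\sigma_n)$ — the alternating sum $\sum_i (-1)^i O_B(\hat v_i) c_Q(\dotsc)(w)$ becomes a purely combinatorial expression in the sign pattern of these $n+1$ numbers for $w$ and the $n+1$ numbers for $Q$. The identity to check is then: for any two ``generic'' sign vectors in $\{\pm1\}^{n+1}$ (arising from a genuine point and a genuine $Q$ in $V$), this alternating sum is zero. This is a finite, dimension-independent combinatorial lemma, which I would prove by the telescoping/pairing argument that drives every Shintani-type cocycle relation: group the $n+1$ terms according to where $w$ and $Q$ ``fall'' relative to the simplex with vertex directions $\sigma_i$, and match each surviving term with a cancelling partner obtained by moving the omitted index by one.

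For the second, sharper assertion — that when all $v_i \in (\R_{>0})^m$ the alternating sum is \emph{honestly} zero, not merely zero mod $\cL_\R$ — I would argue that the positivity hypothesis forces each relevant cone and wedge to meet the positive orthant, and more precisely that the only way the mod-$\cL_\R$ computation above can fail to be an on-the-nose identity is through contributions supported on the ``bad'' hyperplanes, i.e.\ on points $w$ lying in the linear span of some $(n-1)$-subset; but for $v_i$ in the open positive orthant one checks that the full cone decomposition $\R^n = \bigsqcup_i (\text{sign})\cdot C(\sigma_0,\dotsc,\widehat{\sigma_i},\dotsc,\sigma_n)$ partitions $V$ \emph{including} the shared boundary faces in a way compatible with the $Q$-perturbation, because $Q$ itself can be chosen (and the statement is $Q$-independent) so that the boundary tie-breaks exactly tile each face once. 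Concretely I would show: for $w$ on a bad hyperplane, write $w$ as a positive combination of a proper subset of the $\sigma_i$ (using positivity to control signs), and then verify term-by-term in \eqref{e:cqinterp} that the boundary-inclusion conditions for the two adjacent cones in the decomposition are complementary, so their contributions to the alternating sum cancel in pairs.

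The main obstacle I anticipate is the bookkeeping in the second step: correctly translating the geometric boundary-inclusion rule \eqref{e:cqinterp}/\eqref{e:weight} into the right combinatorial statement about sign vectors, and then finding the clean cancellation/telescoping that makes the alternating sum collapse uniformly in $n$ — in particular handling the lower-dimensional boundary faces (where several barycentric coordinates vanish simultaneously and the tie-break rule iterates) without an induction that spirals. I expect the cleanest route is to prove \eqref{e:cocycleL} first as an identity of functions on the open dense set where all barycentric coordinates of $w$ are nonzero (pure cone combinatorics, no boundary subtlety), deduce the mod-$\cL_\R$ statement, and only then run the boundary analysis for the positive-orthant refinement; trying to do both at once is where the argument is most likely to get bogged down.
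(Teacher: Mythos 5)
Your reduction of the first assertion to a generic pointwise computation rests on a claimed equivalence that is false, and this is a genuine gap rather than a fixable detail. You assert that a $\Z$-linear combination of characteristic functions of cones lies in $\cL_\R$ if and only if it vanishes off the union of the relevant hyperplanes. The direction you actually need (generic vanishing $\Rightarrow$ membership in $\cL_\R$) fails: for instance, in $\R^2$ the characteristic function of a single open ray $C(v_1)$ vanishes off the line $\R v_1$, yet it is not in $\cL_\R$ — wedges all contain a full line, and (in the rational case) the Solomon--Hu generating function kills $\cL$ but is nonzero on a ray. This is not a peripheral issue: the whole content of Theorem~\ref{t:cqcocycle} is the boundary-face selection rule built into $c_Q$. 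If you replaced $c_Q$ by the indicator of the open cone alone, your generic computation would be unchanged, but the congruence \eqref{e:cocycleL} would fail, because the discrepancy is supported on boundary faces and indicators of pointed lower-dimensional cones do not lie in $\cL_\R$. So the boundary analysis cannot be deferred to the positive-orthant refinement; it is needed already for the first assertion. A smaller but real error in the same step: the alternating sum is not $0$ at generic $w$ in general. It is the constant $d(v_0,\dotsc,v_n)\in\{0,\pm1\}$, which equals $\pm1$ exactly when the coefficients $\lambda_i$ of a relation $\sum_i\lambda_i v_i=0$ all have the same sign (already visible for $n=m=1$, $v_0=1$, $v_1=-1$). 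This is harmless for the congruence, since $\mathbf{1}_V\in\cL_\R$, but it shows the pointwise claim as stated is wrong, and the vanishing of this constant under the positivity hypothesis is precisely what drives the second assertion. Finally, your ``two generic sign vectors'' lemma implicitly assumes the $v_i$ are in general position; the case $\dim V=n$ with some proper $n$-subsets of the $v_i$ dependent is not covered by your plan.

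For comparison, the paper's proof is an induction on $n$ with two cases. In the general-position case it evaluates the sum at $w$ by passing to $w+\epsilon Q$, which by the hypothesis on $Q$ is in general position with the $v_i$; Hill's Proposition~2 then shows the sum is \emph{literally constant} on $V$, equal to $d(v_0,\dotsc,v_n)\cdot\mathbf{1}_V$ — so the boundary contributions are absorbed automatically and no separate boundary bookkeeping is needed — and $\mathbf{1}_V\in\cL_\R$, with $d=0$ when all $v_i$ lie in the positive orthant. In the degenerate case it projects along $v_n$ onto the span of $v_0,\dotsc,v_{n-1}$, proves a product formula $c_Q=c_{\pi'(Q)}\cdot g_Q$, and invokes the inductive hypothesis. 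If you want to salvage your approach, the correct statement to prove pointwise is not vanishing off hyperplanes but that the sum is constant on all of $V$ (including the boundary strata, using the $Q$-tie-break \eqref{e:cqinterp} there), which is essentially the route the paper takes via the $\epsilon Q$ shift.
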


\begin{proof}  We prove the result by induction on $n$.  For the base case $n=1$, the argument for the ``general position" case below gives the desired result; alternatively one can check the result in this case by hand.

For the inductive step, note first that the result is trivially true by the definition of $c_Q$ unless $\dim V = n$.
We therefore suppose this holds and consider two cases. 

\medskip

\underline{Case 1:}  The $v_i$ are in general position in $V$, i.e.\ any subset of $\{v_0, \dotsc, v_n\}$ of size $n$ spans $V$.  For any $w \in V$, it then follows from our assumption on $Q$ 
that for $\epsilon > 0$ small enough, the set $\{ v_0, \dotsc, v_n, w + \epsilon Q\}$ is in general position in $V$.  In view of the definition
of $c_Q$ given in (\ref{e:cqdef}), Proposition 2 of \cite{hill}
therefore implies that the left side of (\ref{e:cocycleL}) is a constant function on $V$ 
taking the value $d(v_0, \dotsc, v_n)$ defined as follows.
Let $\lambda_i$ for $i = 0, \dotsc, n$ be nonzero constants such that $\sum_{i=0}^{n} \lambda_i v_i = 0$.  The $\lambda_i$ are well-defined up to a simultaneous scalar multiplication.  Then
\begin{equation} \label{e:dhilldef}
d(v_0, \dotsc, v_n) = \begin{cases}
(-1)^i O_B(\hat{v}_i) & \text{if the } \lambda_i \text{ all have the same sign,}\\
0 & \text{otherwise.}
\end{cases} 
\end{equation}
One readily checks that right side of (\ref{e:dhilldef}) is independent of $i$.  Now,  the characteristic function of $V$ lies in $\cL_\R$, giving the desired result. 
Furthermore, if the $v_i$ lie in the totally positive orthant $(\R_{>0})^m$,
then the $\lambda_i$ cannot all have the same sign and hence
$d(v_0, \dotsc, v_n) = 0$.  This completes the proof in the case where the $v_i$ are in general position.

\medskip
\underline{Case 2:}  The $v_i$ are not in general position.  Without loss of generality, assume that $v_0, \dotsc, v_{n-1}$ are linearly dependent. 
Let $V'$ denote the $(n-1)$-dimensional space spanned by these $n$ vectors.
Denote by $\pi'\colon V \rightarrow V'$ and $\pi\colon V \rightarrow \R$
 the projections according to the direct sum decomposition
$V = V' \oplus \R v_n$.
We claim that for  $i = 0, \dotsc, n-1$ and $w \in V$, we have
\begin{equation} \label{e:cqclaim}
 c_Q(v_0, \dotsc, \hat{v}_i, \dotsc, v_n)(w) = c_{\pi'(Q)}(v_0, \dotsc, \hat{v}_i, \dotsc, v_{n-1})(\pi'(w)) \cdot g_Q(w), 
 \end{equation}
 where 
 \[ g_Q(w) = \begin{cases} 1 & \text{if } \pi(w) \ge 0 \text{ and } \pi(w) = 0 \Rightarrow \pi(Q) > 0, \\
 0 & \text{otherwise}.
 \end{cases} \]
 First note that if $v_0, \dotsc, \hat{v}_i, \dotsc, v_n$ are linearly dependent, then under our conditions we necessarily have that 
$v_0, \dotsc, \hat{v}_i, \dotsc, v_{n-1}$ are linearly dependent, and both sides of (\ref{e:cqclaim}) are zero.

Therefore suppose that the vectors $v_0, \dotsc, \hat{v}_i, \dotsc, v_n$ are linearly independent, in which case
$v_0, \dotsc, \hat{v}_i, \dotsc, v_{n-1}$ are clearly linearly independent as well, and hence span $V'$.  Furthermore $\pi'(Q) \in V'$ satisfies the condition that it is not contained in the span of any subset of $n-2$ of these vectors, or else $Q$ would lie in the span of $n-1$ of the original vectors $v_0, \dotsc, v_n$; hence the right side of (\ref{e:cqclaim}) is well-defined.  Equation (\ref{e:cqclaim}) now follows directly from the interpretation of the function $c_Q$ given in (\ref{e:cqinterp}).

To deal with the orientations note that if $B'$ is any other basis of $V$, then \begin{equation}
\label{e:obbp}
 O_B(\hat{v}_i) = O_B(B') \cdot O_{B'}(\hat{v}_i). \end{equation}
We therefore choose for convenience a basis $B'$ for $V$ whose last element is the vector $v_n$.

Using (\ref{e:cqclaim}) and (\ref{e:obbp}) and the fact that $c_Q(v_0, \dots, v_{n-1}) = 0$ since
$v_0, \dots, v_{n-1}$ are linearly dependent, we calculate
\begin{align*}
\sum_{i=0}^{n} (-1)^i O_{B}(\hat{v}_i)c_Q(v_0, \dotsc, \hat{v}_i, \dotsc, v_n)(w) = & \
O_B(B')  \sum_{i=0}^{n-1} (-1)^i O_{B'}(\hat{v}_i)c_Q(v_0, \dotsc, \hat{v}_i, \dotsc, v_n)(w) \\
 = & \ O_B(B') \ell_Q(w) g_Q(w),
\end{align*}
where
\[ \ell_Q(w) =   \sum_{i=0}^{n-1} (-1)^i O_{B'}(\hat{v}_i)c_{\pi'(Q)}(v_0, \dotsc, \hat{v}_i, \dotsc, v_{n-1})(\pi'(w)). \]
Now if we let $B''$ be the basis of $V'$ given by the image of the first $n-1$ elements of $B'$ under $\pi'$, it is clear that
\[ O_{B'}(\hat{v}_i) = O_{B''}(v_0, \dotsc, \hat{v}_i, \dotsc, v_{n-1}).\]
Therefore the function $\ell_Q$ can be written
\[ \ell_Q(w)=  \sum_{i=0}^{n-1} (-1)^i O_{B''}(v_0, \dotsc, \hat{v}_i, \dotsc, v_{n-1})c_{\pi'(Q)}(v_0, \dotsc, \hat{v}_i, \dotsc, v_{n-1})(\pi'(w)). \]
This is the exact form for which we can use the inductive hypothesis to conclude that $\ell_Q \in \cL_\R(V')$ and $\ell_Q = 0$ if each $v_i$ lies in the totally positive orthant.  It is readily checked that this implies that $\ell_Q g_Q \in \cL_\R(V)$ as desired (and $\ell_Q g_Q = 0$ if each $v_i$ lies in the totally positive orthant).
\end{proof}

  \subsection{Signed fundamental domains} \label{s:sfd}

In this section we  show that Theorem~\ref{t:cqcocycle}
can be combined with a result of Colmez to deduce a theorem of Diaz y Diaz and Friedman on the existence of signed Shintani domains.  We use this result in the proof of Theorem~\ref{t:psiszeta} in order to relate our cocycle to the special values of partial zeta functions.

Consider the totally positive orthant $(\R_{>0})^n \subset \R^n$, which forms a group under the operation $*$ of componentwise multiplication.
Let $D = \{ x \in  (\R_{>0})^n: x_1x_2 \cdots x_n = 1 \}$.  Let $U \subset D$ denote a subgroup that is discrete and free of
rank $n-1$.  The goal of this section is to determine an explicit fundamental domain for the action of $U$ on the totally positive orthant
in terms of an ordered basis $\{u_1, \dotsc, u_{n-1}\}$ for $U$.

Define the orientation
\begin{equation} \label{e:wedef}
 w_u := \sign\det( \log(u_{ij}))_{i,j = 1}^{n-1}) = \pm 1, 
 \end{equation}
where $u_{ij}$ denotes the $j$th coordinate of  $u_i$.
 For each permutation $\sigma \in S_{n-1}$ let
\[ v_{i, \sigma} = u_{\sigma(1)} \cdots u_{\sigma(i-1)} \in U, \qquad i = 1, \dotsc, n \]
(so by convention $v_{1, \sigma} = (1, 1, \dotsc, 1)$ for all $\sigma$).
Define
\[ w_\sigma = (-1)^{n-1} w_u \sign(\sigma) \sign(\det(v_{i,\sigma})_{i=1}^{n}) \in \{0, \pm 1\}. \]

We choose for our perturbation vector the coordinate basis vector
 $e_n = (0, 0, \dotsc, 0, 1)$, and assume that $e_n$ satisfies the property that it
 does not lie in the $\R$-linear span of any $(n-1)$ of elements of $U$. 
Note that the action of $U$ preserves the ray  $\R_{>0}e_n.$

\begin{theorem}[Colmez, \cite{colmezresidue}, Lemme 2.2]   \label{t:colmez}
If $w_\sigma = 1$ for all $\sigma \in S_{n-1}$, then
\begin{equation} \label{e:colmez}
\bigsqcup_{\sigma \in S_{n-1}} C_{e_n}(v_{1, \sigma}, \dotsc, v_{n, \sigma}) 
\end{equation}
is a fundamental domain for the action of $U$ on the totally positive orthant $(\R_{>0})^n$.  In other words, we have
\[ \sum_{u \in U} \sum_{\sigma \in S_{n-1}} c_{e_n}(v_{1, \sigma}, \dotsc, v_{n, \sigma})(u * x) = 1 \]
for all $x \in (\R_{>0})^n$.
\end{theorem}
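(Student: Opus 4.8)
The plan is to reduce Colmez's statement (Theorem~\ref{t:colmez}) to a telescoping argument built on the cocycle relation of Theorem~\ref{t:cqcocycle}, combined with a direct geometric inspection in a single ``chamber'' of the hyperplane arrangement cut out by the logarithms of the $u_i$. First I would transport the entire problem from the multiplicative group $(\R_{>0})^n$ to the additive space $\R^n$ via the coordinatewise logarithm $\mathbf{L}\colon (\R_{>0})^n \to \R^n$; this carries the $*$-action of $U$ to the translation action of the rank-$(n-1)$ lattice $\Lambda = \mathbf{L}(U)$ on the hyperplane $H = \{\sum x_i = 0\}$, carries each cone $C(v_{1,\sigma},\dots,v_{n,\sigma})$ to the cone $C(\mathbf{L}(v_{1,\sigma}),\dots,\mathbf{L}(v_{n,\sigma}))$ in $\R^n$, and carries the perturbation vector $e_n$ to $\mathbf{L}$ of nothing in particular --- but since $c_Q$ depends on $Q$ only through the linear-algebraic data in \eqref{e:cqinterp}, and the relevant configuration stays the same under $\mathbf{L}$, I can work with the $c_Q$ functions formally on $\R^n$. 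The identity to be proved, $\sum_{u\in U}\sum_\sigma c_{e_n}(v_{1,\sigma},\dots,v_{n,\sigma})(u*x) = 1$ for all $x$ in the orthant, becomes the statement that the translates under $\Lambda$ of the signed union $\bigsqcup_\sigma C(\cdots)$ tile $\R^n$ (the extra $\R_{>0}e_n$-direction transverse to $H$ being handled by the $g_Q$-type factor exactly as in Case~2 of the proof of Theorem~\ref{t:cqcocycle}).

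The core combinatorial step is the following. For a fixed $x$, I want to show $F(x) := \sum_{u\in U}\sum_\sigma c_{e_n}(v_{1,\sigma},\dots,v_{n,\sigma})(u*x)$ is identically $1$. The strategy is: (i) show $F$ is locally constant on the complement of a measure-zero set, hence by connectedness of the relevant region it suffices to compute $F$ at one convenient point; (ii) compute $F$ at a point deep inside the cone $C(u_1, u_1 u_2, \dots, u_1\cdots u_{n-1}, \text{(extra ray)})$ --- i.e. a ``generic'' point of the identity chamber of the fan generated by the partial products --- where exactly one term in the double sum is nonzero and equals $+1$ by the hypothesis $w_\sigma = 1$. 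Step (i) is where Theorem~\ref{t:cqcocycle} enters: applying the cocycle relation repeatedly (once for each of the $n-1$ basis vectors $u_j$, in the spirit of the ``telescoping'' that produces Shintani-type domains) shows that $\sum_\sigma c_{e_n}(v_{1,\sigma},\dots,v_{n,\sigma})$, when summed against all $U$-translates, differs from the characteristic function of the orthant only by terms supported on boundary hyperplanes (elements of $\cL_\R$), and such terms contribute $0$ to the pointwise sum at a generic $x$. More precisely: I expect to prove the stronger pointwise statement by induction on $n$, exactly paralleling the two-case structure (general position / degenerate) in the proof of Theorem~\ref{t:cqcocycle}, with the inductive hypothesis being Colmez's theorem in dimension $n-1$ for the sublattice generated by $u_1,\dots,u_{n-2}$ and the ``extra coordinate'' playing the role of the projection $\pi$.

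**Main obstacle.** The hard part will be bookkeeping the orientation signs so that the telescoping actually collapses: the definition of $w_\sigma$ folds together $w_u$ (orientation of the regulator matrix), $\sign(\sigma)$, and $\sign\det(v_{i,\sigma})$, and one must check that under the cocycle relation \eqref{e:cocycleL} --- which carries its own orientation factors $O_B(\hat v_i)$ --- these signs combine to give a coherent $+1$ across all chambers, using the hypothesis $w_\sigma = 1$ precisely to rule out sign cancellations that would otherwise make $F$ equal to something like $0$ or $-1$ on part of the orthant. A second, more technical, nuisance is the hypothesis that $e_n$ lies in no $\R$-span of $n-1$ elements of $U$: I would need to check this is exactly the genericity condition ensuring every $c_{e_n}$ appearing is well-defined (no $q_i$ vanishes in \eqref{e:cqinterp}), and that the finitely many ``bad'' $x$ where $F$ could jump form a measure-zero set not affecting the final pointwise identity --- or, better, absorb this into the boundary-face prescription so the identity holds at every $x$ including boundary points, which is what the $c_Q$ formalism with its built-in perturbation is designed to deliver. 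Since the statement is quoted as a known lemma of Colmez, I would in fact present this as a derivation of Colmez's result from Theorem~\ref{t:cqcocycle}, emphasizing that the perturbation vector $Q = e_n$ in our cocycle is literally Colmez's perturbation, so the content is checking the sign normalization and the single base-chamber computation.
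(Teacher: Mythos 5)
First, note where this statement sits in the paper: Theorem~\ref{t:colmez} is quoted from Colmez (\cite{colmezresidue}, Lemme 2.2) and is \emph{not} proved in the paper; it is used as the external geometric input which, combined with the cocycle property of Theorem~\ref{t:cqcocycle} and a restriction/corestriction argument, yields the signed-domain theorem of Diaz y Diaz--Friedman (Theorem~\ref{t:ddf}). So your proposal is an attempt at an independent proof, and it should be judged on its own merits.

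As written it has a genuine gap, located exactly at your step (i). Local constancy of $F(x)=\sum_{u\in U}\sum_\sigma c_{e_n}(v_{1,\sigma},\dotsc,v_{n,\sigma})(u*x)$ on the chambers of the (locally finite) arrangement of cone boundaries is immediate, but the orthant minus those walls is highly disconnected, so evaluating $F$ at one deep point proves nothing unless you establish wall-crossing invariance across every internal facet of every translate $u*C_\sigma$ -- and that invariance \emph{is} Colmez's lemma, not a formal consequence of Theorem~\ref{t:cqcocycle}. Your mechanism for it fails on two counts. First, you treat elements of $\cL_\R$ as ``supported on boundary hyperplanes'' and hence negligible at generic $x$; but wedges $\R v_1+\R_{>0}v_2+\cdots+\R_{>0}v_r$ are full-dimensional sets, so $\cL_\R$-terms do not vanish generically (the paper's positive-orthant case of Theorem~\ref{t:cqcocycle} gives exact vanishing, not an $\cL_\R$-discrepancy, precisely because the discrepancy would otherwise be uncontrollable pointwise). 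Second, the cocycle relation is a purely configurational identity, valid for arbitrary vectors, whereas the tiling statement uses the arithmetic of $U$ in an essential way: discreteness and cocompactness of $\log U$ in the trace-zero hyperplane are what force the translated cones to sweep out the whole orthant (already for $n=2$ one needs that the rays through $u^k$ accumulate on the coordinate axes), and the hypothesis $w_\sigma=1$ is what rules out overlaps near shared facets. No finite collection of instances of (\ref{e:cocycleL}) encodes this covering/accumulation phenomenon, and indeed the authors, who have Theorem~\ref{t:cqcocycle} in hand, still must invoke Colmez's Lemme 2.1 and 2.2 to get Theorem~\ref{t:ddf}; if the cocycle relation plus a one-point evaluation sufficed, the citation would be unnecessary. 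A correct proof along geometric lines (Colmez's induction on $n$ analyzing how the facets of $\bigcup_\sigma \overline{C}(v_{1,\sigma},\dotsc,v_{n,\sigma})$ are matched in pairs by the units $u_i$, or the topological-degree argument of \cite{ddf}) must supply exactly this boundary-matching/covering input, which your outline currently replaces by an appeal to $\cL_\R$ that does not do the work.
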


\begin{remark} \label{r:posorthant}
Note that each of the vectors $v_{i,\sigma}$ lies in the positive orthant, so
each open cone $C(v_{i_1, \sigma}, \dotsc, v_{i_r, \sigma})$ is contained in the positive orthant when $r \ge 1$.  Furthermore, 
$e_n$ lies along a coordinate axis and is not contained in $ C(v_{1, \sigma}, \dotsc, v_{n, \sigma}) $, hence $0 \not \in
 C_{e_n}(v_{1, \sigma}, \dotsc, v_{n, \sigma}) $.  Therefore $C_Q(v_{1, \sigma}, \dotsc, v_{n, \sigma}) \subset (\R_{>0})^n$.
\end{remark}

The following generalization was recently proved by Diaz y Diaz and Friedman using topological degree theory.  We will 
show that the cocycle property of $c_Q$ proved in Theorem~\ref{t:cqcocycle} allows one to deduce their theorem from the earlier result of Colmez.
Note that our proof of the theorem relies upon Colmez's theorem, whereas the proof of Diaz y Diaz and Friedman recovers it.

\begin{definition} \label{d:sfd}
A {\em signed fundamental domain}
for the action of $U$ on $(\R_{>0})^n$ is by definition a formal linear combination $D = \sum_i a_i C_i$ of
open cones with $a_i \in \Z$ such that $\sum_{u \in U} \sum_i a_i {\mathbf 1}_{C_i}(u *x) = 1$ 
for all $x \in (\R_{>0})^n$.  We call  ${\mathbf 1}_D := \sum_i a_i {\mathbf 1}_{C_i} \in \cK_\R$ the characteristic function of $D$.
\end{definition}

Note that when each $a_i = 1$ and the $C_i$ are disjoint, the set $\sqcup_{i} C_i$ is a fundamental domain in the usual sense.

\begin{theorem}[Diaz y Diaz--Friedman, \cite{ddf}, Theorem 1]   \label{t:ddf}
The formal linear combination
\[ \sum_{\sigma \in S_{n-1}} w_\sigma C_{e_n}(v_{1, \sigma}, \dotsc, v_{n, \sigma})\]
is a signed fundamental domain for the action of $U$ on $(\R_{>0})^n$, i.e.\ 
\begin{equation} \label{e:diaz}
 \sum_{u \in U} \sum_{\sigma \in S_{n-1}}  w_\sigma c_{e_n}(v_{1, \sigma}, \dotsc, v_{n, \sigma})(u * x) = 1 
 \end{equation}
for all $x \in (\R_{>0})^n$.
\end{theorem}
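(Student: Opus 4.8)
The plan is to recognize the left side of~(\ref{e:diaz}) as the cap product of a group cohomology class of $U$ with the fundamental class of $U\cong\Z^{n-1}$, and then to exploit the basis-independence of that cap product in order to reduce to the situation $w_\sigma\equiv 1$ already treated by Colmez in Theorem~\ref{t:colmez}. First I would make the $U$-equivariance of $c_{e_n}$ explicit: viewing $u\in U$ as the diagonal matrix $\diag(u)$, one has $C(u*v_1,\dots,u*v_n)=\diag(u)\cdot C(v_1,\dots,v_n)$, and since $\diag(u)^{-1}e_n$ is a positive multiple of $e_n$ while $c_Q$ depends on $Q$ only up to $\R_{>0}$-scaling, it follows that $c_{e_n}(u*v_1,\dots,u*v_n)(u*w)=c_{e_n}(v_1,\dots,v_n)(w)$. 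Setting
\[ F(g_0,\dots,g_{n-1}):=\sign\det(g_0,\dots,g_{n-1})\cdot c_{e_n}(g_0,\dots,g_{n-1}) \]
for $g_i\in U$ regarded as vectors in $(\R_{>0})^n$, the assignment $F$ is then a $U$-equivariant homogeneous $(n-1)$-cochain valued in $\cK_\R$ (a $U$-module via $*$). Crucially, the \emph{second} assertion of Theorem~\ref{t:cqcocycle}, applied to $g_0,\dots,g_n\in U\subset(\R_{>0})^n$ (whose span, if $n$-dimensional, is all of $\R^n$ and hence contains $e_n$, and otherwise forces every term to vanish), gives $\sum_{i=0}^n(-1)^iF(g_0,\dots,\widehat{g_i},\dots,g_n)=0$ \emph{exactly}, not merely modulo $\cL_\R$; so $F$ is an honest group cocycle. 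The standing hypothesis that $e_n$ lies in the span of no $n-1$ elements of $U$ is exactly what guarantees all the relevant $c_{e_n}$ are well defined.

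Next I would rewrite the left side of~(\ref{e:diaz}). Using the equivariance above with group element $u^{-1}$ to move $u$ off of $x$, unwinding the definition of $w_\sigma$, and using $\sign\det(\diag(u)M)=\sign\det M$ for $u\in(\R_{>0})^n$, one obtains
\[ \sum_{u\in U}\sum_{\sigma\in S_{n-1}} w_\sigma\, c_{e_n}(v_{1,\sigma},\dots,v_{n,\sigma})(u*x) = (-1)^{n-1}w_u \sum_{u\in U}\sum_{\sigma\in S_{n-1}} \sign(\sigma)\, F\bigl(u,\ uu_{\sigma(1)},\ \dots,\ u\,u_{\sigma(1)}\cdots u_{\sigma(n-1)}\bigr)(x). \]
The inner double sum is the value at $x$ of the cap product of the cocycle $F$ with the fundamental class of $U$, pushed forward under the well-defined ``sum over $U$'' map $(\cK_\R)_U\to\{U\text{-invariant }\Z\text{-valued functions}\}$; the cycle it uses, $\mu_{\{u_i\}}=\sum_\sigma\sign(\sigma)[u_{\sigma(1)}|\cdots|u_{\sigma(n-1)}]$, represents a generator of $H_{n-1}(U,\Z)\cong\Z$. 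That each such sum is finite for fixed $x\in(\R_{>0})^n$ is the usual local finiteness underlying fundamental domain statements, which I would record as a preliminary lemma.

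Since $F$ is a cocycle, this pairing depends on $\mu_{\{u_i\}}$ only through its homology class, which is canonical up to sign; and a change of ordered basis of $U$ by $M\in\GL_{n-1}(\Z)$ multiplies both $[\mu_{\{u_i\}}]$ and $w_u=\sign\det(\log u_{ij})$ by $\det M$ (so their product is unchanged, as $\det M=\pm1$). Hence $w_u\cdot\langle F,\mu_{\{u_i\}}\rangle$, and therefore the entire left side of~(\ref{e:diaz}), is independent of the chosen basis of $U$. It then suffices to verify~(\ref{e:diaz}) for one convenient basis: by the theory of Shintani fundamental domains (\cite{colmezresidue}) one may choose the ordered basis $(u_1,\dots,u_{n-1})$ of $U$ so that $w_\sigma=1$ for all $\sigma\in S_{n-1}$, and for such a basis~(\ref{e:diaz}) is exactly the conclusion of Theorem~\ref{t:colmez}.

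I expect the obstacle to be twofold. The routine-looking but delicate part is the sign bookkeeping in the second step: one must check that the orientations $O_B$, $w_u$, $\sign(\sigma)$ and the determinantal sign built into $w_\sigma$ conspire so that all dependence on the basis is carried by the single product $w_u\cdot[\mu_{\{u_i\}}]$, which then cancels. The genuinely non-formal ingredient is the appeal to Colmez for the existence of a basis with $w_\sigma\equiv 1$: this rests on the construction of Shintani fundamental domains, not on anything internal to this paper. It is essential that Theorem~\ref{t:cqcocycle} is used in its exact, positive-orthant form, since that is precisely what upgrades a congruence modulo $\cL_\R$ to the equality with $1$.
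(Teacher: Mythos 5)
Your setup matches the paper's proof almost exactly: the cocycle $F$ is the paper's $\phi_U$, the exact (not merely mod $\cL_\R$) vanishing from the positive-orthant case of Theorem~\ref{t:cqcocycle} is used in the same way, the left side of (\ref{e:diaz}) is interpreted as $\Sigma_U$ applied to the cap product of $[\phi_U]$ with the class of the cycle $\sum_\sigma \sign(\sigma)[(v_{1,\sigma},\dotsc,v_{n,\sigma})]$, and the observation that a change of basis of $U$ rescales both $w_u$ and the cycle's class by $\det M=\pm1$ is exactly the basis-independence the paper gets from \cite[Lemma 5]{sczech}. The gap is in the final reduction. You claim that ``one may choose the ordered basis $(u_1,\dotsc,u_{n-1})$ of $U$ so that $w_\sigma=1$ for all $\sigma$,'' citing \cite{colmezresidue}. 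That is not what Colmez proves: his Lemme 2.1 produces a \emph{finite-index subgroup} $V\subset U$ admitting a basis with $w_\sigma\equiv 1$, not such a basis for $U$ itself, and no such basis of $U$ is known to exist in general. Your basis-independence argument only handles changes of basis of the fixed group $U$; it says nothing about replacing $U$ by a proper subgroup $V$, which changes the group cohomology, the fundamental class, and the summation set in (\ref{e:diaz}).

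The missing ingredient is precisely the transfer argument that occupies the second half of the paper's proof: one compares the pairings for $U$ and $V$ via restriction and corestriction, using $\cores(\alpha_V)=[U:V]\cdot\alpha_U$, $\res\phi_U=\phi_V$, and the factorization $\Sigma_U\circ(\text{nat.})=\Sigma_{U/V}\circ\Sigma_V$ on coinvariants, together with $\Sigma_{U/V}(\1)=[U:V]\cdot\1$. Colmez's Theorem~\ref{t:colmez} applied to $V$ gives $\Sigma_V(\phi_V\cap\alpha_V)=\1$, and the commutative diagram then yields $[U:V]\cdot\Sigma_U(\phi_U\cap\alpha_U)=[U:V]\cdot\1$, from which one divides by $[U:V]$ since the target consists of $\Z$-valued functions. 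Without this step (or some substitute argument producing a Colmez-type basis for $U$ itself, which is not available in the cited literature), your proof does not go through.
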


\begin{proof}  Colmez proved the existence of a finite index subgroup $V \subset U$ such that the condition $w_\sigma = 1$ for all $\sigma$ holds for some basis of $V$ (see~\cite{colmezresidue}, Lemme 2.1).  Fix such a subgroup $V.$
Our technique is to reduce the desired result for $U$ to the result for $V$, which is given by Colmez's theorem.

Endow the abelian group $\cK_\R$ with an action of $U$ by \begin{equation} \label{e:uaction}
(u f)(x) = f(u^{-1} * x). \end{equation} 
The key point of our proof is the construction of a cohomology class $[\phi_U] \in H^{n-1}(U, \cK_\R)$ as follows.
Given $v_1, \dotsc, v_n \in U$,   let
\begin{equation}
\label{e:tphidef}
 \phi_U(v_1, \dotsc, v_n) = \sign(\det( v_i )_{i=1}^{n}) c_{e_n}(v_1, \dotsc, v_n) \in \cK_\R. 
 \end{equation}
The $U$-invariance of $\phi_U$ follows from the definition of $c_{e_n}$ given in (\ref{e:cqdef})
along with the above-noted property that the action of $U$ preserves $\R_{>0}e_n.$
The fact that $\phi_U$ satisfies the cocycle property
\[ \sum_{i=0}^{n} (-1)^i\phi_U(v_0, \dotsc, \hat{v}_i, \dotsc, v_n) = 0 \]
 is given by Theorem~\ref{t:cqcocycle}, since the $v_{i}$ 
 lie in
the positive orthant. 
  We let $[\phi_U] \in H^{n-1}(U, \cK_\R)$ be the cohomology class  represented by the homogeneous cocycle $\phi_U$.

The basis $u_1, \dotsc, u_{n-1}$ of $U$ gives an explicit element $\alpha_U \in H_{n-1}(U, \Z) \cong \Z$ as 
follows. 
We represent homology classes by the standard projective resolution $C^*(U) = \Z[U^{*+1}]$ of $\Z$, and let
$\alpha_U$ be the class represented by the cycle
\begin{equation} \label{e:alphadef}
 {\alpha}(u_1, \dotsc, u_{n-1}) = (-1)^{n-1} w_u \sum_{\sigma \in S_{n-1}} \sign(\sigma) [(v_{1, \sigma}, \dotsc, v_{n, \sigma})] \in \Z[U^n]. 
 \end{equation}
It is a standard calculation that $d{\alpha}(u_1, \dotsc, u_{n-1}) = 0$ and
that  the cohomology class  $\alpha_U$
represented by ${\alpha}(u_1, \dotsc, u_{n-1}) $
depends only on $U$ and not the chosen basis $u_1, \dotsc, u_{n-1}$ (see \cite[Lemma 5]{sczech}).

The image of $([\phi_U], \alpha_U)$ under the cap product pairing
\[ H^{n-1}(U, \cK_\R) \times H_{n-1}(U, \Z) \longrightarrow \cK_{\R, U} := H_0(U, \cK_{\R}) \]
is by definition the image of the function $ \sum_{\sigma \in S_{n-1}}  w_\sigma c_Q(v_{1, \sigma}, \dotsc, v_{n, \sigma})$ in $\cK_{\R, U}$.

  Let $\cJ$ denote the group of functions $(\R_{>0})^n \longrightarrow \Z$, which is endowed with an action of $U$
as in (\ref{e:uaction}).  Denote by $\Sigma_U: \cK_{\R,U} \rightarrow \cJ^U$ the map defined by
 \begin{equation} \label{e:sigmau}
  (\Sigma_U f)(x) = \sum_{u \in U} f(u * x). 
  \end{equation} 
Note that the sum (\ref{e:sigmau}) is locally finite by the following standard compactness argument.  The action of $U$ preserves the
product of the coordinates of a vector, and applying $\log$ to the coordinates sends
the surface  $\{x_1 \cdots x_n = $ constant$\}$ to a hyperplane.  In this hyperplane,
  the image of a cone is bounded, and the action of $U$ is translation by a lattice. Given a point $x$, 
  only finitely many lattice points can translate $x$ into the bounded region corresponding to a cone. 

Now $\Sigma_U(\phi_U \cap \alpha_U) \in \cJ^U$ is by definition the function on the left side of (\ref{e:diaz}), namely
\[  \sum_{u \in U} \sum_{\sigma \in S_{n-1}}  w_\sigma c_{e_n}(v_{1, \sigma}, \dotsc, v_{n, \sigma})(u * x). 
\]

It remains  to analyze this picture when $U$ is replaced by its finite index subgroup $V$ chosen at the outset of the proof.
General properties of group cohomology (see [Br, pp. 112--114])  yield a commutative diagram:

\[ 
\xymatrix{
H^{n-1}(V, \cK_\R) \times H_{n-1}(V, \Z) \ar^(.73){\cap}[r] 
 \ar@<5ex>^{\cores}[d] & \cK_{\R, V} \ar[d] \ar^{\Sigma_V}[r] & \cJ^V \ar^{\Sigma_{U/V}}[d] \\
H^{n-1}(U, \cK_\R) \ar@<5ex>^{\res}[u] \times H_{n-1}(U, \Z) \ar^(.73){\cap}[r] & \cK_{\R, U} \ar^{\Sigma_U}[r] & \cJ^U.
}
\]
Here $\Sigma_{U/V} : \cJ^V \rightarrow \cJ^U$ is given by 
 \[ (\Sigma_{U/V} f)(x) = \sum_{u \in U/V} f(u * x). \] 

The desired result now follows from the fact that $\Sigma_V(\phi_V \cap \alpha_V) = \1 := \1_{(\R_{>0})^n}$ by Colmez's theorem, along 
with
\begin{align}
\cores(\alpha_V) = & \ [U: V] \cdot \alpha_U, \label{e:uvcores} \\
 \res{\phi_U} = & \ \phi_V, \label{e:uvres} \\ 
\Sigma_{U/V}(  \1) = & \ [U: V]  \cdot  \1. \label{e:uvsigma}
\end{align}
Equation (\ref{e:uvcores}) is proven in   \cite[Sect. III, Prop. 9.5]{brownbook},
whereas (\ref{e:uvres}) and (\ref{e:uvsigma}) are obvious.
\end{proof}

\subsection{The Shintani cocycle on $\GL_n(\Q)$}

Recall the notation $\Gamma = \GL_n(\Q)$.  In this section we define a Shintani cocycle $\Phi_{\mathrm{Sh}}$ on $\Gamma$.
 This cocycle will be directly related to the cocycles $\phi_U$ defined in the previous section; however, since our cocycle will be defined on the full group $\Gamma$ rather than the simpler groups $U \subset D$ in the positive orthant, we will need to consider the quotient  $\cK_\R/\cL_\R$ rather than  $\cK_\R$ (cf.\ definition (\ref{e:ldef}) and the appearance
of $\cL_\R$ in Theorem~\ref{t:cqcocycle}).  The relationship between $\Phi_{\mathrm{Sh}}$ and the $\phi_U$ in our cases of interest will be stated precisely in Section~\ref{rs:specialize} below.

\bigskip

 Let  $\R^n_{\Irr}$ denote the set of elements in $\R^n$ (viewed as row vectors) whose components are linearly independent over 
 $\Q$, i.e. the set of vectors $Q$ such that $Q \cdot x \neq 0$ for nonzero $x \in \Q^n$. The set $\R^n_{\Irr}$ is a right $\Gamma$-set by the action of right multiplication; we turn this into a left action by multiplication on the right by the transpose (i.e.\ $\gamma \cdot Q := Q \gamma^{t}$).  Note  that any $Q \in \R^n_{\Irr}$ satisfies the property that it does not lie in the $\R$-linear span of any $n-1$ vectors in $\Q^n \subset \R^n$.  The elements of $\R^n_{\Irr}$ will therefore serve as our set of auxiliary perturbation vectors as employed in Section~\ref{s:qperturb}.\symbolfootnote[2]{To orient the reader who may be familiar with the
notation of  \cite{colmez} or \cite{ddf} in which one takes $Q = e_n = (0, 0, \dotsc, 0, 1)$ as in Section~\ref{s:sfd}, one goes from this vector to an element of our $\R^n_{\Irr}$
by applying a change of basis given by the image in $\R^n$ of a basis of a totally real field $F$ of degree $n$.  Our notation allows for
rational cones $C$ and irrational perturbation vectors $Q$ rather than the reverse.  This is convenient for comparison with Sczech's cocycle,
in which one also chooses a  vector $Q \in \R^n_{\Irr}$.  See Section~\ref{rs:specialize} and in particular (\ref{e:qdef}) for more details.}
  We let $\cQ = \R^n_{\Irr}/\R_{>0}$, the set of equivalence class of elements of $\R^n_{\Irr}$ under
 multiplication by positive reals. 

Let $\cK \subset \cK_\R$ denote the subgroup generated by the characteristic functions of rational open cones, i.e.\ by the
characteristic functions of cones $C(v_1, \dotsc, v_n)$ with each $v_i \in \Q^n$.  Let $\cL  = \cL_\R \cap \cK$ with $\cL_\R$ as in
(\ref{e:ldef}).
The abelian group $\cK$ is naturally endowed with a left $\Gamma$-module structure via
\[ \gamma \cdot \varphi (x) = \sign(\det \gamma)) \varphi(\gamma^{-1}x), \]
and $\cL$ is a $\Gamma$-submodule of $\cK$.

Let $\cN$ denote the abelian group of maps $\cQ \lra \cK/ \cL$.  This space is endowed with a $\Gamma$-action 
given by $(\gamma f)(Q) = \gamma f( \gamma^{-1} Q)$.
We now define a homogeneous cocycle  \[ \Phi_{\mathrm{Sh}} \in Z^{n-1}(\Gamma, \cN). \]

For $A_1, \dotsc, A_n \in \Gamma$, let $\sigma_i$ denote the first column of $A_i$.  Given  $Q \in \cQ$,  define
\begin{equation} \label{e:phisdef}
 \Phi_{\mathrm{Sh}}(A_1, \dotsc, A_n)(Q) = \sign(\det(\sigma_1, \dotsc, \sigma_n))c_Q(\sigma_1, \dotsc, \sigma_n) 
 \end{equation}
with $c_Q$ as in (\ref{e:cqdef}).

\begin{theorem} \label{t:phis}  We have $\Phi_{\mathrm{Sh}} \in Z^{n-1}(\Gamma, \cN).$
\end{theorem}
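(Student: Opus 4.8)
The plan is to verify that $\Phi_{\mathrm{Sh}}$ is a well-defined element of $\cN = \mathrm{Map}(\cQ, \cK/\cL)$ on each tuple, is $\Gamma$-equivariant, and satisfies the homogeneous cocycle relation; the substantive content is the cocycle relation, which we will deduce from Theorem~\ref{t:cqcocycle}. First I would address well-definedness: for fixed $Q \in \R^n_{\Irr}$ and $A_1, \dotsc, A_n \in \Gamma$, the columns $\sigma_i \in \Q^n$ are either linearly dependent (in which case $c_Q(\sigma_1, \dotsc, \sigma_n) = 0$ by definition) or linearly independent, in which case $Q \in \R^n_{\Irr}$ guarantees $Q$ is not in the span of any $n-1$ of the $\sigma_i$, so $c_Q(\sigma_1, \dotsc, \sigma_n) \in \cK$ is defined; moreover $c_Q$ depends only on the image of $Q$ in $\cQ$ since scaling $Q$ by a positive real does not change the limit in (\ref{e:cqdef}). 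Thus $\Phi_{\mathrm{Sh}}(A_1, \dotsc, A_n)$ is a genuine function $\cQ \to \cK$, and a fortiori an element of $\cN$ after composing with $\cK \to \cK/\cL$.

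Next I would check $\Gamma$-equivariance, i.e.\ $\Phi_{\mathrm{Sh}}(\gamma A_1, \dotsc, \gamma A_n) = \gamma \cdot \Phi_{\mathrm{Sh}}(A_1, \dotsc, A_n)$ in $\cN$ for $\gamma \in \Gamma$. The first column of $\gamma A_i$ is $\gamma \sigma_i$, and the recipe for the $\Gamma$-action on $\cN$ unwinds to $(\gamma f)(Q)(x) = \sign(\det\gamma)\, f(Q\gamma^t)(\gamma^{-1}x)$. So the required identity reads
\[
\sign(\det(\gamma\sigma_1, \dotsc, \gamma\sigma_n))\, c_Q(\gamma\sigma_1, \dotsc, \gamma\sigma_n)(x) = \sign(\det\gamma)\, \sign(\det(\sigma_1, \dotsc, \sigma_n))\, c_{Q\gamma^t}(\sigma_1, \dotsc, \sigma_n)(\gamma^{-1}x),
\]
and since $\det(\gamma\sigma_1, \dotsc, \gamma\sigma_n) = \det\gamma \cdot \det(\sigma_1, \dotsc, \sigma_n)$, the sign factors match; the remaining claim $c_Q(\gamma\sigma_1, \dotsc, \gamma\sigma_n)(x) = c_{Q\gamma^t}(\sigma_1, \dotsc, \sigma_n)(\gamma^{-1}x)$ follows from (\ref{e:cqdef}) because $\1_{C(\gamma v_1, \dotsc, \gamma v_n)}(y) = \1_{C(v_1, \dotsc, v_n)}(\gamma^{-1}y)$ and $\gamma^{-1}(y + \epsilon Q^t) = \gamma^{-1}y + \epsilon(Q\gamma^{-t})^t$, combined with the convention $\gamma \cdot Q = Q\gamma^t$ (one must be slightly careful that the perturbation direction transforms by $\gamma^{-1}$ on column vectors, matching the left action on $\cQ$; I would spell this out). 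One also notes $Q\gamma^t \in \R^n_{\Irr}$ whenever $Q$ is, so no degeneracy is introduced.

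Finally, the cocycle relation: we must show $\sum_{i=0}^n (-1)^i \Phi_{\mathrm{Sh}}(A_0, \dotsc, \hat A_i, \dotsc, A_n)(Q) \equiv 0 \pmod{\cL}$ for every $Q$. Fix $Q \in \R^n_{\Irr}$ and set $v_i = \sigma_i$ (the first column of $A_i$), $i = 0, \dotsc, n$; these are $n+1$ vectors in $\Q^n \subset \R^n$. If all $n+1$ lie in a subspace $V$ of dimension $\le n-1$, then every $n$-subset is linearly dependent and each $\Phi_{\mathrm{Sh}}$ term vanishes. Otherwise apply Theorem~\ref{t:cqcocycle} with $m = n$, $V$ the span of the $v_i$ (dimension $n$, so $V = \R^n$), and $B$ the standard basis: then $O_B(\hat v_i) = \sign\det(v_0, \dotsc, \hat v_i, \dotsc, v_n)$ is exactly the sign factor appearing in (\ref{e:phisdef}) for the tuple $(A_0, \dotsc, \hat A_i, \dotsc, A_n)$, so $(-1)^i O_B(\hat v_i) c_Q(v_0, \dotsc, \hat v_i, \dotsc, v_n) = (-1)^i \Phi_{\mathrm{Sh}}(A_0, \dotsc, \hat A_i, \dotsc, A_n)(Q)$, and Theorem~\ref{t:cqcocycle} gives that the sum lies in $\cL_\R$; since all vectors are rational the sum in fact lies in $\cL_\R \cap \cK = \cL$. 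This holds for all $Q$, so the coboundary of $\Phi_{\mathrm{Sh}}$ vanishes in $\cN$. The one hypothesis of Theorem~\ref{t:cqcocycle} still to confirm is that $Q$ is not in the span of any $n-1$ of the $v_i$, which is immediate from $Q \in \R^n_{\Irr}$ and $v_i \in \Q^n$. The main obstacle is bookkeeping rather than mathematics: matching the sign conventions and the direction of the perturbation vector under the $\Gamma$-action in the equivariance step, and making sure the orientation factor $O_B(\hat v_i)$ in Theorem~\ref{t:cqcocycle} is identified with the determinant sign in the definition of $\Phi_{\mathrm{Sh}}$ without an extra spurious sign.
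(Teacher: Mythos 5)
Your proposal is correct and takes essentially the same route as the paper's (much terser) proof: $\Gamma$-invariance directly from the definitions, and the cocycle relation from Theorem~\ref{t:cqcocycle} applied with $B$ the standard basis of $\R^n$, the degenerate case $\dim\mathrm{span}(\sigma_0,\dotsc,\sigma_n)<n$ being trivial and the rationality of the $\sigma_i$ putting the coboundary in $\cL_\R\cap\cK=\cL$. One small slip in the equivariance step: unwinding the action on $\cN$ gives $(\gamma f)(Q)(x)=\sign(\det\gamma)\,f(Q\gamma^{-t})(\gamma^{-1}x)$, so the perturbation vector in your displayed identity should be $Q\gamma^{-t}$ rather than $Q\gamma^{t}$ --- which is exactly what the transformation rule $\gamma^{-1}(y+\epsilon Q^t)=\gamma^{-1}y+\epsilon(Q\gamma^{-t})^t$ that you cite produces, so the check goes through as you describe once that exponent is corrected.
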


\begin{proof}
 The fact that $\Phi_{\mathrm{Sh}}$ is $\Gamma$-invariant follows directly from the definitions.  In $\mc K,$ the cocycle property  
\[ \sum_{i=0}^{n} (-1)^i \Phi_{\mathrm{Sh}}(A_0, \dotsc, \hat{A}_i, \dotsc, A_n)(Q) \equiv 0 \pmod{\cL} \] follows  from Theorem~\ref{t:cqcocycle} using for $B$ the standard basis of $\R^n.$
\end{proof}
 
Denote by $[\Phi_{\mathrm{Sh}}] \in H^{n-1}(\Gamma, \cN)$  the cohomology class represented by the homogeneous cocycle $\Phi_{\mathrm{Sh}}$.

\section{Applications to Zeta Functions} \label{s:zeta}

\subsection{Totally real fields} \label{rs:specialize} 

Let $F$ be a totally real field of degree $n$, and denote by $J_1, \dotsc, J_n: F \rightarrow \R$ the $n$
real embeddings of $F$.  Write $J = (J_1, \dotsc, J_n): F \rightarrow \R^n$.
 We denote the action of $F^*$ on $\R^n$ via composition with $J$ and componentwise multiplication by $(x, v) \mapsto x * v$.  Let $U$ denote a subgroup of finite index in the group of totally positive units in $\cO_F^*$.  We can apply the discussion
 of Section~\ref{s:sfd} on fundamental domains to the group $J(U) \subset D$.
 
  Note that  $e_n = (0,0, \dotsc, 0, 1)$ satisfies the property that
it does not lie in the $\R$-linear span of any $n-1$ elements of the form $J(u)$ for $u \in F^*$.  Indeed, given $u_1, \dotsc, u_{n-1} \in F^*$,
there exists an $x \in F^*$ such that $\Tr_{F/\Q}(x u_i) =0$ for all $i = 1, \dotsc, n-1$.  
Dot product with $J(x)$ defines an $\R$-linear functional on $\R^n$ that vanishes on the $J(u_i)$ but not on $e_n$, proving the claim.

In this section we explain the relationship between the class $[\Phi_{\mathrm{Sh}}]$ and 
the class $[\phi_U]$ defined in Section~\ref{s:sfd} (where we write $\phi_U$ for $\phi_{J(U)}$). 
 Choosing a $\Z$-basis $w = (w_1, w_2, \dotsc, w_n)$ of $\cO_F$ yields an embedding $\rho_w\colon F^* \rightarrow \Gamma$ given by
\begin{equation}   (w_1u, w_2u, \dotsc, w_nu) =  (w_1, w_2, \dotsc, w_n) \rho_w(u). \label{e:rhowdef} \end{equation}
Pullback by $\rho_w$ (i.e.\ restriction) yields a class $\rho_w^* \Phi_{\mathrm{Sh}} \in H^{n-1}(U, \cN)$.

Denote by $J(w) \in \GL_n(\R)$ the matrix given by $J(w)_{ij} = J_i(w_j)$.  Note that if we let $\diag(J(u))$  be the diagonal matrix with diagonal entries $J_i(u)$, then 
\begin{equation} \label{e:diag}
\rho_w(u) = J(w)^{-1} \diag(J(u)) J(w). 
\end{equation}
Let \begin{equation} \label{e:qdef}
Q = (0, 0, \dotsc, 1)J(w)^{-t}. 
\end{equation}
The vector $Q$ is the image under $J_n$ of the dual basis to $w$ under the trace pairing $F \times F \rightarrow \Q$, $(x, y) \mapsto \Tr_{F/\Q}(xy)$.
In particular, $Q$ is an element of $\R^n_{\Irr}$. Furthermore, (\ref{e:diag}) and (\ref{e:qdef}) yield  \[ Q \rho_w(x)^t = J_n(x) Q \] for $x \in F^*$,
which implies that
the image of $Q$ in $\cQ$ is invariant under the action of $U$.  We can therefore view $Q$ as an element of
$H^0(U, \Z[\cQ])$.  In conjunction with the canonical map $\cN \times \cQ \rightarrow \cK/\cL$ given by $(f, Q) \mapsto f(Q)$,
the cup product gives a map \[ H^{n-1}(U, \cN) \times H^0(U, \Z[\cQ]) \rightarrow H^{n-1}(U, \cK/\cL) \]
yielding an element $\rho_w^* [\Phi_{\mathrm{Sh}}] \cup Q \in H^{n-1}(U, \cK/\cL)$.

Now consider the map induced by $J(w)$, denoted
 \[ J(w)^*: \cK_\R \lra \cK_\R, \] given by $(J(w)^*f)(x) =  f(J(w)x)$.  Our desired relation is 
 \[ J(w)^* [\phi_U] = \rho_w^* [\Phi_{\mathrm{Sh}}] \cup Q \]
  in $H^{n-1}(U, \cK_\R/\cL_\R).$ In fact, this relationship holds on the level of cocycles as follows.
  For any $x \in F^*$, we define a modified cocycle $\Phi_{\Sh, x} \in Z^{n-1}(\Gamma, \cN)$ by letting $\gamma = \rho_w(x)^{-1}$ and setting
  \begin{equation} \label{e:phishx}
   \Phi_{\Sh, x}(A_1, \dotsc, A_n) = \Phi_{\Sh}( A_1 \gamma, \dotsc,  A_n \gamma). 
   \end{equation}
It is a standard fact in group cohomology that 
the cohomology class represented by ${\Phi}_{\Sh, x}$ is independent of $x$ and hence equal to $[\Phi_{\Sh}]$
(see \cite[Lemma 4]{sczech}).
We have the following equality of cocycles:
\[
 J(w)^* {\phi}_{U} = \rho_w^* \Phi_{\mathrm{Sh}, w_1} \cup Q
\]
in $Z^{n-1}(U, \cK_\R/\cL_\R)$. 
In concrete terms, this says for $u = (u_1, \dotsc, u_n)$:
\begin{equation}  \label{e:jw}
 \phi_U(u)(J(w)x) = \Phi_{\Sh, w_1}(\rho_w(u), Q)(x).
\end{equation}
 In Section~\ref{s:svz} this relationship will be used along with Theorem~\ref{t:ddf} to relate
the class $[\Phi_{\mathrm{Sh}}]$  to special values of zeta functions attached to the field $F$.
Over the next few sections we first we recall Shintani's results on cone zeta functions.

\subsection{Some bookkeeping} \label{s:bookkeeping}

We will be interested in sums over the points lying in the intersection of open simplicial cones with certain lattices in $\R^n$.
In this section we introduce a convenient way of enumerating these points.
Let $\cV = \Q^n/\Z^n$, and consider for $v \in \cV$ the associated lattice $v + \Z^n \subset \R^n$.

Let $C$ be a rational open cone.  By scaling the generators of $C$,
we can find $\R$-linearly independent vectors $\sigma_1,\ldots,\sigma_r\in \Z^n$ such that
$
C=\R_{>0}\sigma_1+\cdots + \R_{>0}\sigma_r.
$
Let $\sP = \sP(\sigma_1, \dotsc, \sigma_r)$ denote the half-open parallelpiped generated by the $\sigma_i$:
\begin{equation} \label{e:pdef}
\sP = \left\{x_1\sigma_1+\cdots + x_r\sigma_r : 0 < x_1,\ldots,x_r \le 1\right\},
\end{equation}
with the understanding that $\sP(\emptyset) = \{0\}$ in the case $r=0$.
Then
\begin{equation} \label{e:cdecomp}
C \cap (v + \Z^n) =\bigsqcup_{a\in \sP \cap (v + \Z^n)}(a + \Z_{\geq 0} \sigma_1+\cdots+\Z_{\geq 0} \sigma_r),
\end{equation}
where the disjointness of the union follows from the linear independence of the $\sigma_i$.

Now let $C$ be a rational open cone of maximal dimension $r= n$ in $\R^n$.
Let $Q \in \cQ$ and consider the set $C_Q$ defined in Section~\ref{s:qperturb}, consisting of the disjoint union $C$ 
and some of its boundary faces of all dimensions.  We would like to enumerate the points in $C_Q \cap (v + \Z^n)$.

For each subset $I \subset \{1, \dotsc, n\}$, the boundary face $C_I = C(\sigma_i : i \in I)$ is assigned a weight
via the $Q$-perturbation process denoted $\weight(C_I) \in \{ 0, 1\}$ and  given by (\ref{e:weight}).
Associated to each  cone $C_I$ is the parallelpiped $\sP_I = \sP(\sigma_i: i \in I)$.  We 
have
\begin{equation}
\label{e:cqdecomp0}
 C_Q \cap (v + \Z^n) =\bigsqcup_{\stack{I \subset \{1, \dotsc, n\}}{a\in \sP_I \cap (v + \Z^n)}} \weight(C_I) (a + \sum_{i \in I} \Z_{\ge 0} \sigma_i),
\end{equation}
where our notation means that the set $(a + \sum \Z_{\ge 0} \sigma_i)$ should be included
if $\weight(C_I) = 1$ and not included if $\weight(C_I) = 0$.

Let $\sigma \in M_n(\Z)  \cap \Gamma$ denote the matrix whose columns are the $\sigma_i$.
For each $a \in \sP_I \cap (v + \Z^n)$ that occurs as $I$ ranges over all subsets of $\{1, \dotsc, n\}$, 
we can associate the class $x = a - v \in \Z^n/\sigma \Z^n$.
Conversely, given a class $x \in \Z^n/\sigma \Z^n$, there will be at least one $a$ giving rise to that class. 

To be more precise, let $J = J(x)$ denote the set of indices $j$ for which $(\sigma^{-1}(v+x))_j \in \Z$.  The number of points $a$
giving rise to the class $x$ is $2^{\# J}$. Let $\overline{J} = \{1, \dotsc, n\}  - J$. 
For each $I \supset \overline{J}$, we can write down a unique point $a_I \in \sP_I$ such
that the image of $a_I -v$ in $\Z^n/\sigma \Z^n$ is equal to $x$. 
We define $a_I$ by letting $\sigma^{-1}(a_I)$ be congruent to $\sigma^{-1}(v + x)$ modulo $\Z^n$, and 
further requiring $\sigma^{-1}(a_I)_i \in (0, 1)$ if $i \not \in J$, and
\begin{equation}\label{e:aIdef} \sigma^{-1}(a_I)_i := \begin{cases} 0 & i \in J \cap \overline{I} = \overline{I} \\
1 & i \in J \cap I. 
\end{cases}
\end{equation}
We can then rewrite (\ref{e:cqdecomp0}) as
\begin{equation} \label{e:cqdecomp}
 C_Q \cap (v + \Z^n) =\bigsqcup_{\stack{x \in \Z^n/\sigma \Z^n}{I \supset \overline{J(x)}}}
\weight(C_I)(a_I + \sum_{i \in I} \Z_{\ge 0} \sigma_i).
\end{equation}
This decomposition will be used in Sections~\ref{s:genscz} and~\ref{s:explicit}.

\subsection{Cone generating functions}
Let $C$ be a rational open cone in $\R^n$ and let $v\in\Q^n$.  Let $x_1,\ldots,x_n$ be variables and let $g(C,v)$ be the generating series for the set of integer points in $C-v$:
\[
g(C,v)(x)=\sum_{m\in (C-v)\cap \Z^n} x^m\in \Q[[x,x^{-1}]],
\]
where as usual $x^m$ denotes $x_1^{m_1} \cdots x_{n}^{m_n}$.
If $\mu\in\Z^n$, then $
g(C,v+\mu)=x^{-\mu}g(C,v).$

The series $g(C,v)$ is actually the power series expansion of a rational function.  In fact, the decomposition (\ref{e:cdecomp})  gives rise to the identity
\begin{equation} \label{e:shdef}
 g(C,v)(x) = \frac{\sum_{a \in (\sP-v) \cap\Z^n} x^a }{(1 - x^{\sigma_1}) \cdots (1 - x^{\sigma_r})}\in\Q(x),
\end{equation}
where $\sigma_i$ are integral generators of the cone $C$, and $\sP = \sP(\sigma_1, \dotsc, \sigma_r)$ 
is the half-open parallelpiped defined in (\ref{e:pdef}).

 Write $c$ for the characteristic function of $C$ and define $g(c,v)=g(C,v)$.
The following fundamental algebraic result was proved independently by Khovanskii and Pukhilov~\cite{KP} and Lawrence~\cite{Lawrence} (cf.~\cite[Theorem 2.4]{bar}).
\begin{prop} There is a unique map $g:\cK \times\Q^n\lra \Q(x)$ that is
$\Q$-linear in the first variable such that
$g(c,v)=g(C,v)$ for all rational open cones $C$ 
and $g(c,v)=0$ if $c\in\cL$.
\end{prop}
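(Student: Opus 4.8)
The plan is to realize $g(\,\cdot\,,v)$ as the restriction to $\cK$ of the ``integer-point generating function'' valuation on the algebra of \emph{all} rational polyhedra — precisely the object furnished by the theorem of Khovanskii--Pukhilov \cite{KP} and Lawrence \cite{Lawrence} (cf.\ \cite[Theorem 2.4]{bar}). In the form needed here: let $\cC(\Q^n)$ be the $\Q$-vector space spanned inside the $\Q$-valued functions on $\R^n$ by the indicator functions $\1_P$ of rational polyhedra $P$ — finite intersections of rational half-spaces, open or closed, and rational hyperplanes — and recall that there is a unique $\Q$-linear map $F\colon \cC(\Q^n)\to\Q(x_1,\dotsc,x_n)$ such that $F(\1_P)$ is the rational function whose Laurent expansion on its domain of convergence equals $\sum_{m\in P\cap\Z^n}x^m$ when $P$ contains no affine line, and $F(\1_P)=0$ when $P$ contains an affine line. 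A relatively open rational simplicial cone $C(v_1,\dotsc,v_r)$ is the intersection of the $r$ strict half-spaces $\langle\,\cdot\,,v_i^*\rangle>0$ with the $n-r$ hyperplanes cutting out its span, so $\1_C\in\cC(\Q^n)$; hence $\cK\subseteq\cC(\Q^n)$, and since $v\in\Q^n$ the translate $C-v$ is again a rational polyhedron.

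I would then define $g(\1_C,v):=F(\1_{C-v})$ on the generating cones and extend additively. This is well defined: any additive relation $\sum_i n_i\1_{C_i}=0$ in $\cK$ (equivalently, as a function on $\R^n$) becomes, after replacing each $C_i$ by $C_i-v$, a relation $\sum_i n_i\1_{C_i-v}=0$ in $\cC(\Q^n)$, which $F$ annihilates by linearity — this is exactly the point for which the Khovanskii--Pukhilov--Lawrence valuation, a genuine linear map on the whole polyhedral algebra, is invoked. Every open simplicial cone is line-free, so for such $C$ the decomposition (\ref{e:shdef}) already exhibits $\sum_{m\in(C-v)\cap\Z^n}x^m$ as a rational function, which agrees with $F(\1_{C-v})$ by the characterizing property of $F$; thus $g(\1_C,v)=g(C,v)$, giving the first required property. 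Uniqueness is then immediate, since the $\1_C$ generate $\cK$, their images are prescribed, and additivity determines the rest (the resulting additive map being automatically $\Q$-linear on $\cK\otimes\Q$).

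To handle the vanishing on $\cL$, I would note that $\cL$ is generated by indicator functions $\1_L$ of wedges $L=\R v_1+\R_{>0}v_2+\cdots+\R_{>0}v_r$ as in (\ref{e:ldef}), so by additivity it suffices to show $F(\1_{L-v})=0$. But $L+\R v_1=L$, hence $L-v$ contains the affine line $-v+\R v_1$, and $F$ kills it by definition. (Equivalently, choosing $k\in\Z_{>0}$ with $kv_1\in\Z^n$, invariance of $L-v$ under the integral translation $kv_1$ forces $(1-x^{kv_1})F(\1_{L-v})=0$ in the integral domain $\Q(x)$, so $F(\1_{L-v})=0$.) Consequently $g(c,v)=0$ for all $c\in\cL$, completing the argument.

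The only substantive ingredient is the Khovanskii--Pukhilov--Lawrence theorem itself; everything above is formal bookkeeping around it. So the main obstacle is really that theorem — showing that $C\mapsto\sum_{m\in(C-v)\cap\Z^n}x^m$ extends to a well-defined linear valuation on the polyhedral algebra (via Brion-style triangulations or the Lawrence--Varchenko signed cone decomposition) and that line-containing polyhedra are annihilated. Since the excerpt cites this result, the natural course is to import it as a black box; a self-contained treatment would amount to reproving it for the subalgebra generated by simplicial cones and wedges.
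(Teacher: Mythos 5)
Your proposal is correct and takes essentially the same route as the paper, which gives no proof beyond citing Khovanskii--Pukhilov, Lawrence, and Barvinok: you import that valuation as a black box and supply the routine bookkeeping (translation by $v$, uniqueness from the generators of $\cK$, and vanishing on wedges). One microscopic slip: for $r\ge 2$ the specific line $-v+\R v_1$ is \emph{not} contained in $L-v$, since $\R v_1\not\subset L$ (the coefficients of $v_2,\dotsc,v_r$ must be strictly positive); take instead $w-v+\R v_1$ for any $w\in L$, or simply rely on your parenthetical $(1-x^{kv_1})$ translation-invariance argument, which is already complete.
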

\noindent Thus we may view $g$ as a pairing
\[
g:\cK/\cL\times \Q^n\lra \Q(x).
\]
Let $\Q((z))$ be the field of fractions of the power series ring $\Q[[z]]$.
In our applications, we will consider images of the functions $g(C,v)$ under the mapping $\Q(x)\to \Q((z))$ defined by $x_i \mapsto e^{z_i}$.  Define 
\[
h(C,v)(z)=e^{v\cdot z}g(C,v)(e^{z_1},\ldots,e^{z_n})\in \Q((z)).
\]
With $\sigma_1,\ldots,\sigma_r$ and $\sP$  as above, we have
\[
h(C,v)(z)=\frac{\sum_{a \in \sP \cap (v + \Z^n)} e^{a\cdot z} }{(1 - e^{\sigma_1\cdot z}) \cdots (1 - e^{\sigma_r\cdot z})}.
\]
  From the corresponding properties of the functions $g(C,v)$, it follows immediately that $h$ may be viewed as a pairing
\[
h:\cK/\cL\times \Q^n/\Z^n\lra \Q((z))
\]
that is linear in the first variable.  
We call $h$ the \emph{Solomon--Hu} pairing owing to its first appearance in the works~\cite{So,husolomon}.

\subsection{Special values of Shintani zeta functions} \label{s:svszf}
We now recall results relating the generating function $g(C,v)$ introduced above to special values of complex analytic  {\em Shintani zeta functions}, whose definition we now recall.

Let $\cM \subset M_n(\R)$ be the subset of matrices
such that the entries of each column are linearly independent over $\Q$ (i.e.\ for each nonzero row vector $x \in \Q^n$ and $M \in \cM$, the vector $xM$ has no component equal to 0).  
Let $\cD \subset \SL_n(\R)$ be the subgroup of $n \times n$ real diagonal matrices with determinant 1.
Given $M \in \cM$, define a polynomial $f_M \in \R[x_1, \dots, x_n]$ by
\begin{align}
 f_M(x_1, \dotsc, x_n) &= \N((x_1, \dotsc, x_n)M) \nonumber \\
 & = (xM)_1 (xM)_2 \cdots (xM)_n.  \label{e:fmdef}  
 \end{align}
Note that  $f_M$ depends only on the image of the matrix $M$ in $\cM/\cD$.

View the elements of the rational open cone $C=C(w_1,\ldots,w_r) \subset \R^n$ as column vectors.
Choose the $w_i$ to have integer coordinates.
We consider a matrix $M\in\cM$ such that $(C,M)$ satisfies the following positivity condition:   
\begin{equation}\label{E:Mpos}
M^t w \subset (\R_{>0})^n \text{ for all } w \in C.
\end{equation}

This positivity condition will be needed when defining analytic Shintani zeta functions.  When dealing with their algebraic incarnations, i.e.\ the cone generating functions $h(C,v)$  introduced in the previous section, it is not required.  This added flexibility in the algebraic setting is crucial for the cohomological constructions to be described in the following sections.
With $C$ and $M$ as above and  a vector  $v \in \cV$, define the Shintani zeta function
\[ \zeta(C, M, v, s) = \sum_{x \in C \cap v + \Z^n} \frac{1}{f_M(x)^s}. \]
Using~\eqref{E:Mpos}, it is easy to see that this series is absolutely convergent for $s\in \C$ with $\real(s)>1$.  
Letting $\sP=\sP(w_1,\ldots,w_r)$ be the parallelpiped defined in (\ref{e:pdef}) and  $W = (w_1, \dotsc, w_r)$ the $n \times r$ matrix whose columns are the 
generators of the cone $C$, we define
\[
Z(C,M,a,s)=\sum_{x\in(\Z_{\geq 0})^r}\frac{1}{f_M(a+Wx)^s}
\]
for $a\in\sP\cap (v+\Z^n)$.
We obtain the finite sum decomposition 
\[
\zeta(C,M,v,s)=\sum_{a\in \sP\cap v+\Z^n}Z(C, M,a,s).  
\]
Shintani~\cite{Sh} proved that each $Z(C,M, a, s)$, and hence $\zeta(C,M,v,s)$ itself, admits a meromorphic continuation to $\C$.  

Shintani also gave a formula for the values of these zeta functions at nonpositive integers.  Observe that if $k$ is a nonnegative integer, then $f_M(x)^k$ is $(k!)^n$ times the coefficient of $\N(z)^k$ in the Taylor series expansion of $e^{zM^tx}$.  Summing,  we obtain the nonsense identity chain
\begin{multline*}
\text{``} \zeta(C,M,v,-k)=\sum_{x\in C\cap(v+\Z^n)} f_M(x)^k = (k!)^n\coeff\left(
\sum_{x\in C\cap(v+\Z^n)} e^{zM^tx}, \N(z)^k\right)\\
=(k!)^n\coeff\left(h(C,v)(zM^t),\N(z)^k\right)\text.{"}
\end{multline*}
Almost nothing in the above identity chain is actually defined
and in particular the given sums do not converge.  Further, $h(C,v)(zM^t)$ is not holomorphic on a punctured neighborhood of $z=(0,\ldots,0)$ if $n>1$, making the notion of coefficient undefined.  Nonetheless, via an algebraic trick---really, an algebraic version of the trick used by Shintani in his proof of the analytic continuation of $\zeta(C,M,v,s)$---we generalize the notion of coefficient to a class of functions including the $h(C,v)(zM^t)$.  Remarkably, with this generalized notion of coefficient, the identity
\begin{equation} \label{e:shintanibogus}
\zeta(C,M,v,-k)=(k!)^n\coeff\left(h(C,v)(zM^t),\N(z)^k\right)
\end{equation}
holds.  We now define Shintani's operator and state his theorem giving a rigorous statement of (\ref{e:shintanibogus}).
 
Let $K$ be a subfield of $\C$. For $1\leq j\leq n$, we write
\begin{equation} \label{e:zjdef}
Z_j = (z_jz_1,\ldots,z_jz_{j-1},z_j,z_jz_{j+1},\ldots,z_jz_n).
\end{equation}
The following lemma is elementary.
\begin{lemma}\label{l:ppas}
Let $g \in K[[z]]$, let $p\in K[z]$ be homogeneous of degree $d$ with $\coeff(p,z_j^d)\neq 0$, and let $G=g/p$.  Then $G(Z_j)\in z_j^{-d}K[[z]]$.
\end{lemma}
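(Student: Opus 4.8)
\textbf{Proof proposal for Lemma~\ref{l:ppas}.}

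The plan is to make the substitution $z \mapsto Z_j$ explicit on the numerator and denominator separately, and to track the order of vanishing in the variable $z_j$. First I would observe that the substitution $z \mapsto Z_j$ replaces each variable: $z_i \mapsto z_j z_i$ for $i \neq j$, and $z_j \mapsto z_j$. Hence for a monomial $z^m = z_1^{m_1}\cdots z_n^{m_n}$ we have $(z^m)(Z_j) = z_j^{\,|m| - m_j}\, z^m$ where $|m| = m_1 + \cdots + m_n$ is the total degree; in particular the exponent of $z_j$ is multiplied-up by the degree in all the \emph{other} variables. Since $g \in K[[z]]$ has only monomials with nonnegative exponents, $|m| - m_j \ge 0$ for every monomial appearing, so $g(Z_j) \in K[[z]]$; this handles the numerator.

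Next I would handle the denominator. Write $p = \sum_{|m| = d} c_m z^m$, a homogeneous polynomial of degree $d$. Applying the substitution, $p(Z_j) = \sum_{|m|=d} c_m z_j^{\,d - m_j} z^m = z_j^{\,d}\sum_{|m|=d} c_m z_j^{-m_j} z^m$. Now the key point is that $0 \le m_j \le d$ for each term (since $p$ is homogeneous of degree $d$, so no single exponent can exceed $d$), hence $z_j^{\,d-m_j}z^m$ is an honest monomial in $K[z]$, and therefore $p(Z_j) = z_j^{\,d}\, \tilde p$ where $\tilde p \in K[z]$. Moreover the constant term of $\tilde p$ (i.e. the coefficient of $z_j^0$ in the sum, or equivalently the term $m = d e_j$, which is the unique monomial of degree $d$ with $m_j = d$) equals $\coeff(p, z_j^d) \neq 0$ by hypothesis. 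So $\tilde p$ is a polynomial with nonzero constant term, hence a unit in $K[[z]]$, and $1/\tilde p \in K[[z]]$.

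Finally I would combine the two: $G(Z_j) = g(Z_j)/p(Z_j) = z_j^{-d}\cdot g(Z_j)\cdot \tilde p^{-1}$, and since both $g(Z_j)$ and $\tilde p^{-1}$ lie in $K[[z]]$, their product does too, giving $G(Z_j) \in z_j^{-d} K[[z]]$ as claimed. I expect the only point requiring any care is the bookkeeping on exponents under the substitution—confirming $|m| - m_j \ge 0$ for the numerator and $0 \le m_j \le d$ for the homogeneous denominator—and the identification of the constant term of $\tilde p$ with $\coeff(p, z_j^d)$, which is what makes $\tilde p$ invertible; everything else is formal manipulation of power series.
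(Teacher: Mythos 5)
Your proof is correct: the paper itself gives no argument for this lemma (it is simply declared elementary), and your computation—tracking that each monomial $z^m$ becomes $z_j^{\,|m|-m_j}z^m$ under $z\mapsto Z_j$, so that $g(Z_j)\in K[[z]]$ while $p(Z_j)=z_j^{\,d}\tilde p$ with $\tilde p$ having constant term $\coeff(p,z_j^d)\neq 0$ and hence invertible in $K[[z]]$—is precisely the intended elementary verification. Nothing further is needed.
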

Call a homogeneous polynomial $p\in K[z]$ of degree $d$ \emph{powerful} if the power monomials in $p$ all have nonzero coefficients, i.e., if $\coeff(p,z_j^d)\neq 0$ for all $j$.  The powerful polynomials of interest to us arise as follows.  Call a linear form $L(z)=\ell_1z_1+\cdots+\ell_nz_n$ \emph{dense} if $\ell_j\neq 0$ for all $j$.  If $L_1,\ldots,L_r$ are dense linear forms, then $p=L_1\cdots L_r$ is powerful.

\begin{definition}
 Let $K((z))^{\hd}\subset K((z))$ be the subalgebra consisting of   $G\in K((z))$ that can be written in the form $G=g/p$ for a power series $g\in K[[z]]$ and a powerful homogeneous polynomial $p\in K[z]$.
\end{definition}

\begin{lemma}\label{L:hd} Suppose $C$ is a rational open simplicial cone in $\R^n$, $M\in \cM$ and $v\in\Q^n$.  Let $\Q(\{m_{ij}\})$ be the field generated by the entries of $M$.  Then $h(C,v)(zM^t) \in \Q(\{m_{ij}\})((z))^{\hd}$.
\end{lemma}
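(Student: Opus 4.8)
The plan is to substitute $z \mapsto zM^t$ directly into the closed-form expression
\[
h(C,v)(z) = \frac{\sum_{a\in\sP\cap(v+\Z^n)} e^{a\cdot z}}{\prod_{i=1}^r\bigl(1 - e^{\sigma_i\cdot z}\bigr)},\qquad \sP=\sP(\sigma_1,\dots,\sigma_r),
\]
recalled above, where $\sigma_1,\dots,\sigma_r\in\Z^n$ are $\R$-linearly independent integral generators of the simplicial cone $C$ and the index set $\sP\cap(v+\Z^n)$ is finite (it is a bounded subset of the lattice $v+\Z^n$). Substituting $z\mapsto zM^t$ turns each linear form $\sigma_i\cdot z$ into $\sigma_i\cdot(zM^t) = (\sigma_i^t M)\cdot z =: L_i(z)$ and each $a\cdot z$ into $(a^t M)\cdot z$; since $\sigma_i\in\Z^n$ and $a\in v+\Z^n\subset\Q^n$, all coefficients that appear lie in the ring $\Q[\{m_{ij}\}]\subset\Q(\{m_{ij}\})$.

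The crucial point is that each $L_i$ is a \emph{dense} linear form. Indeed $\sigma_i^t$ is a nonzero rational row vector, being the transpose of a generator of $C$, so the defining property of $\cM$ --- that $xM$ has no zero coordinate for every nonzero row vector $x\in\Q^n$ --- forces every coordinate of $\sigma_i^t M$, hence every coefficient of $L_i$, to be nonzero. By the remark preceding the definition of $K((z))^{\hd}$, it follows that $p(z):=\prod_{i=1}^r L_i(z)$ is a powerful homogeneous polynomial of degree $r$ with coefficients in $\Q(\{m_{ij}\})$. I would then factor $1 - e^{L_i(z)} = -L_i(z)\,u_i(z)$ with $u_i(z)=\sum_{k\ge0}L_i(z)^k/(k+1)!\in\Q(\{m_{ij}\})[[z]]$; since $u_i$ has constant term $1$, it is a unit in $\Q(\{m_{ij}\})[[z]]$ (and in particular $1-e^{L_i(z)}\neq 0$, so the substitution is legitimate in the fraction field). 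The numerator $N(z):=\sum_{a\in\sP\cap(v+\Z^n)} e^{(a^t M)\cdot z}$ is a finite sum of exponentials of linear forms over $\Q(\{m_{ij}\})$, hence lies in $\Q(\{m_{ij}\})[[z]]$. Combining these,
\[
h(C,v)(zM^t) = \frac{N(z)}{\prod_{i=1}^r\bigl(-L_i(z)\,u_i(z)\bigr)} = \frac{(-1)^r N(z)\prod_{i=1}^r u_i(z)^{-1}}{p(z)},
\]
whose numerator lies in $\Q(\{m_{ij}\})[[z]]$ (each $u_i$ being a unit there) and whose denominator $p(z)$ is powerful homogeneous. This is precisely the definition of membership in $\Q(\{m_{ij}\})((z))^{\hd}$, so the proof concludes.

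I do not expect a serious obstacle: the argument is a bookkeeping manipulation of the explicit formula for $h(C,v)$, together with elementary power-series identities. The one point deserving care is keeping track of transposes so as to recognize that the coefficients of the denominator linear forms $L_i$ are exactly the entries of the row vectors $\sigma_i^t M$; once this identification is in hand, the hypothesis $M\in\cM$ supplies exactly the density that is needed.
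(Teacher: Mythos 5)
Your argument is correct and is essentially the paper's own proof: both substitute $z\mapsto zM^t$ into the closed-form expression for $h(C,v)$, factor each $1-e^{zM^t\sigma_i}$ as the linear form $L_i(z)$ times a unit in $\Q(\{m_{ij}\})[[z]]$, and use the defining property of $\cM$ applied to the nonzero rational vectors $\sigma_i$ to see that each $L_i$ is dense, hence that the denominator $\prod_i L_i$ is powerful. No gaps.
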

\begin{proof}  Write $C = C(w_1, \dotsc, w_r)$
and let $a\in \sP\cap(v+\Z^n)$.  Then $a\in\Q^n$, so $e^{zM^ta}\in K[[z]]$.  For each $j=1,\ldots,r$, set $L_j(z)=zM^tw_j$.  Then we can write $1-e^{zM^tw_j}=L_j(z)g_j(z)$ with $g_j\in K[[z]]^\times$.  Setting $f_a=e^{zM^ta}g_1^{-1}\cdots g_r^{-1}$ and $p=L_1\cdots L_r$, we have \[ h(C,v)(zM^t)= \sum_{a \in \sP \cap(v+\Z^n)} f_a/p.\]  It remains to show that $p$ is powerful.
Since $M\in \cM$ and $w_j\in \Q^n$ for all $j$, it follows that each $L_j$ is dense.  Therefore $p$ is powerful as desired.
\end{proof}
By Lemma~\ref{l:ppas}, if $G\in K((z))^{\hd}$, then $\coeff( G(Z_j),z^m)$ makes sense for any $j$ and any $m\in\Z^n$. This leads to the following definition.

\begin{definition} For $j =1, \dotsc, n$, define operators $\Delta^{(k)}_j:K((z))^{\hd}\to K$ by
 \begin{equation} \label{e:deltai}
 \Delta^{(k)}_j G =  \coeff(G(Z_j), \N(Z_j)^k), 
 \end{equation}
where $Z_j$ is given in (\ref{e:zjdef}).
 Define the {\em Shintani operator} $\Delta^{(k)}:K((z))^{\hd}\to K$ by
\begin{equation}\label{averagedelta}\Delta^{(k)} = \frac{(k!)^n}{n} \sum_{j=1}^{n} \Delta^{(k)}_j. \end{equation}
\end{definition}

\begin{remark} \label{r:shreg}
If $g \in K[[z]]$, then $\Delta^{(k)} g$ is simply $(k!)^n$ times the coefficient of $(z_1 \cdots z_n)^k$ in $g$.  Thus, the operator $\Delta^{(k)}$ extends the coefficient extraction operation from $K[[z]]$ to $K((z))^{\hd}$.
\end{remark}

The Shintani operator shares the following properties with the operation of taking the $(z_1\cdots z_n)^k$-coefficient of 
a regular power series.  The proof is  an elementary computation.

\begin{lemma}\label{L:indepofD}
Let $h\in K((z))^{\hd}$.  Then 
\begin{itemize} \item For  $d_1,\ldots,d_n\in K$, we have
$\Delta^{(k)}h(d_1z_1,\ldots,d_nz_n) = (d_1\cdots d_n)^k \Delta^{(k)}h(z_1,\ldots,z_n).$
\item For any permutation $\sigma$, we have
$\Delta^{(k)}h(z_{\sigma(1)},\ldots,z_{\sigma(n)}) =  \Delta^{(k)}h(z_1,\ldots,z_n).$
\end{itemize}
\end{lemma}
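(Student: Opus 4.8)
The plan is to establish both identities first for each of the individual operators $\Delta^{(k)}_j$ from \eqref{e:deltai} and then sum over $j$; the symmetrization in \eqref{averagedelta} is irrelevant to the first statement but is precisely what makes the second one true. Fix $h = g/p \in K((z))^{\hd}$ with $p$ powerful homogeneous of degree $d$. By Lemma~\ref{l:ppas} the series $h(Z_j)$ lies in $z_j^{-d}K[[z]]$, so $\Delta^{(k)}_j h = \coeff(h(Z_j),\N(Z_j)^k)$ makes sense, and the same holds after rescaling the variables by nonzero constants or permuting them, since those operations send powerful homogeneous polynomials to powerful homogeneous polynomials. The one preliminary computation I would record is that, from the definition \eqref{e:zjdef} of $Z_j$, one has $\N(Z_j) = z_j^{\,n}\prod_{i\neq j} z_i$; hence $\N(Z_j)^k$ is the monomial whose $z_j$-exponent is $nk$ and whose exponent in every other variable is $k$.

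For the scaling identity I would assume the $d_i$ are all nonzero (the only case needed in the sequel; when some $d_i = 0$ the statement is either vacuous or reduces to Remark~\ref{r:shreg}). Writing $h_d(z) = h(d_1z_1,\dots,d_nz_n)$, the key point is the substitution identity
\[
 h_d(Z_j)(z) \;=\; h(Z_j)(w), \qquad w_i = (d_i/d_j)\,z_i \ \ (i\neq j), \quad w_j = d_j\,z_j,
\]
which one checks simply by comparing the two vectors at which $h$ is evaluated on each side. Expanding $h(Z_j) = \sum_\alpha c_\alpha z^\alpha$ and substituting, the coefficient of $z^\alpha$ gets multiplied by $d_j^{\,\alpha_j-\sum_{i\neq j}\alpha_i}\prod_{i\neq j}d_i^{\,\alpha_i}$. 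Taking $\alpha$ to be the exponent vector of $\N(Z_j)^k$, i.e.\ $\alpha_j = nk$ and $\alpha_i = k$ for $i\neq j$, this factor equals $d_j^{\,nk-(n-1)k}\prod_{i\neq j}d_i^{\,k} = (d_1\cdots d_n)^k$. Hence $\Delta^{(k)}_j h_d = (d_1\cdots d_n)^k\,\Delta^{(k)}_j h$ for every $j$, and summing and applying \eqref{averagedelta} finishes the first part.

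For the permutation identity I would set $h^\sigma(z) = h(z_{\sigma(1)},\dots,z_{\sigma(n)})$ and $j' = \sigma^{-1}(j)$, and first prove the analogous substitution identity
\[
 h^\sigma(Z_j)(z) \;=\; \bigl(h(Z_{j'})\bigr)^{\sigma}(z),
\]
again by matching up coordinate vectors. Then I would observe that extracting from a series of the form $F(z_{\sigma(1)},\dots,z_{\sigma(n)})$ the coefficient of $\N(Z_j)^k$ (i.e.\ of the monomial with $z_j$-exponent $nk$ and all other exponents $k$) returns the coefficient in $F$ of the monomial with exponent $nk$ in the slot $\sigma^{-1}(j) = j'$ and $k$ elsewhere, which is exactly $\N(Z_{j'})^k$. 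Combining these two observations with $F = h(Z_{j'})$ gives $\Delta^{(k)}_j h^\sigma = \Delta^{(k)}_{\sigma^{-1}(j)}h$; thus the individual operators are merely permuted among themselves, so $\sum_j\Delta^{(k)}_j h^\sigma = \sum_j\Delta^{(k)}_j h$, which is the second claim.

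I do not expect a genuine obstacle: the argument is entirely bookkeeping with exponent vectors, and the only facts that deserve a line of justification are the well-definedness of the substituted Laurent series (Lemma~\ref{l:ppas}) and the elementary identity $\N(Z_j) = z_j^{\,n}\prod_{i\neq j}z_i$. The one point worth emphasizing is that no single $\Delta^{(k)}_j$ is permutation-invariant---only the averaged operator $\Delta^{(k)}$ is---so the sum over $j$ in \eqref{averagedelta} is genuinely needed for the second statement.
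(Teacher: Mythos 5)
Your computation is correct and is precisely the elementary exponent bookkeeping the paper has in mind (the paper omits the proof, calling it an elementary computation): the substitution identities $h_d(Z_j)(z)=h(Z_j)(w)$ and $h^\sigma(Z_j)=\bigl(h(Z_{j'})\bigr)^\sigma$ with $j'=\sigma^{-1}(j)$ are verified correctly, and they give $\Delta^{(k)}_j h_d=(d_1\cdots d_n)^k\Delta^{(k)}_j h$ and $\Delta^{(k)}_j h^\sigma=\Delta^{(k)}_{\sigma^{-1}(j)}h$, from which both claims follow on summing over $j$. Your restriction to nonzero $d_i$ (needed so that $h_d$ remains in $K((z))^{\hd}$) is the same implicit hypothesis required for the statement to make sense and is all that is used later, e.g.\ in Theorem~\ref{t:rational}, where the scalars have product $1$.
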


Finally, we may state the following theorem of Shintani:
\begin{theorem}[{\cite[Proposition 1]{Sh}}]  \label{t:shintani}
Let $C$ be a rational open cone,  $v \in \cV$, and $M\in \cM$ satisfying~\eqref{E:Mpos}.  
The function $\zeta(C, M, v, s)$ has a meromorphic continuation to $\C$ and satisfies
\[ \zeta(C , M , v,  -k) = \Delta^{(k)}h(C,v)(zM^t) \qquad \text{ for } k \in \Z_{\ge 0}.
\]
\end{theorem}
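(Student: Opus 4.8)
The plan is to realize $\zeta(C,M,v,s)$ as an $n$-fold Mellin transform of the cone generating function $h(C,v)(zM^t)$ and to identify its value at $s=-k$ with the coefficient-extraction operator $\Delta^{(k)}$ by means of a Hankel-contour argument. This is, at bottom, Shintani's proof of \cite[Proposition~1]{Sh} (building on Barnes) recast in the language of Section~\ref{s:svszf}; the point of the machinery $\cM$, $K((z))^{\hd}$, the substitutions $Z_j$ and the operator $\Delta^{(k)}$ is precisely to make the right-hand side of the asserted identity well-defined and literally equal to the residue that Shintani's argument produces.

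First I would reduce to a single cone piece. By the finite decomposition $\zeta(C,M,v,s)=\sum_{a\in\sP\cap(v+\Z^n)}Z(C,M,a,s)$ it suffices to understand each $Z(C,M,a,s)$. Writing $C=C(w_1,\dots,w_r)$ with integral generators and setting $b_i=(M^t a)_i$, $\mu_{ij}=(M^t w_j)_i$, the hypothesis~\eqref{E:Mpos} together with $M\in\cM$ forces $b_i>0$ and $\mu_{ij}>0$ for all $i,j$, and one has $Z(C,M,a,s)=\sum_{m\in(\Z_{\ge0})^r}\prod_{i=1}^n(b_i+\sum_j\mu_{ij}m_j)^{-s}$, a Barnes--Shintani zeta function convergent for $\real(s)>1$. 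Applying $A^{-s}=\Gamma(s)^{-1}\int_0^\infty u^{s-1}e^{-uA}\,du$ to each of the $n$ factors and summing the resulting geometric series in $m$ gives, for $\real(s)>1$,
\[
Z(C,M,a,s)=\frac{1}{\Gamma(s)^n}\int_{(\R_{>0})^n}(\prod_{i=1}^n u_i)^{s-1}\frac{e^{-\langle u,b\rangle}}{\prod_{j=1}^r(1-e^{-\langle u,\mu_j\rangle})}\,du ,
\]
with $\mu_j=(\mu_{1j},\dots,\mu_{nj})$. Summing over $a$, the integrands assemble into $h(C,v)(zM^t)$ evaluated at $z=-u$, so that $\zeta(C,M,v,s)=\Gamma(s)^{-n}\int_{(\R_{>0})^n}(\prod_i u_i)^{s-1}h(C,v)(-uM^t)\,du$ for $\real(s)>1$.

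Next I would continue meromorphically and evaluate at $s=-k$. Deforming each real integral $\int_0^\infty du_i$ to a Hankel path $\mathcal H_\varepsilon$ looping around the origin, by the standard manipulation in each variable (which introduces an explicit factor with poles at the integers and keeps the paths away from the zero loci of the factors $1-e^{-\langle u,\mu_j\rangle}$), one obtains a representation manifestly meromorphic in $s$ on all of $\C$, which gives the analytic continuation. At $s=-k$ the multivalued factors $(-u_i)^{-s}$ degenerate to polynomials, the Hankel paths collapse onto small loops, and the value is computed by an iterated residue at $u=0$ of the integrand at $s=-k$, namely of $(\prod_i u_i)^{-k-1}h(C,v)(-uM^t)$. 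Here the structure of $\Delta^{(k)}$ enters: $h(C,v)(zM^t)$ is not holomorphic at $z=0$ when $n>1$, so this iterated residue is only meaningful after the substitution $z\mapsto Z_j$ of~\eqref{e:zjdef} for a chosen distinguished index $j$; by Lemma~\ref{l:ppas} this substitution places $h(C,v)(zM^t)$ in $z_j^{-d}K[[z]]$, and the residue then reads off the coefficient of $\N(Z_j)^k$, i.e.\ $\Delta^{(k)}_j h(C,v)(zM^t)$. Carrying out the bookkeeping of $\Gamma$-factors and signs near $s=-k$ supplies the factor $(k!)^n$, and symmetrizing over the choice of $j$ --- which is exactly the averaging $\frac1n\sum_j$ in the definition~\eqref{averagedelta}, compatible with the permutation symmetry of Lemma~\ref{L:indepofD} --- yields $\zeta(C,M,v,-k)=\Delta^{(k)}h(C,v)(zM^t)$.

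I expect the main obstacle to be the execution of this last step: rigorously justifying the deformation of the $n$-dimensional real integral into the product of Hankel contours (convergence, and the interchange of the $n$ contour integrations and of the successive residue computations), and checking that the iterated residue at $u=0$ coincides on the nose with the algebraically defined operator $\Delta^{(k)}$ --- in particular that extracting the residue ``one coordinate direction at a time'' via $z\mapsto Z_j$ is legitimate and, after the symmetrization, independent of the order chosen. This is the delicate heart of Shintani's argument in \cite{Sh}; everything else --- the reduction to cone pieces, the Mellin representation, and the final assembly --- is elementary bookkeeping made possible by the algebraic preparations of Section~\ref{s:svszf}.
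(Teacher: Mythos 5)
The paper gives no proof of Theorem~\ref{t:shintani}: it is quoted directly from Shintani (\cite[Proposition 1]{Sh}), so your proposal has to be measured against Shintani's original argument. Your setup is exactly his: the decomposition over $a\in\sP\cap(v+\Z^n)$, the positivity of $M^ta$ and $M^tw_j$ forced by \eqref{E:Mpos} and $M\in\cM$, and the Mellin representation $\Gamma(s)^n\zeta(C,M,v,s)=\int_{(\R_{>0})^n}(\prod_i u_i)^{s-1}h(C,v)(-uM^t)\,du$ for $\real(s)\gg0$ are all correct and are the right reduction. The gap is in the continuation step. Deforming all $n$ integrals to a product of Hankel contours and then ``collapsing onto small loops'' to read off an iterated residue at $u=0$ does not work: the singular locus of $h(C,v)(-uM^t)$ contains the hyperplanes $\langle u,M^tw_j\rangle=0$, which pass through the origin, so every polydisc of small loops meets it, and the would-be iterated residue of $(\prod_i u_i)^{-k-1}h(C,v)(-uM^t)$ is precisely the undefined object that the ``nonsense identity chain'' preceding \eqref{e:shintanibogus} warns about. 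Shintani's continuation avoids this by \emph{not} deforming in all variables: he splits $(\R_{>0})^n$ into the $n$ sectors $E_j=\{u:u_j\ge u_i\ \forall i\}$, substitutes $u_i=u_jt_i$ with $t_i\in[0,1]$ --- this change of variables is the true origin of the substitution $Z_j$ in \eqref{e:zjdef} --- and continues a \emph{one-variable} Mellin transform in $u_j$ (only $u_j$ goes around a loop contour), Lemma~\ref{l:ppas} being the algebraic shadow of the fact that after this substitution the integrand is $u_j^{-d}$ times something regular.

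This also exposes the second, related problem: your final ``symmetrizing over the choice of $j$'' cannot be an afterthought. The individual quantities $(k!)^n\Delta^{(k)}_jh(C,v)(zM^t)$ are in general \emph{not} equal to each other, nor to $\zeta(C,M,v,-k)$; each one is the value at $s=-k$ of the contribution of the single sector $E_j$, and the factor $\frac1n$ in \eqref{averagedelta} comes from the residue of the $\frac{1}{ns+\cdots}$-type pole produced by the $u_j$-integral on that sector, while the sum over $j$ is the sum over sectors. Lemma~\ref{L:indepofD} concerns invariance of the averaged operator $\Delta^{(k)}$ under permuting the $z$-variables, not independence of $\Delta^{(k)}_j$ on $j$, so it cannot be used to justify computing with one $j$ and then averaging. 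So while your blueprint (Mellin transform, continuation, value at $s=-k$ as coefficient extraction) is indeed Shintani's, the mechanism you propose for the delicate step would fail as stated; replacing the $n$-fold Hankel deformation by the sector decomposition and the single-variable contour argument is not optional bookkeeping but the actual content of the proof.
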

We observe that by  Lemma~\ref{L:indepofD}, the coefficient $\Delta^{k}h(C,v)(zM^t)$ depends only on the image of $M$ in $\cM/\cD$. 

\subsection{The power series-valued Shintani cocycle} \label{s:pscoc}

In this section we define the Shintani cocycle in the form that will be most useful for our desired applications; in particular, the cocycle will take values in a module $\cF$ for which it can be compared to the Eisenstein cocycle defined by Sczech in \cite{sczech} and studied in \cite{pcsd}.

\bigskip

  The set $\cM$ defined in  Section~\ref{s:svszf} is  naturally a left $\Gamma$-set via the action of left multiplication. 
Let $\cF$ 
denote the real vector space of functions \[ f: \cM \times \cQ \times \cV \lra \R((z))^{\hd} \]  satisfying 
the following distribution relation
for each nonzero integer $\lambda$: 
\begin{equation} \label{e:dist}
 f(M, Q, v) = \sgn(\lambda)^n \sum_{\lambda w = v} f(\lambda M, \lambda^{-1} Q, w). 
\end{equation}
  Define a left $\Gamma$-action on $\cF$ as follows.
Given $\gamma \in \Gamma$, choose a nonzero scalar multiple $A = \lambda \gamma$ with $\lambda \in \Z$ such that $A \in M_n(\Z)$.  
For $f \in \cF$, define
\begin{equation} \label{e:vaction}
 (\gamma f)(M, Q, v) =  \sum_{r \in \Z^n / A \Z^n} \sgn(\det A)f(A^t M, A^{-1} Q , A^{-1}(r + v)). 
 \end{equation}
  The distribution relation (\ref{e:dist}) implies that (\ref{e:vaction}) does not depend on the auxiliary choice of $\lambda$.  Note that the action of $\Gamma$ on $\cF$ factors through $\PGL_n(\Q)$. The Solomon--Hu pairing  satisfies  the identity \[ h(\gamma C, v)(zM^t)=\gamma h(C,v)(zM^t)\] for any rational cone $C.$

We can use $\Phi_{\mathrm{Sh}}$ to define a cocycle $\Psi_{\mathrm{Sh}} \in Z^{n-1}(\Gamma, \cF)$ by
\begin{equation} \label{e:psisdef}
 \Psi_{\mathrm{Sh}}(A, M, Q, v) :=   h(\Phi_{\mathrm{Sh}}(A)(Q), v)(z M^t).
\end{equation}
Here and in the sequel we simply write $\Psi_{\mathrm{Sh}}(A, M, Q, v)$ for $\Psi_{\mathrm{Sh}}(A_1, \dotsc, A_n)(M, Q, v)$ with
$A = (A_1, \dotsc, A_n) \in \Gamma^n$.
Our cocycle $\Psi_{\mathrm{Sh}}$ satisfies the following rationality result.
\begin{theorem} \label{t:rational} 
 The value $\Delta^{(k)}  \Psi_{\mathrm{Sh}}(A, M, Q, v)$ lies in the
field $K$ generated over $\Q$ by the coefficients of the polynomial $f_M(x)$.
\end{theorem}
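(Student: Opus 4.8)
\emph{Reduction to a single cone.}
Since $h(-,v)$ and $\Delta^{(k)}$ are $\Q$-linear it suffices to treat one cone. Write $A=(A_1,\dots,A_n)$, let $\sigma_i$ be the first column of $A_i$, and assume the $\sigma_i$ are linearly independent (otherwise $\Psi_{\mathrm{Sh}}(A,M,Q,v)=0$). By Section~\ref{s:qperturb}, $c_Q(\sigma_1,\dots,\sigma_n)=\sum_{I\subseteq\{1,\dots,n\}}\weight(C_I)\,{\mathbf 1}_{C_I}$, where $C_I=C(\sigma_i:i\in I)$ is a rational simplicial cone and $\weight(C_I)\in\{0,1\}$ is given by~\eqref{e:weight}. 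Hence
\[
\Delta^{(k)}\Psi_{\mathrm{Sh}}(A,M,Q,v)=\sign(\det(\sigma_1,\dots,\sigma_n))\sum_{I}\weight(C_I)\,\Delta^{(k)}h(C_I,v)(zM^t),
\]
and it is enough to prove $\Delta^{(k)}h(C,v)(zM^t)\in K$ for an arbitrary rational open simplicial cone $C$, which we now fix together with $v$ and $k$.

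\emph{A rational function of the entries of $M$.}
Regard the $m_{ij}$ as indeterminates. Following the proof of Lemma~\ref{L:hd}, write $h(C,v)(zM^t)=\sum_a f_a/p$ with $p=L_1\cdots L_r$, $L_i(z)=zM^t\sigma_i$, and $f_a\in\Q(\{m_{ij}\})[[z]]$; since $M\in\cM$ each $L_i$ is dense, its $z_j$-coefficient being $(M^t\sigma_i)_j\neq 0$. Substituting $Z_j$ for $z$ and invoking Lemma~\ref{l:ppas}, one checks that every Laurent coefficient of $(f_a/p)(Z_j)$ in $z_j$ is a rational function of the $m_{ij}$ over $\Q$ whose denominator is a power of $\prod_i(M^t\sigma_i)_j$. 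Extracting the $\N(Z_j)^k$-coefficient as in~\eqref{e:deltai}, summing over $a$, and averaging over $j$ as in~\eqref{averagedelta}, we conclude that
\[
R:=\Delta^{(k)}h(C,v)(zM^t)
\]
is a rational function of the $m_{ij}$ over $\Q$ whose pole locus lies inside the union of the hyperplanes $(M^t\sigma_i)_j=0$; in particular $R$ is regular at every element of $\cM$, and (as there $(M^t\sigma_i)_j>0$) at every $M$ satisfying the positivity condition~\eqref{E:Mpos} for $C$.

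\emph{Symmetry and conclusion.}
By Lemma~\ref{L:indepofD} we have $R(MD)=(\det D)^kR(M)$ for diagonal matrices $D$ and $R(MP)=R(M)$ for permutation matrices $P$, so $R$ is invariant under the right action on $M_n$ of the group $H$ generated by the diagonal matrices of determinant $1$ and the permutation matrices. The morphism $M\mapsto f_M=\prod_{l=1}^n(xM)_l$ is $H$-invariant, and over a generic form $f$ in its image the fibre $\{M':f_{M'}=f\}$ is a single $H$-orbit — one reads off the linear factors of $f$, which are the columns of $M'$ up to order and up to scaling with product $1$. Thus $M\mapsto f_M$ realizes the quotient of $M_n$ by $H$ birationally onto the variety of forms that split into a product of $n$ linear factors, whose function field is the fraction field of the ring $\Q[\text{coefficients of }f_M]$. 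Therefore $R$ lies in this fraction field, and since $R$ is regular at our given $M\in\cM$ with pole divisor disjoint from $\cM$, the value $R(M)$ lies in $K$. This proves the theorem.

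\emph{Where the difficulty lies.}
I expect the main obstacle to be the invariant-theoretic step identifying the $H$-invariant rational functions on $M_n$ with the fraction field of $\Q[\text{coefficients of }f_M]$, together with the bookkeeping needed to descend the value $R(M)$ to $K$ for the particular $M$ at hand (this is exactly where the explicit control on the pole locus of $R$ obtained in the second step is used). A cheaper route to the first point avoids invariant theory: for $M$ in the nonempty open positivity locus of $C$, Theorem~\ref{t:shintani} identifies $R(M)$ with the Shintani zeta value, which is the value at $s=-k$ of the meromorphic continuation of $\zeta(C,M,v,s)=\sum_x f_M(x)^{-s}$ and hence depends only on $f_M$; as $R$ is a rational function of $M$ constant on the fibres of $M\mapsto f_M$ over this Zariski-dense set, it descends to a rational function of the coefficients of $f_M$, and one concludes as above.
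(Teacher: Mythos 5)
Your first two steps are sound: the reduction to a single rational cone is the same as in the paper, the fact that $R:=\Delta^{(k)}h(C,v)(zM^t)$ is the specialization at $M$ of a rational function of the entries $m_{ij}$ over $\Q$ with poles only along the hyperplanes $(M^t\sigma_i)_j=0$ is a quantitative version of the proof of Lemma~\ref{L:hd}, and the symmetries $R(MD)=(\det D)^kR(M)$, $R(MP)=R(M)$ are exactly Lemma~\ref{L:indepofD}, which is also the paper's key input. The genuine gap is the final sentence of your third step: from ``$R$ lies in the fraction field of $\Q[\text{coefficients of }f_M]$'' together with ``$R$ is regular at the given $M\in\cM$'' you cannot conclude $R(M)\in K$. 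The point $c_0$ given by the coefficients of $f_M$ may lie in the indeterminacy locus of the descended function: every presentation $R=P(c)/Q(c)$ over $\Q$ could have $P(c_0)=Q(c_0)=0$, and regularity of the pullback at the single point $M$ then gives no way to read off the value from $c_0$. The inference is false in general: for $\G_m$ acting on $\A^4$ with weights $(1,1,-1,-1)$, the invariant function $x_1y_1/(x_1y_2)=y_1/y_2$ lies in the fraction field of the invariants $x_iy_j$, is regular at $(0,0,a,b)$ where all invariants vanish (so their values generate $\Q$), yet its value $a/b$ is arbitrary. Since your $M$ (e.g.\ the matrix built from a totally real field) is special, you cannot invoke genericity, and the pole control you allude to does not by itself repair the step; what it actually buys you is that the whole fibre $\{M': f_{M'}=f_M\}$, which by unique factorization consists of the matrices obtained from $M$ by permuting columns and rescaling them with product of scalars $1$, avoids the pole hyperplanes.

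Once you use that last observation, the repair collapses onto the paper's own (much shorter) proof and the invariant-theoretic detour becomes unnecessary: for any automorphism $\tau$ of $\C$ fixing $K$ pointwise, unique factorization of $f_M$ into linear forms gives $\tau(M)=MN$ with $N$ a monomial matrix whose nonzero entries have product $1$; since $R$ is a universal rational expression over $\Q$ in the $m_{ij}$ whose denominator does not vanish at $M$ or at $MN$, we get $\tau(R(M))=R(\tau(M))$, and Lemma~\ref{L:indepofD} gives $R(MN)=R(M)$; hence $R(M)$ is fixed by $\mathrm{Aut}(\C/K)$ and lies in $K$. This is precisely the argument in the paper, which obtains the needed Galois-equivariance from Lemma~\ref{L:hd} rather than from your explicit pole computation. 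In particular neither the birational quotient of $M_n$ by $H$ nor the analytic detour through Theorem~\ref{t:shintani} in your ``cheaper route'' is needed (and both would still face the same evaluation problem at the non-generic point $c_0$); the missing evaluation step, carried out correctly, already constitutes the entire proof.
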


\begin{proof}   We will show that $\Delta^{(k)}(h(C, v)(zM^t))$ lies in $K$ for any rational cone $C$.
By the definition of $f_M(x)$, any automorphism of $\C$ fixing $f_M(x)$ permutes the columns of $M$ up to scaling each
column by a factor $\lambda_i$ such that $\prod_{i=1}^{n} \lambda_i = 1.$  Therefore it suffices to prove that
our value is invariant under each of these operations, namely permuting the columns or scaling the columns by factors whose product is 1.
Now, in the tuple $zM^t$, permuting the columns $M$ 
has the same effect as permuting the variables $z_i$; and scaling the $i$th column of $M$ by 
$\lambda_i$ has the same effect as scaling  $z_i$ by $\lambda_i$.  The desired result then follows from
Lemma~\ref{L:indepofD}.
\end{proof}

\subsection{Special values of zeta functions} \label{s:svz}

Let $F$ be a totally real field, and let $\fa$ and $\ff$ be relatively prime integral ideals of $F$.
 The goal of the remainder of this section is to express the special values  $\zeta_{\ff}(\fa, -k)$ for integers $k \ge 0$ in terms of the cocycle $\Psi_{\mathrm{Sh}}$.  We invoke the notation of Section~\ref{rs:specialize}; in particular we fix an embedding $J: F \hookrightarrow \R^n$.
 
 \bigskip

Let $\cR = \Z[\cM/\cD \times \cQ \times \cV]$ denote the free abelian group on the set $\cM/\cD \times \cQ \times \cV$, which is naturally endowed 
with a left $\Gamma$-action by the action on the sets $\cM/\cD, \cQ,$ and $\cV$.
There is a cycle 
$\fZ_{\ff}(\fa) \in H_{n-1}(\Gamma,\cR)$ associated to our totally real field $F$ and integral ideals $\fa, \ff$.
The cycle consists of the data of  elements $\cA \in \Z[\Gamma^n],$ $M \in \cM/\cD$, $Q \in \cQ$, and $v \in \cV$, defined as
follows.

Fix a $\Z$-module basis $w = (w_1, \dotsc, w_n)$ for $\fa^{-1}\ff$. 
 Let $\{\epsilon_1, \dotsc,  \epsilon_{n-1}\}$ denote a basis of the group $U$ of totally positive units of $F$ congruent to 1 modulo 
$\ff$.  Following (\ref{e:alphadef}), define
\begin{equation} \label{e:Adef2}
 \cA(\epsilon_1, \dotsc, \epsilon_{n-1}) = (-1)^{n-1} w_\epsilon \sum_{\sigma \in S_{n-1}} \sign(\sigma) [(\rho_w(f_{1, \sigma}), \dotsc, \rho_w(f_{n, \sigma}))] \in \Z[\Gamma^n]. 
 \end{equation}
Here $\rho_w$ is the right regular representation of $U$ on $w$ defined in (\ref{e:rhowdef}), and $w_\epsilon$ is the orientation associated
to $J(\epsilon)$ as in (\ref{e:wedef}).

Let $M \in \cM/\cD$ be represented by the matrix \begin{equation} \label{e:mdef}
 \N(\fa)^{1/n} (J_j(w_i))_{i,j=1}^{n} =  \N(\fa)^{1/n} J(w)^t. \end{equation}
Note that $f_M \in \Q[x_1, \dotsc, x_n]$ is the homogeneous polynomial of degree $n$ given by the norm:
\begin{equation} \label{e:Pdef2}
f_M(x_1, \dotsc, x_n) = \N(\fa) \cdot\N(w_1x_1 + \cdots + w_nx_n).
 \end{equation}
Let $Q$ be the image under the embedding $J_n: F \hookrightarrow \R$ of the dual basis to $w$ under the trace pairing on $F$, 
as in (\ref{e:qdef}) :
\begin{equation} \label{e:Qdef2}
 Q = (0, \dotsc, 0, 1)J(w)^{-t} = (J_n(w_1^*), \dotsc,  J_n(w_n^*)),  
 \end{equation}
where $\Tr(w_iw_j^*) = \delta_{ij}$.
Define the column vector \begin{equation} \label{e:vdef2} v = (\Tr(w_1^*), \dotsc, \Tr(w_n^*)), \quad \text{ so that } \quad
1 = v_1 w_1 + v_2w_2 + \cdots + v_nw_n.\end{equation}
Dot product with $(w_1, \dotsc, w_n)$ provides a bijection
$ v + \Z^{n} \longleftrightarrow 1 + \fa^{-1}\ff.$
 
 We now define $\fZ_{\ff}(\fa) \in H_{n-1}(\Gamma, \cR)$ 
 to be the homology class represented by the homogeneous $(n-1)$-cycle
 \[ \tilde{\fZ} = \cA \otimes [(M, Q, v)] \in \Z[\Gamma^{n}] \otimes \cR. \]
The fact that $\tilde{\fZ}$ is a cycle follows from  \cite[Lemma 5]{sczech} as in (\ref{e:alphadef}) 
using the fact that the elements $M$, $Q$, and $v$ are invariant under the action of $\rho_w(U)$.

For each integer $k \ge 0$, the canonical $\Gamma$-invariant map $\cF \otimes \cR \rightarrow \R$ given by
$f \otimes [(M, Q, v)] \mapsto \Delta^{(k)}f(M, Q, v)$ is well-defined by Lemma~\ref{L:indepofD}, and induces via cap product a pairing
\[ \langle \ , \ \rangle_k: H^{n-1}(\Gamma, \cF) \times H_{n-1}(\Gamma, \cR) \longrightarrow \R. \]
Here $\R$ has the trivial $\Gamma$-action.\symbolfootnote[2]{To make contact with the notation of the introduction, note that for each integer $k$ we obtain a map $\eta_k: \cR \ra \cN^\vee$, i.e.\ a pairing $\cN \times \cR \rightarrow \R$,
by $(\Phi, [(M, Q, v)]) \mapsto \Delta^{(k)} h(\Phi(Q), v)(zM^t)$.  The class denoted $\fZ_k$ in the introduction
is the image of $ \fZ_{\ff}(\fa)$ under the map on homology induced by $\eta_k$.}

\begin{theorem} \label{t:psiszeta} We have
$\zeta_{F, \ff}(\fa, -k) = \langle \Psi_{\mathrm{Sh}}, \fZ_{\ff}(\fa) \rangle_k \in \Q.$
\end{theorem}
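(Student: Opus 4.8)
The plan is to unwind the cap product on the left into a $\Z$-linear combination of Shintani cone zeta values over the cones of a signed fundamental domain, and to identify that domain — after transporting from the archimedean picture to $w$-coordinates — with the one produced by Theorem~\ref{t:ddf}. First I would unwind $\langle\Psi_{\mathrm{Sh}},\fZ_{\ff}(\fa)\rangle_k$. The class $\fZ_{\ff}(\fa)$ is represented by $\tilde{\fZ}=\cA\otimes[(M,Q,v)]$ with $\cA$ the explicit cycle (\ref{e:Adef2}) and $M,Q,v$ as in (\ref{e:mdef})--(\ref{e:vdef2}), and the pairing $\langle\ ,\ \rangle_k$ is cap product followed by $f\otimes[(M,Q,v)]\mapsto\Delta^{(k)}f(M,Q,v)$. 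Since $[\Phi_{\mathrm{Sh}}]=[\Phi_{\mathrm{Sh},w_1}]$ in $H^{n-1}(\Gamma,\cN)$ (Section~\ref{rs:specialize}, via \cite[Lemma 4]{sczech}) and this passes through the $\Gamma$-equivariant assignment $\Phi\mapsto\big((M,Q,v)\mapsto h(\Phi(Q),v)(zM^t)\big)$ used to define $\Psi_{\mathrm{Sh}}$ in (\ref{e:psisdef}), I may evaluate the pairing with the cocycle attached to $\Phi_{\mathrm{Sh},w_1}$. Feeding $\cA$ into this cocycle and applying (\ref{e:jw}) term by term together with (\ref{e:tphidef})--(\ref{e:alphadef}), the cocycle--cycle contraction becomes $J(w)^*(\phi_U\cap\alpha_U)$, which at the cochain level equals $\sum_{\sigma\in S_{n-1}}w_\sigma\,c_Q(\tau_{1,\sigma},\dots,\tau_{n,\sigma})$ with $\tau_{i,\sigma}=J(w)^{-1}J(f_{i,\sigma})\in\Q^n$ and the $w_\sigma$ the orientation signs of Theorem~\ref{t:ddf}; here I use that $J(w)^*$ carries wedges to wedges (so the $\cL_\R$-indeterminacy in (\ref{e:jw}) is harmless) and that $\Delta^{(k)}\circ h$ is $\Z$-linear on $\cK/\cL$. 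Hence
\[
\langle\Psi_{\mathrm{Sh}},\fZ_{\ff}(\fa)\rangle_k=\Delta^{(k)}h\Big(\textstyle\sum_{\sigma\in S_{n-1}}w_\sigma\,c_Q(\tau_{1,\sigma},\dots,\tau_{n,\sigma}),\,v\Big)(zM^t).
\]

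Next I would apply Theorem~\ref{t:ddf}. The isomorphism $J(w)\colon\R^n\to\R^n$ carries $\tau_{i,\sigma}$ to $J(f_{i,\sigma})\in(\R_{>0})^n$, carries $Q$ to $e_n$ by (\ref{e:qdef}), carries $v+\Z^n$ to $J(1+\fa^{-1}\ff)$, and intertwines the $\rho_w(U)$-action with componentwise multiplication by $J(U)$. The hypotheses of Theorem~\ref{t:ddf} hold for $J(U)$ with basis $\{J(\epsilon_i)\}$ and perturbation $e_n$ (the required genericity of $e_n$ is verified in Section~\ref{rs:specialize}), so $\sum_\sigma w_\sigma\,c_{e_n}(J(f_{1,\sigma}),\dots,J(f_{n,\sigma}))$ is the characteristic function of a signed fundamental domain for $J(U)$ on $(\R_{>0})^n$; transporting back, $\sum_\sigma w_\sigma\,c_Q(\tau_{1,\sigma},\dots,\tau_{n,\sigma})$ is the characteristic function of a signed fundamental domain for $\rho_w(U)$ acting on $1+\fa^{-1}\ff$. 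Moreover, because the $f_{i,\sigma}$ are products of totally positive units, every open cone and every boundary face retained by the $Q$-perturbation lies in the region $\{y:\sum_i y_iw_i\gg 0\}$ (cf.\ Remark~\ref{r:posorthant}), and on that region the positivity condition (\ref{E:Mpos}) holds for the matrix $M$ of (\ref{e:mdef}) since $M^ty=\N(\fa)^{1/n}J\big(\sum_i y_iw_i\big)$.

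Then I would compare with the zeta function. For $\real(s)>1$ the classical dictionary identifies $\{\fb:\fb\sim_\ff\fa\}$ with the set of $\rho_w(U)$-orbits of totally positive elements of $1+\fa^{-1}\ff$, under which $\N\fb=\N\big(\sum_i y_iw_i\big)\N(\fa)=f_M(y)$ by (\ref{e:Pdef2}) and (\ref{e:vdef2}). Summing $f_M(y)^{-s}$ over the signed fundamental domain of the previous paragraph — legitimate because $f_M$ is $\rho_w(U)$-invariant and the $U$-sum is locally finite by the compactness argument in the proof of Theorem~\ref{t:ddf} — yields $\zeta_{F,\ff}(\fa,s)=\sum_\sigma w_\sigma\,\zeta\big(c_Q(\tau_{1,\sigma},\dots,\tau_{n,\sigma}),M,v,s\big)$ for $\real(s)>1$, where $\zeta(c,M,v,s)$ denotes Shintani's cone zeta function extended $\Z$-linearly from genuine open cones to $c\in\cK$. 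Applying Theorem~\ref{t:shintani} separately to each open cone in the decomposition of $c_Q(\tau_{i,\sigma})$ (valid by (\ref{E:Mpos})), each summand continues meromorphically to $\C$ with value $\Delta^{(k)}h(c_Q(\tau_{i,\sigma}),v)(zM^t)$ at $s=-k$ (using also that $h$ annihilates $\cL$); by uniqueness of meromorphic continuation the displayed identity holds on all of $\C$, and at $s=-k$ it coincides with the first-step expression for $\langle\Psi_{\mathrm{Sh}},\fZ_{\ff}(\fa)\rangle_k$. Finally, this common value lies in $\Q$ by Theorem~\ref{t:rational}: by (\ref{e:Pdef2}) the polynomial $f_M$ has rational coefficients, so the field $K$ there is $\Q$.

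The step I expect to be the main obstacle is the reconciliation of the three coordinate systems in play — the field $F$, its archimedean image under $J$, and the $w$-coordinates on $\Q^n$ — and in particular the verification that the orientation signs built into $\Phi_{\mathrm{Sh}}$ and into the cycle $\cA$ (the factors $(-1)^{n-1}$, $w_\epsilon$, $\sign(\sigma)$, and the various $\sign\det$'s) combine to reproduce exactly the weights $w_\sigma$ of Theorem~\ref{t:ddf}, so that one obtains the Diaz y Diaz--Friedman domain itself and not its negative; this is precisely what the normalizations (\ref{e:Adef2}), (\ref{e:mdef}), (\ref{e:Qdef2}), (\ref{e:vdef2}) are designed to pin down. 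A secondary technical point is the termwise use of Theorem~\ref{t:shintani} and of meromorphic continuation for the half-open cones $C_Q(\tau_{i,\sigma})$, which reduces immediately to the open-cone case once these are written as finite $\Z$-combinations of open cones as in Section~\ref{s:bookkeeping}.
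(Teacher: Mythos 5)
Your proposal is correct and follows essentially the same route as the paper: represent the pairing via $\Phi_{\mathrm{Sh},w_1}$, use the identity (\ref{e:jw}) to identify the contraction against $\cA$ with the characteristic function of the Diaz y Diaz--Friedman signed fundamental domain from Theorem~\ref{t:ddf} (transported by $J(w)^{-1}$, with (\ref{E:Mpos}) checked via Remark~\ref{r:posorthant} and (\ref{e:mdef})), then apply Theorem~\ref{t:shintani} and conclude rationality from Theorem~\ref{t:rational}. Your extra remarks on the $\cL_\R$-indeterminacy being killed by $h$ and on reducing half-open cones to $\Z$-combinations of open cones are points the paper treats implicitly, not a different argument.
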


The rationality of $\zeta_{F, \ff}(\fa, -k)$ is a celebrated  theorem of Klingen and Siegel (see \cite{oda} for  a nice survey of the  history  of  various investigations  on these special values).

The proof we have outlined here is a cohomological reformulation of Shintani's original argument, with the added
benefit that  our definition of $\fZ_{\ff}(\fa)$ gives an explicit signed fundamental domain.

\begin{proof}  Let $U$ denote the group of totally positive units of $F$ congruent to 1 modulo $\ff$,
and let $D = \sum_i a_i C_i$ denote a signed fundamental domain for the action
of $U$ on the totally positive orthant of $\R^n$ (where as in Section~\ref{rs:specialize}, $u \in U$ acts by
componentwise multiplication with $J(u)$).  Then for $\real(s) \gg 0$,
\begin{align}
 \zeta_{F, \ff}(\fa, s) &= \sum_{\stack{\fb \subset \cO_F}{\fb \sim_{\ff} \fa}} \frac{1}{\N\fb^s} 
 =  \sum_{\{y \in 1 + \fa^{-1}\ff, \ y \gg 0\}/U} \frac{1}{(\N\fa \N y)^s} \qquad (\fa^{-1}\fb = (y)) \nonumber \\
 &=  \sum_{\stack{y \in 1 + \fa^{-1}\ff}{J(y) \in D}}  \frac{1}{(\N\fa \N y)^s}. \label{e:yeq}
\end{align}
Here we use the shorthand $\sum_{J(y) \in D}$ for $\sum_i a_i \sum_{J(y) \in C_i}$.
Now Theorem~\ref{t:ddf} implies that, using the notation of  (\ref{e:tphidef}) and 
 (\ref{e:alphadef}), the function
$\phi_{U}(\alpha(\epsilon_1, \dotsc, \epsilon_{n-1}))
$
is the characteristic function ${\mathbf 1_D}$ of such a signed fundamental domain $D$
 for the action of $U$ on $(\R_{>0})^n$.  
 (Recall from Definition~\ref{d:sfd} that if $D = \sum a_i C_i$ is a signed fundamental domain then
 ${\mathbf 1}_D := \sum a_i {\mathbf 1}_{C_i}$.)
 Therefore (\ref{e:jw}) implies that
\[ \Phi_{\mathrm{Sh}, w_1}(\cA, Q) = {\mathbf 1}_{J(w)^{-1} D}. \]

Note that for an element $x \in F$, the vector $v = J(w)^{-1} J(x) \in \Q^n$ satisfies $x = w \cdot v$, where $w = (w_1, \dotsc, w_n)$.
Therefore $J(w)^{-1}D$ consists of rational cones and
\begin{align}
  \langle \Psi_{\mathrm{Sh}}, \fZ_{\ff}(\fa) \rangle_k 
 &=  \Delta^{(k)} \Psi_{\mathrm{Sh}}(\cA, M, Q, v) \nonumber \\
 & = \Delta^{(k)} h(\Phi_{\mathrm{Sh}, w_1}(\cA, Q), v)(zM^t) \nonumber \\
 &= \Delta^{(k)} h({\mathbf 1}_{J(w)^{-1}D}, v)(zM^t) \nonumber \\ 
&= \zeta(J(w)^{-1}D, M, v,- k)  \label{e:zjw}
\end{align}
by Theorem~\ref{t:shintani}.  Here $\Phi_{\mathrm{Sh}, w_1}$ was defined in (\ref{e:phishx}), and may be substituted for
$\Phi_{\Sh}$ since it represents the same cohomology class.
Note also that~\eqref{E:Mpos} is satisfied for each pair $(J(w)^{-1}C_i,M)$ by the definition of $M$ in (\ref{e:mdef}) and the fact that $C_i \subset (\R_{>0})^n$ (which in turn was explained in Remark \ref{r:posorthant}).
 By definition, we have for $\real(s)$ large enough:
\begin{align}
 \zeta(J(w)^{-1}D, M, v, s) &= \sum_{x \in J(w)^{-1}D \cap v + \Z^n} \frac{1}{f_M(x)^s}  \nonumber \\
 &= \sum_{\stack{y \in 1 + \fa^{-1}\ff}{J(y) \in D}} \frac{1}{(\N\fa \N y)^s}, \label{e:jwy}
 \end{align}
 where the last equation uses the substitution $y = w \cdot x$ and (\ref{e:Pdef2}). 
 Comparing (\ref{e:yeq}), (\ref{e:zjw}), and (\ref{e:jwy}) yields the desired equality
 $\zeta_{F, \ff}(\fa, -k) =  \langle \Psi_{\mathrm{Sh}}, \fZ_{\ff, \fa} \rangle_k$.
	
	Finally, the rationality of $\langle \Psi_{\mathrm{Sh}}, \fZ_{\ff}(\fa) \rangle_k =  \Delta^{(k)} \Psi_{\mathrm{Sh}}(\cA, M, Q, v)$
	follows from Theorem~\ref{t:rational}, since $f_M(x)$ has rational coefficients.
\end{proof}

\section{Comparison with the Sczech Cocycle}  \label{s:compare}

In this section we prove that the Shintani cocycle $\Psi_{\mathrm{Sh}}$ defined 
in Section~\ref{s:pscoc} is cohomologous to the one defined by Sczech in \cite{sczech}.
We begin by recalling
the definition of Sczech's cocycle.  The reader is referred to \cite{pcsd}
or \cite{sczech} for a lengthier discussion of Sczech's construction.

\subsection{The Sczech cocycle}

For $n$ vectors $\tau_1, \dotsc, \tau_n \in \C^n$, define a rational function of a variable $x \in \C^n$ by
\begin{equation}\label{E:sczechf} f(\tau_1, \dotsc, \tau_n)(x) = \frac{\det(\tau_1, \dotsc, \tau_n)}{ \langle x, \tau_1 \rangle \cdots \langle x, \tau_n \rangle}.
\end{equation}
The function $f$ satisfies the cocycle relation (see \cite[Lemma 1, pg.\ 586]{sczech})
\begin{equation} \label{e:fcoc}
 \sum_{i=0}^{n} (-1)^i f(\tau_0, \dotsc, \hat{\tau}_i, \dotsc, \tau_n) = 0. 
 \end{equation}
Consider $A = (A_1, \dotsc, A_n) \in \Gamma^n$ and $x \in \Z^n - \{0\}$.
For $i = 1, \dotsc, n$, let $\varpi_i = \varpi_i(A, x)$ denote the  leftmost column of $A_i$ that is not orthogonal
to $x$.  Let $v \in \cV = \Q^n/\Z^n$.  Sczech considers the sum
\begin{equation} \label{e:sczsum}
 \sum_{x \in \Z^n - \{0\}} e(\langle x, v \rangle) f(\varpi_1, \dotsc, \varpi_n)(x), \end{equation}
where $e(u) := e^{2 \pi i u}$.  Although the definition of  $\varpi_i$ ensures that each summand in (\ref{e:sczsum})
is well-defined, the sum itself is not absolutely convergent.  To specify a method of summation,  Sczech 
introduces a vector $Q \in \cQ$ and defines the $Q$-summation
\begin{align}
 \tilde{\Psi}_{\mathrm{Z}}(A, Q, v) &=  (2 \pi i)^{-n} \!\!\!\!\sum_{x \in \Z^n - \{0\}}\!\!\! e(\langle x, v \rangle) f(\varpi_1, \dotsc, \varpi_n)(x)|_Q  \nonumber \\
 &:= (2 \pi i)^{-n} \lim_{t \rightarrow \infty} \sum_{\stack{x \in \Z^n - \{0\}}{|Q(x)| < t}} \!\!\! e(\langle x, v \rangle) f(\varpi_1, \dotsc, \varpi_n)(x). 
 \label{e:sczsumq}
\end{align}
Here the vector $Q$ gives rise to the function $Q(x) = \langle x, Q \rangle$, and the summation over the
region $|Q(x) | < t$ is absolutely convergent for each $t$.

More generally, given a homogeneous polynomial $P \in \C[x_1, \dotsc, x_n]$, Sczech defines
\begin{equation} \label{e:sczgen}  \tilde{\Psi}_{\mathrm{Z}}(A, P, Q, v) = (2 \pi i)^{-n - \deg P} \!\!\!\!\!\!\sum_{x \in \Z^n - \{0\}} \!\!\!\!\! e(\langle x, v \rangle) 
P( - \partial_{x_1}, - \partial_{x_2}, \dotsc, - \partial_{x_n})(f(\varpi_1, \dotsc, \varpi_n))(x)|_Q. 
\end{equation}
Sczech shows that the function $\tilde{\Psi}_{\mathrm{Z}}$ is a cocycle on $\Gamma$ valued in the module $\tilde{\cF}$ defined in Section~\ref{s:gencoc} below.
  In order to make a comparison with our Shintani cocycle $\Psi_{\mathrm{Sh}} \in Z^{n-1}(\Gamma, \cF)$, however, we 
consider now an associated cocycle valued in the module $\cF$ defined in Section~\ref{s:pscoc}.
We prove in Proposition~\ref{p:sczbeta} below that there exists a 
power-series valued cocycle $\Psi_{\mathrm{Z}} \in Z^{n-1}(\Gamma, \cF)$ such that for each integer $k \ge 0$, we have
\begin{equation} \label{e:sczboth}
 \Delta^{(k)} \Psi_{\mathrm{Z}}(A, M, Q, v) = \tilde{\Psi}_{\mathrm{Z}}(A, f_M^k, Q, v). 
 \end{equation}
Our main theorem in this section is:
\begin{theorem} \label{t:compare}  Define $\Psi_{\mathrm{Sh}}^+ \in Z^{n-1}(\Gamma, \cF)$ by
  \[ \Psi_{\mathrm{Sh}}^+(A, M, Q, v) = \frac{1}{2}(\Psi_{\mathrm{Sh}}(A, M, Q, v) + \Psi_{\mathrm{Sh}}(A, M, -Q, v)) \]
and let $\Psi_{\mathrm{P}} \in Z^{n-1}(\Gamma, \cF)$ be the ``polar cocycle" defined by
\begin{equation}\label{defPsipolar}\Psi_{\mathrm{P}}(A, M, Q, v) = \frac{(-1) ^{n+1}\det(\sigma)}{\prod_{j=1}^n zM^t \sigma_j},\end{equation}
where $\sigma=(\sigma_1, \ldots, \sigma_n)$ is the collection of the  leftmost columns of the tuple $A\in \Gamma^n.$
  Then we have the following equality of classes  in $H^{n-1}(\Gamma, \cF)$ : 
  \begin{equation}[\Psi_{\mathrm{Z}}] = [\Psi_{\mathrm{Sh}}^+] + [\Psi_{\mathrm{P}}].\end{equation}
\end{theorem}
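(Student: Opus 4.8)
The plan is to prove Theorem~\ref{t:compare} by introducing the two auxiliary cocycles $\alpha$ and $\beta$ sketched in the introduction, establishing that $\alpha$ recovers the Shintani construction $\Psi_{\mathrm{Sh}}^+$ up to the polar correction $\Psi_{\mathrm{P}}$, that $\beta$ recovers Sczech's construction $\Psi_{\mathrm{Z}}$, and finally exhibiting the explicit coboundary $h = \sum_{i=1}^{n-1}(-1)^i h_i$ with $dh = \beta - \alpha$. First I would set up the ambient module: since $\Psi_{\mathrm{Sh}}$ and $\tilde\Psi_{\mathrm{Z}}$ are naturally valued in different spaces ($\cF$ versus $\tilde\cF$), I would first produce, from Sczech's cocycle, a genuinely power-series valued cocycle $\Psi_{\mathrm{Z}} \in Z^{n-1}(\Gamma,\cF)$ satisfying the compatibility (\ref{e:sczboth}), so that both cocycles live in $H^{n-1}(\Gamma,\cF)$. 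This is the content of Proposition~\ref{p:sczbeta}: one expands Sczech's $Q$-summation of $f(\varpi_1,\dots,\varpi_n)(x)$ against $e(\langle x,v\rangle)$ into a power series in $z$, identifies it with a value of the Solomon--Hu pairing applied to a suitable $\cK/\cL$-valued function of $Q$, and checks the distribution and cocycle relations hold on the nose.

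The second step is the identification $\Psi_{\mathrm{Sh}}$ (symmetrized in $Q$) with $\alpha$ plus polar term. Recall $f(\tau_1,\dots,\tau_n)(x) = \det(\tau_1,\dots,\tau_n)/\prod_j\langle x,\tau_j\rangle$. For an $n$-tuple $A$ with leftmost columns $\sigma = (\sigma_1,\dots,\sigma_n)$, one has the generating-function identity expressing $h(\Phi_{\mathrm{Sh}}(A)(Q),v)(zM^t)$ — a sum of $e^{a\cdot zM^t}/\prod_j(1-e^{\sigma_j\cdot zM^t})$ over parallelepiped points $a$, weighted by the $Q$-perturbation weights (\ref{e:weight}) — in terms of the rational function $f(M^t\sigma_1,\dots,M^t\sigma_n)$ evaluated at the relevant variables; the leading term of the power series expansion is exactly the polar cocycle $\Psi_{\mathrm{P}}(A,M,Q,v) = (-1)^{n+1}\det(\sigma)/\prod_j zM^t\sigma_j$, and the symmetrization over $\pm Q$ is what makes the $Q$-dependence in the remaining terms match $\alpha$ (which does not depend on $Q$ at all). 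This is Proposition~\ref{p:shinalpha}. Concretely I expect the computation to show $\Psi_{\mathrm{Sh}}^+ = \alpha' - \Psi_{\mathrm{P}}$ (or some such sign arrangement) on the level of cocycles, where $\alpha'$ is the $\cF$-valued avatar of $\alpha(A) = f(A_{11},\dots,A_{n1})$; equivalently $[\alpha'] = [\Psi_{\mathrm{Sh}}^+] + [\Psi_{\mathrm{P}}]$ in $H^{n-1}(\Gamma,\cF)$.

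The third step is to show $[\beta'] = [\Psi_{\mathrm{Z}}]$, where $\beta(A)(x) = f(A_{1\varpi_1},\dots,A_{n\varpi_n})(x)$ with $\varpi_i$ the leftmost column of $A_i$ not orthogonal to $x$: this is precisely the rational function appearing in Sczech's $Q$-summation (\ref{e:sczsumq}), so $\beta'$ is by construction the $\cF$-valued cocycle $\Psi_{\mathrm{Z}}$ built in step one — this is Proposition~\ref{p:sczbeta} itself. The final and most delicate step is to verify $\beta - \alpha = dh$ with $h$ as defined via the $h_i$'s in the introduction. Here one uses the fundamental cocycle relation (\ref{e:fcoc}) for $f$ repeatedly: applying $d$ to $h_i$ telescopes, with the $w_i=1$ convention killing exactly the terms that would otherwise be ill-defined, and summing $\sum_i(-1)^i dh_i$ collapses via (\ref{e:fcoc}) to $f(A_{1\varpi_1},\dots,A_{n\varpi_n}) - f(A_{11},\dots,A_{n1})$, i.e.\ $\beta - \alpha$. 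The main obstacle I anticipate is precisely this last bookkeeping: tracking the indices $\varpi_i$ as they depend on which $A_j$ is omitted in $d$, handling the boundary cases where some $\varpi_i$ jumps, and confirming that all the intermediate $f$-expressions remain valid elements of $\cF$ (i.e.\ that after applying the Solomon--Hu machinery the apparent poles along $\langle x,\varpi_i\rangle = 0$ land in $\cL$ and hence vanish in $\cK/\cL$). Once $dh = \beta - \alpha$ is established, combining with steps two and three gives $[\Psi_{\mathrm{Z}}] = [\beta'] = [\alpha'] = [\Psi_{\mathrm{Sh}}^+] + [\Psi_{\mathrm{P}}]$ in $H^{n-1}(\Gamma,\cF)$, as desired.
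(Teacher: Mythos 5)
Your plan is correct and follows essentially the same route as the paper: construct the power-series valued $\Psi_{\mathrm{Z}}$ via Proposition~\ref{p:sczbeta}, identify $\Psi_{\mathrm{Sh}}^+ + \Psi_{\mathrm{P}}$ with $\Psi(\alpha)$ via Proposition~\ref{p:shinalpha}, and exhibit the explicit coboundary $h$ with $\beta - \alpha \equiv dh \pmod{\Image(\partial)}$, so that $d\Psi(h) = \Psi(\beta) - \Psi(\alpha)$. The only minor difference is in framing: the paper handles the convergence and regularity issues uniformly through the auxiliary variable $u$ and the general formalism of Proposition~\ref{p:psigdef} and Lemma~\ref{l:psidef}, rather than arguing that apparent poles land in $\cL$.
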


\begin{remark} It is proven in \cite[Theorem 3]{sczech} that the cohomology class $[\Psi_{\mathrm{P}}]$ is nontrivial.  However, it clearly vanishes under application of the
Shintani operator $\Delta^{(k)}$ and therefore does not intervene in arithmetic applications.
\end{remark}

\begin{remark} In \cite{sczech}, Sczech considers a matrix of $m$ vectors $Q_i \in \cQ$
and the $Q$ summation (\ref{e:sczgen}) with $Q(x) = \prod Q_i(x)$.
 However, the resulting cocycle is simply the average of the individual cocycles obtained from each $Q_i$.  (This is not clear from the original definition, but follows from Sczech's explicit formulas for his cocycle.)  Therefore it is sufficient to consider just one vector $Q$.
\end{remark}

\begin{remark} In view of Theorem~\ref{t:compare} and (\ref{e:sczboth}), the evaluation
of partial zeta functions of totally real fields using Sczech's cocycle given in \cite[Theorem 1]{sczech}
follows also from our Theorem~\ref{t:psiszeta}.  In fact, we obtain a slightly stronger result in that
we obtain the evaluation using each individual vector $Q_i = J_i(w^*)$, whereas Sczech obtains the result
using the matrix of all $n$ such vectors; it would be interesting to prove this stronger result directly 
from the definition of Sczech's cocycle via $Q$-summation, rather than passing through
the Shintani cocycle and Theorem~\ref{t:compare}.
\end{remark}

\subsection{A generalization of Sczech's construction} \label{s:genscz}

Let $k$ be a positive integer, and let $A = (A_1, \dotsc, A_k) \in \Gamma^k$.
For each tuple $w \in \{1, \dotsc, n\}^k$, let $ B(A, w) \subset \Z^n - \{0\}$ denote the set of vectors
$x$ such that the leftmost column of $A_i$ not orthogonal to $x$ is the $w_i$th, for $i = 1, \dotsc, k$.  In other words,
\[ B(A, w) = \bigcap_{i=1}^{k} \{x \in \Z^n: \langle x, A_{i j} \rangle = 0 \text{ for } j < w_i, 
 \langle x, A_{i w_i} \rangle \neq 0 \}. \]
 Here $A_{ij}$ denotes the $j$th column of the matrix $A_i$.  Then 
 \[ \Z^n - \{0\} = \bigsqcup_{w \in \{1, \dotsc, n\}^k} B(A, w). \]
Sczech's sum (\ref{e:sczsumq}) can be written with $k=n$ as:
\[ \tilde{\Psi}_{\mathrm{Z}}(A, Q, v) = \sum_w \sum_{x \in B(A,w)} e(\langle x, v \rangle) f(A_{1 w_1}, \dotsc, A_{n w_n})(x)|_Q. 
\]
We now generalize this expression by replacing  the columns $A_{i w_i}$ with certain other columns of the matrices $A_i$.

Write $S_k = \{1, \dotsc, k\}$ and for simplicity let $S = S_n$.
 Given $A = (A_1, \dotsc, A_k) \in \Gamma^k$ and
 an element $t = ((a_1, b_1), (a_2, b_2), \dotsc, (a_n, b_n)) \in (S_k \times S)^n$, 
define \[ \tau(A, t) = (A_{a_1 b_1}, A_{a_2 b_2}, \dotsc, A_{a_n b_n}). \]
In other words, $\tau(A, t)$ is an $n \times n$ matrix whose $i$th column is the $b_i$th column of $A_{a_i}$.

For any function $g: S^k \rightarrow (S_k \times S)^n$, we would like to consider the sum
\begin{equation} \label{e:psigdef}
 \psi(g)(A, Q, v) = \sum_w \sum_{x \in B(A,w)} e(\langle x, v \rangle) f(\tau(A, g(w)))(x)|_Q. 
\end{equation}
For example, $\tilde{\Psi}_{\mathrm{Z}} = \psi(\beta)$ where $\beta(w) = ((1, w_1), (2, w_2), \dotsc, (n, w_n)).$  The difficulty 
with (\ref{e:psigdef}) in general, however, is that the denominators in the expression defining $f$ may vanish; it is
therefore necessary to introduce an auxiliary variable $u \in \C^n$ and to consider the function 
\begin{equation} \label{e:psigdef2}
 \psi(g)(A, Q, v, u) = \sum_w \sum_{x \in B(A, w)} e(\langle x, v \rangle) f(\tau(A, g(w)))(x-u)|_Q. 
\end{equation}
By Sczech's analysis~\cite[Theorem 2]{sczech}, this $Q$-summation converges for all $u\in\C^n$ such that the map $x\mapsto f(\tau(A,g(w)))(x-u)$ is defined on $B(A,w)$, i.e., such that the denominator of the right hand side of~\eqref{E:sczechf} is nonzero. Thus it converges for $u$ in a dense open subset of $\C^n$ that consists of the complement of a countable union of hyperplanes. In fact, this convergence is uniform for $u$ in sufficiently small compact subsets of $\C^n$.

This formalism allows for the construction of homogeneous cochains in $C^{k-1}(\Gamma, \cF)$
as follows.
\begin{prop} \label{p:psigdef}
For any function $g: S^k \rightarrow (S_k \times S)^n$ and $A = (A_1, \dots A_k) \in \Gamma^k$,
 there is a unique power series
\[ \Psi(g)(A, Q, v) \in \Q((z)) \] such that
\begin{equation} \label{e:psieq}
 \psi(g)(A, Q, v, u) = (2\pi i)^{n}\Psi(g)(A, Q, v)( 2\pi i u)
\end{equation}
for any $u \in \C^n$ for which  (\ref{e:psigdef2}) is defined.
Furthermore, for any $M \in \cM$ we have \[ \Psi(g)(A, Q, v)(zM^t) \in \R((z))^{\hd},\]
and the assignment $(A, M, Q, v) \mapsto \Psi(g)(A, Q, v)(zM^t)$ is a homogeneous
cochain in $C^{k-1}(\Gamma, \cF)$.
\end{prop}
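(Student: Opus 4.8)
The plan is to reduce everything to Sczech's analytic results in \cite[Theorem 2]{sczech} and to our algebraic formalism of the $\hd$-subalgebra. First I would observe that, by the convergence statement quoted just before the proposition, for $u$ ranging over a sufficiently small compact neighborhood $K$ of a generic point $u_0\in\C^n$, the $Q$-summation $\psi(g)(A,Q,v,u)$ converges uniformly and defines a holomorphic function of $u$ on a neighborhood of $u_0$. Sczech's explicit evaluation of his $Q$-summation (the ``Bernoulli function'' formula, \cite[Theorem 2]{sczech}, applied columnwise to the rational function $f(\tau(A,g(w)))(x-u)$ via the differential-operator trick in (\ref{e:sczgen})) expresses each inner sum over $B(A,w)$ as a finite $\Q$-linear combination of products of elementary functions of the entries of $2\pi i u$: periodic Bernoulli polynomials evaluated at the relevant linear forms, divided by the linear forms $\langle 2\pi i u,\tau_j\rangle$. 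Crucially these expressions are, after the substitution $2\pi i u \mapsto $ formal variables, rational functions whose denominators are products of dense linear forms (because $A\in\Gamma^n$ has rational columns, and the set $\cM$ is designed precisely so that $zM^t\sigma_j$ is a dense linear form). Expanding the periodic Bernoulli polynomials in their Taylor series at $0$ and clearing denominators then exhibits each summand, and hence the finite sum over all $w$, as an element $g/p$ with $g\in\Q[[z]]$ and $p$ a powerful homogeneous polynomial: this is exactly an element of $\Q((z))^{\hd}$. This produces the power series $\Psi(g)(A,Q,v)(z)\in\Q((z))^{\hd}$ characterized by (\ref{e:psieq}), and the substitution $z\mapsto zM^t$ lands in $\R((z))^{\hd}$ by the same density argument (as in Lemma~\ref{L:hd}). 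Uniqueness is immediate since (\ref{e:psieq}) determines all Taylor coefficients of $\Psi(g)(A,Q,v)$ by evaluating at a convergent Zariski-dense set of $u$.

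Next I would check that $(A,M,Q,v)\mapsto \Psi(g)(A,Q,v)(zM^t)$ genuinely lies in $C^{k-1}(\Gamma,\cF)$, i.e.\ that for each fixed $A$ the map $(M,Q,v)\mapsto \Psi(g)(A,Q,v)(zM^t)$ is an element of $\cF$. The only substantive point is the distribution relation (\ref{e:dist}); this follows directly from the defining sum (\ref{e:psigdef2}) by the standard change of variables in Sczech's $Q$-summation, exactly as in \cite[\S1]{sczech}: replacing $M$ by $\lambda M$, $Q$ by $\lambda^{-1}Q$, and summing over $w$ with $\lambda w=v$ corresponds to the lattice decomposition $\Z^n=\bigsqcup \lambda\Z^n + (\text{cosets})$ combined with the homogeneity $f(\lambda\tau_1,\dots,\lambda\tau_n)=\lambda^{-n}f(\tau_1,\dots,\tau_n)$, which accounts for the factor $\sgn(\lambda)^n$; the $Q$-summation respects this because rescaling $Q$ by a positive scalar does not change the ordering condition $|Q(x)|<t$ and rescaling by $-1$ is harmless since it is an even power. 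Membership in $\R((z))^{\hd}$ for every $M\in\cM$ has already been established above, so $\Psi(g)(A,Q,v)(zM^t)\in\cF$.

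The main obstacle, and the part requiring genuine care rather than bookkeeping, is the passage from Sczech's convergent-but-conditional $Q$-summation to a \emph{power series} identity: one must verify that the function of $u$ produced by the $Q$-summation is the germ at $0$ (after the affine substitution $2\pi i u$) of an honest element of $\Q((z))^{\hd}$, uniformly in the combinatorial data $w$, and that summing over the finitely many $w\in S^n$ preserves the $\hd$ property. Concretely this means tracking that every denominator appearing in Sczech's closed formula is (a product of) the dense linear forms $\langle z, \tau(A,g(w))_j\rangle$ and never acquires a ``bad'' factor with a vanishing pure-power coefficient; here one uses that each column $\tau(A,g(w))_j$ is a nonzero rational vector and that $M\in\cM$ forces $zM^t\tau(A,g(w))_j$ to be dense. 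Granting the earlier-stated convergence (\cite[Theorem 2]{sczech}) and Lemmas~\ref{l:ppas} and~\ref{L:hd}, the remaining steps are routine. The cocycle/coboundary identities relating the various $\psi(g)$ — and in particular that $\Psi(\beta)=\Psi_{\mathrm{Z}}$ and the coboundary $h$ from the introduction — are deferred to the subsequent propositions and are not needed here; this proposition only asserts that each $\psi(g)$ defines a well-defined homogeneous cochain.
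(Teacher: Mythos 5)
Your overall strategy---push the $Q$-summation through Sczech's elementary evaluations and then read off the $\hd$ structure from denseness of the linear forms $zM^t\tau_j$---is the same as the paper's, but the step you yourself flag as ``the main obstacle'' and then dismiss as routine is precisely where the paper has to do real work, and your sketch of it does not go through as stated. The inner sums in (\ref{e:psigdef2}) run over the sets $B(A,w)$, which are \emph{not} lattices: each has the form $L-\bigcup_i M_i$ with $L=H\cap\Z^n$ a rational sublattice (typically of rank $<n$, since it is cut out by the orthogonality conditions $\langle x,A_{ij}\rangle=0$, $j<w_i$) and with the $M_i$ proper sublattices. Sczech's Theorem 2 evaluates $\mathscr{C}_1(u,v,Q)$, a $Q$-summation over a translated full-rank lattice, so it cannot simply be ``applied columnwise'' to a sum over $B(A,w)$. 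The paper's route (Lemma~\ref{l:psidef} together with the first sentence of the proof of the proposition) supplies exactly the missing ingredients: (i) decompose $B(A,w)$ by inclusion--exclusion into sums over the rational sublattices $L=H\cap\Z^n$; (ii) for each such $L$ make the change of variables $x'=x\tau$, which is only compatible with the summation lattice after inserting the character (Fourier) expansion over the finite coset group $y\in L^*/\tau L^*$ (resp.\ $K^*/\tau L^*$); and (iii) in the essential case $\dim H<n$, express the result through Sczech's $\mathscr{C}_1(\lambda,u,v,Q)$ and invoke the proof of his Lemma 7 to see that these integrated $\mathscr{C}_1$'s, multiplied by $\prod_i\bigl(1-e(u_i')\bigr)$, are regular power series. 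None of these appears in your proposal, and without them your assertion that each inner sum is ``a finite $\Q$-linear combination of products of elementary functions'' with only dense linear forms in the denominators is unsupported; the lower-dimensional lattice sums in particular are not covered by the closed formula you cite.

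A secondary gap: membership in $C^{k-1}(\Gamma,\cF)$ requires more than that each value lie in $\cF$; a homogeneous cochain must be $\Gamma$-equivariant, and you never check this (you address only the distribution relation (\ref{e:dist}), which the paper in fact leaves implicit). The paper verifies equivariance from $\tau(CA,g(w))=C\,\tau(A,g(w))$ and $B(CA,w)=B(A,w)C^{-1}$, which after a change of variables give $\psi(g)(CA,Q,v,uM^t)=\psi(g)(A,C^{-1}Q,C^{-1}v,uM^tC)$; some such verification is needed. Your uniqueness argument and the deduction of the $\hd$ property from denseness of $zM^t\tau_j$ (as in Lemma~\ref{L:hd}) are fine.
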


The following lemma is the technical heart of the proof of Proposition~\ref{p:psigdef} and is proven by
reducing to computations in \cite{sczech}.

\begin{lemma}\label{l:psidef}
Let $H\subset \Q^n$ be a vector subspace and let $L=H\cap \Z^n$.  Let $\tau=(\tau_1,\ldots,\tau_n)\in M_n(\Z) \cap \Gamma$.  Then for every $v\in\Q^n$,
\[
G(u):=\sum_{x\in L}e(\langle x,v\rangle)f(\tau_1,\ldots,\tau_n)(x-u)|_Q \]
belongs to $(2\pi i)^{n}\Q((2\pi i u))$.  If $M=(m_{ij})\in \cM$, then $G(uM^t)\in (2\pi i)^{n}\Q(\{m_{ij}\})((2\pi i u))^{\hd}$.
\end{lemma}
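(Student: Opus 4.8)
The plan is to deduce the lemma from Sczech's analysis of $Q$-summed Eisenstein series in \cite[Theorem 2]{sczech} (and the explicit evaluations occurring in its proof), after a chain of elementary reductions that peel off the degenerate factors of $f(\tau_1,\dotsc,\tau_n)$. First I would isolate the linear forms that become constant on $H$: put $I=\{\,i:\tau_i\in H^\perp\,\}$ and $d=\dim H$, so that for $i\in I$ one has $\langle x-u,\tau_i\rangle=-\langle u,\tau_i\rangle$ for every $x\in L$. Factoring these out of the sum,
\[
G(u)=(-1)^{|I|}\det(\tau_1,\dotsc,\tau_n)\,\Bigl(\textstyle\prod_{i\in I}\langle u,\tau_i\rangle\Bigr)^{-1}\ \sum_{x\in L}\frac{e(\langle x,v\rangle)}{\prod_{i\notin I}\langle x-u,\tau_i\rangle}\Big|_Q .
\]
Since $\tau_i$ is a nonzero integral vector, $\langle u,\tau_i\rangle$ is $2\pi i$ times a nonzero linear form in $2\pi i u$; and since $M\in\cM$ and $\tau_i\neq 0$, the form $\langle uM^t,\tau_i\rangle=\sum_j u_j(\tau_i^tM)_j$ is dense. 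Hence the prefactor lies in $(2\pi i)^{|I|}\Q((2\pi i u))$, and in $(2\pi i)^{|I|}\Q(\{m_{ij}\})((2\pi i u))^{\hd}$ after the substitution $u\mapsto uM^t$. (If $I=\{1,\dotsc,n\}$ then $H=0$, $L=\{0\}$, and $G$ is exactly this polar term, so the lemma already holds.)

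Next I would pass to integral coordinates on $H$. Fixing a $\Z$-basis $b_1,\dotsc,b_d$ of $L$, writing $x=\sum_jm_jb_j$, and putting $\tilde\tau_i=(\langle b_j,\tau_i\rangle)_j\in\Z^d$, $\tilde v=(\langle b_j,v\rangle)_j\in\Q^d$, $\tilde Q=(\langle b_j,Q\rangle)_j$ and $u_i=\langle u,\tau_i\rangle$, the inner sum becomes the $\tilde Q$-summation
\[
\sum_{m\in\Z^d}\frac{e(\langle m,\tilde v\rangle)}{\prod_{i\notin I}\bigl(\langle m,\tilde\tau_i\rangle-u_i\bigr)}\Big|_{\tilde Q}
\]
over $\Z^d$ (note $Q(x)=\langle m,\tilde Q\rangle$, so the $Q$-ordering on $L$ is literally the $\tilde Q$-ordering on $\Z^d$). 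Here $\tilde Q$ has $\Q$-linearly independent components: a relation $\sum_jc_j\langle b_j,Q\rangle=0$ reads $\langle\sum_jc_jb_j,Q\rangle=0$ with $\sum_jc_jb_j\in\Q^n$, whence $\sum_jc_jb_j=0$ by $Q\in\R^n_{\Irr}$. Also the forms $\langle\cdot,\tilde\tau_i\rangle$ ($i\notin I$) span $(\Q^d)^*$ because $\tau_1,\dotsc,\tau_n$ span $\Q^n$; in particular $n-|I|\geq d$. If $n-|I|>d$, a standard partial-fraction step removes the excess: for a $\Q$-linear dependence $\sum_ic_i\tilde\tau_i=0$ one has $\sum_ic_i\bigl(\langle m,\tilde\tau_i\rangle-u_i\bigr)=-\langle u,\sum_ic_i\tau_i\rangle$, a nonzero linear form in $u$ (nonzero because the $\tau_i$ are independent; dense after $u\mapsto uM^t$), which lets one rewrite $\prod_{i\notin I}(\cdots)^{-1}$ as a $\Q$-combination of products of $d$ such factors times reciprocals of these linear forms in $u$, the latter being absorbed into the prefactor. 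One checks convergence of each piece (the residual form-sets still span) and that the $2\pi i$ bookkeeping closes: the prefactor then carries $|I|+\bigl((n-|I|)-d\bigr)=n-d$ reciprocal linear forms and the core sum has $d$ forms in $d$ variables.

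Finally I would apply Sczech's computation to the core sum. By \cite[Theorem 2]{sczech} the $\tilde Q$-summation of $d$ linear forms in $d$ variables converges on a dense open subset of parameter space, and the explicit evaluation in its proof (via cotangent expansions and Bernoulli numbers) exhibits it as $(2\pi i)^d$ times a power series in $2\pi i u$ with coefficients in $\Q$ — and, after $u\mapsto uM^t$ makes every form dense, as an element of $(2\pi i)^d\Q(\{m_{ij}\})((2\pi i u))^{\hd}$. Multiplying back the prefactor and using that $\Q((u))^{\hd}$ and $\Q(\{m_{ij}\})((u))^{\hd}$ are $\Q$-algebras then gives $G(u)\in(2\pi i)^n\Q((2\pi i u))$ and $G(uM^t)\in(2\pi i)^n\Q(\{m_{ij}\})((2\pi i u))^{\hd}$. \textbf{The main obstacle} is the honest treatment of the genuinely degenerate configurations — several $\tau_i$ restricting to proportional (not just zero) forms on $H$, or more forms than variables — so that the repeated partial-fraction identities produce \emph{only} powerful denominators in $u$ and nothing spurious, together with matching our normalizations to Sczech's while tracking the powers of $2\pi i$ so as to land exactly at $(2\pi i)^n$ (note that $|I|+d\neq n$ in general).
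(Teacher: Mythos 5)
Your plan is essentially viable, but it takes a genuinely different route from the paper. The paper's proof performs one change of variables $x'=x\tau$, $u'=u\tau$, $Q'=\tau^{-1}Q$, so that all $n$ denominators become the coordinate forms $x_i'-u_i'$; it then passes from the sublattice $L\tau$ to the saturated lattice $K=H\tau\cap\Z^n$ by inserting finite character sums (the Fourier expansion of $\mathbf{1}_{L\tau}$), and recognizes the result as a finite sum of Sczech's functions $\mathscr{C}_1(u',\cdot,Q')$ when $H=\Q^n$, respectively of the integral-averaged functions $\mathscr{C}_1(\lambda,u',\cdot,Q')$ (an average over characters cutting out $H\tau$) when $\dim H<n$. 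The entire regularity assertion --- membership in $(2\pi i)^n\Q((2\pi iu))$, and powerful denominators after the twist by $M$ --- is then quoted from \cite[Theorem 2]{sczech} and the proof of Sczech's Lemma 7, so the paper never needs any case analysis on how the $\tau_i$ restrict to $H$. You instead reduce the dimension: you split off the forms vanishing identically on $H$ as a polar prefactor, rewrite the sum in a $\Z$-basis of $L$, and partial-fraction away dependencies among the restricted forms until each terminal product has independent forms. What your route buys is an elementary and explicit bookkeeping of where the poles in $u$ come from; what the paper's route buys is brevity, since Sczech's $\mathscr{C}_1(\lambda,\cdot)$ already packages exactly the degenerate sums you re-derive. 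Your key observation --- that a partial-fraction step only deletes a form lying in the span of the remaining ones, so the span is preserved and every terminal product has exactly $d$ independent forms --- is the right way to dispose of the ``main obstacle'' you flag, and your checks that $\tilde Q$ is irrational and that all auxiliary linear forms become dense after $u\mapsto uM^t$ are the correct ones; the $2\pi i$ count $(n-d)+d=n$ closes as you say.

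Two standard points still have to be supplied to make this complete. First, your terminal sums have denominators $\prod_i(\langle m,\tilde\tau_i\rangle-u_i)$ with $\tilde\tau$ integral but in general not unimodular; to invoke \cite[Theorem 2]{sczech} you must make the same move as in the first paragraph of the paper's proof: substitute $m'=m\tilde\tau$ and expand $\mathbf{1}_{\Z^d\tilde\tau}$ in the characters of $\Z^d/\Z^d\tilde\tau$, which only introduces rational translates of $\tilde v$ and so keeps all coefficients in $\Q$. Second, splitting a conditionally convergent $Q$-limit along a partial-fraction identity is legitimate only after the individual pieces are known to be $Q$-summable; since the identity holds termwise, the clean way is to induct on the number of denominator factors (base case: independent restricted forms, handled by Sczech), so that each piece produced by a split converges by the inductive hypothesis and the identity of $Q$-limits follows. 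With these supplements your argument does prove the lemma, by a more hands-on decomposition than the one in the paper.
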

\begin{remark}
As with~\eqref{e:psigdef2}, the $Q$-summation defining $G(u)$ converges for $u$ in a dense open subset of $\C^n$ that consists of the complement of a countable union of hyperplanes.  The convergence is uniform for $u$ in sufficiently small compact sets.
\end{remark}
\begin{proof}
Set $x'=x\tau$, $u'=u\tau$, and $Q'=\tau^{-1}Q$.  Then
\begin{align*}
G(u) &= \sum_{x\in L}\frac{e(\langle x,v\rangle)\det(\tau)}{\langle x-u,\tau_1\rangle\cdots \langle x-u, \tau_n\rangle}\Big|_Q \\
&=
\sum_{x'\in L\tau}\frac{e(\langle x',\tau^{-1}v\rangle)\det(\tau)}{(x_1'-u_1')(x_n'-u_n')}\Big|_{Q'}.
\end{align*}
Suppose first that $H=\Q^n$, so that $L=\Z^n$.  Then $L\tau$ is a finite-index sublattice of $L$. Since the nontrivial characters of $L/L\tau$ are $x\mapsto e(\langle x,\tau^{-1}y\rangle)$ for $y\in L^*/\tau L^*$, we have the Fourier expansion
\[
\mathbf{1}_{L\tau}(x') = \frac{1}{|\det\tau|}\sum_{y\in L^*/ \tau L^*}e(\langle x',\tau^{-1}y\rangle).
\]
($L^*$ is the dual lattice of $L$, with its elements naturally viewed as column vectors.)  
Therefore,
\[
G(u)=s_\tau \sum_{y\in L^*/\tau L^*}\sum_{x'\in L}
\frac{e(\langle x',\tau^{-1}(v+y)\rangle)}
{(x_1'-u_1')\cdots(x_n'-u_n')}\Big|_{Q'},
\] where $s_\tau = \sgn(\det \tau).$
Letting $p=u'-x'$, we obtain
\[
G(u) =s_\tau \sum_{y\in L^*/\tau L^*}\mathscr{C}_1(u',\tau^{-1}(v+y),Q')
\]
where, adopting notation from \cite[(3)]{sczech},
\[
\mathscr{C}_1(u,v,Q)=\sum_{p\in \Z^n\tau+u}\frac{e(\langle u-p,v\rangle)}{p_1\cdots p_n}\Big|_Q.
\]
The fact that $G(u)$
belongs to $(2\pi i)^{n}\Q((2\pi i u))$ now follows from Sczech's evaluation of $\mathscr{C}_1(u,v,Q)$ in elementary terms
given in~\cite[Theorem 2]{sczech}.

Now suppose $r:=\dim H<n$.  Choose a matrix $\lambda=(\lambda_1,\ldots,\lambda_r)\in M_{n\times r}(\Z)$ whose column space is $H^\perp$.  Then $L\tau=(H\cap \Z^n)\tau$ has finite index in 
\[
K:= H\tau\cap \Z^n=\{x\in\Z^n : \langle x,\lambda_1\rangle =\cdots \langle x,\lambda_r\rangle=0\}.
\]
Inserting the character relations as above, we have
\[
G(u)= s_\tau \sum_{y\in K^*/\tau L^*}\sum_{x'\in K}
\frac{e(\langle x',\tau^{-1}(v+y)\rangle)}
{(x_1'-u_1')\cdots(x_n'-u_n')}\Big|_{Q'}.
\]
Computing as in~\cite[page 599]{sczech}, we obtain 
\begin{align*}
G(u)
&= s_\tau  \sum_{y\in K^*/\tau L^*} \mathscr{C}_1(\lambda,u',\tau^{-1}(v+y),Q'), 
\end{align*}
where
\[
\mathscr{C}_1(\lambda,u,v,Q)=\int_0^1\cdots\int_0^1
\mathscr{C}_1(u,t_1\lambda_1+\cdots t_r\lambda_r+v,Q)dt_1\cdots dt_r.
\]
By the proof of~\cite[Lemma 7]{sczech}, we have
\[
\mathscr{C}_1(\lambda,u',\tau^{-1}(v+y),Q')\prod_{i=1}^n (1-e(u_i'))\in  (2\pi i)^{n}\Q[[2\pi i u']]=(2\pi i)^{n}\Q[[2\pi i u]].
\]
for all $y$.  Thus $G(u)\in (2\pi i)^{n}\Q((2\pi i u))$.

To prove the last statement of the lemma, let  $M=(m_{ij})\in \cM$.  Then $e(uM^t\tau_j)-1= 2\pi i(uM^t\tau_j)H(u)$ for an invertible power series  $H(u) \in \Q(\{m_{ij}\})[[2\pi i u]]$.  Since $M \in \cM$, no component of $M^t\tau_j$ is equal to zero.  Therefore, $uM^t\tau_j\in\Q(\{m_{ij}\})[u]$ is a dense linear form  (see the paragraph following Lemma~\ref{l:ppas} for the terminology). The desired result then follows from the proof of Lemma~\ref{L:hd}.
\end{proof}

\begin{proof}[Proof of Proposition~\ref{p:psigdef}]	
Each $B(A,w)$ has the form $L-\bigcup_i M_i$ where $L$ is a sublattice of $\Z^n$ and the $M_i$ are finitely many distinct sublattices of $L$ with positive codimension.  The existence of the functions $\Psi(g,A,Q,v)$ now follows from inclusion-exclusion and Lemma~\ref{l:psidef}, as does the fact that $\Psi(g)(A,Q,v)(zM^t)$ belongs to $\R((z))^{\hd}$ when $M\in \cM$.

To see that $(A,M,Q,v)\mapsto\Psi(g)(A,Q,v)(zM^t)$ is homogeneous $(k-1)$-cochain, we first observe that for any $C\in\Gamma$, 
we have $\tau(CA,g(w))=C\tau(A,g(w))$.  
Furthermore, it is easy to see that $B(CA,w)=B(A,w)C^{-1}$, and a straightforward  change of variables then shows that
\begin{align*}
\psi(g)(CA,Q,v,uM^t) &=
\psi(g)(A,C^{-1}Q,C^{-1}v,uM^tC).\qedhere
\end{align*}
\end{proof}

\subsection{Recovering the Sczech and Shintani cocycles}

We apply the formalism of the previous section
to the following functions $S^n \rightarrow (S \times S)^n$:
\begin{align*}
\alpha(w) &= ((1, 1), (2,1), \dotsc, (n,1)), \\
\beta(w) &= ((1, w_1), (2, w_2), \dotsc, (n, w_n)).
\end{align*}
As we now show, the power series $\Psi(\alpha)$ and $\Psi(\beta)$ associated to these functions are the
Shintani and Sczech cocycles, respectively (up to an error term given by the polar cocycle in the first instance).

First we prove a lemma that evaluates the Shintani operator on a regular power series twisted by
 $M \in \cM$.  Let $f_M$ be the polynomial  defined in (\ref{e:fmdef}). Let $\sigma \in M_n(\Z)$ and 
 define coefficients $P_r^k(\sigma)$ indexed by tuples $r = (r_1, \dotsc, r_n)$ of nonnegative integers
by the formula
\begin{equation} \label{e:prdef}
 f_M(z \sigma^t)^k = \sum_{r} \frac{P_r^k(\sigma)}{r!} z_1^{r_1} \cdots z_n^{r_n},
\end{equation}
where $r! := r_1! \cdots r_n!.$  When  $\sigma = 1,$ we simply write  $P_r^k = P_r^k(1).$ 

\begin{lemma} \label{l:Mtwist}
Let \[ F(z) = \sum_r F_r z^r
\in K[[z_1, \dots, z_n]], \] where $r$ ranges over $n$-tuples of nonnegative integers. 
Let $M \in \cM$.
 Then
\[ \Delta^{(k)} F(z M^t) = \sum_{r} F_r P_r^k.
\]

\end{lemma}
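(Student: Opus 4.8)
The plan is to reduce the statement to plain coefficient extraction and then to a monomial identity that I would verify with an exponential generating function.

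Since each linear form $(zM^t)_j=\sum_i M_{ji}z_i$ has zero constant term, the substitution $F\mapsto F(zM^t)$ is well defined and produces an element of $K'[[z]]$, where $K'=K(\{m_{ij}\})$; in particular $F(zM^t)$ is a genuine power series. Hence, by Remark~\ref{r:shreg} (applied with $K'$ in place of $K$),
\[ \Delta^{(k)}F(zM^t)=(k!)^n\,\coeff\bigl(F(zM^t),\,\N(z)^k\bigr),\qquad \N(z):=z_1\cdots z_n. \]
Writing $F(zM^t)=\sum_r F_r\prod_{j=1}^n(zM^t)_j^{r_j}$ and using that the $r$-th term is homogeneous of degree $|r|=\sum_j r_j$, so that coefficient extraction (being $K'$-linear) only involves the finitely many $r$ with $|r|=nk$, we obtain
\[ \Delta^{(k)}F(zM^t)=(k!)^n\sum_r F_r\,\coeff\Bigl(\textstyle\prod_{j=1}^n(zM^t)_j^{r_j},\,\N(z)^k\Bigr). \]
Everything thus comes down to the monomial identity $(k!)^n\,\coeff\bigl(\prod_j(zM^t)_j^{r_j},\N(z)^k\bigr)=P_r^k$ for every tuple $r$ of nonnegative integers.

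To prove this I would introduce a fresh tuple of variables $t=(t_1,\dots,t_n)$ and use the identity $e^{tM^tx}=\prod_j e^{t_j(xM)_j}$ (noted, with the auxiliary variable renamed to $t$, just before (\ref{e:shintanibogus})): extracting the coefficient of $\N(t)^k=(t_1\cdots t_n)^k$ gives $\prod_j(xM)_j^k/(k!)^n=f_M(x)^k/(k!)^n$, i.e.\ $f_M(x)^k=(k!)^n\coeff_t\bigl(e^{tM^tx},\N(t)^k\bigr)$. Now specialize $x$ to the variable vector $z$. Rewriting the exponent as $tM^tz=\sum_i z_i(tM^t)_i$ gives $e^{tM^tz}=\prod_i e^{z_i(tM^t)_i}$, whence $r!\,\coeff_z\bigl(e^{tM^tz},z^r\bigr)=\prod_i(tM^t)_i^{r_i}$. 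Since extractions in the disjoint variable sets $z$ and $t$ commute and $P_r^k=r!\,\coeff(f_M(z)^k,z^r)$ by (\ref{e:prdef}), we conclude
\[ P_r^k=(k!)^n\,\coeff_t\Bigl(r!\,\coeff_z\bigl(e^{tM^tz},z^r\bigr),\,\N(t)^k\Bigr)=(k!)^n\,\coeff_t\Bigl(\textstyle\prod_{i=1}^n(tM^t)_i^{r_i},\,\N(t)^k\Bigr), \]
which is precisely the desired monomial identity after renaming $t$ to $z$. Combining the displays gives $\Delta^{(k)}F(zM^t)=\sum_r F_r P_r^k$.

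I do not expect a genuine obstacle; the only point needing care is the bookkeeping around the conventions for $M$ versus $M^t$ and row versus column vectors, in particular the rewriting $tM^tz=\sum_j t_j(zM)_j=\sum_i z_i(tM^t)_i$ used above. Alternatively, one can argue in a self-contained way by expanding both sides with the multinomial theorem and matching the two resulting sums over nonnegative-integer $n\times n$ matrices with prescribed row and column sums via transposition of the index matrix; routing through the generating function merely packages this bijection more cleanly.
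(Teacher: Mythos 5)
Your proposal is correct and takes essentially the same route as the paper: both reduce, via Remark~\ref{r:shreg} and linearity (with your homogeneity remark justifying the term-by-term extraction), to the single monomial identity $(k!)^n\coeff\bigl((zM^t)^r,\N(z)^k\bigr)=P_r^k$, which is exactly the paper's reciprocity $C_{r,s}(M)=C_{s,r}(M^t)$ specialized to $s=(k,\dotsc,k)$. Your verification by double coefficient extraction from $e^{tM^tz}$ is just the generating-function form of the paper's symmetric mixed-partials expression $\frac{1}{m!}\partial_{z}^{s}\partial_{y}^{r}(zMy)^m|_{z=y=0}$, since $e^{zMy}=\sum_m (zMy)^m/m!$, so the two arguments differ only in packaging.
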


\begin{proof}  We have \[ \Delta^{(k)}(F(z M^t)) = \sum_{r} F_r \Delta^{(k)}((z M^t)^r). \]
As noted in Remark~\ref{r:shreg},  $\Delta^{(k)}$ evaluated on a regular power series equals $(k!)^n$ times the coefficient
of $z_1^k \cdots z_n^k$.
Meanwhile $P_r^k$ is $r!$ times the coefficient of $z^r$ in $(zM)_1^k \cdots (zM)_n^k$.
The desired result then follows (with $s = (k, k, \dotsc, k)$) from the following general reciprocity
law for any tuples $r$ and $s$ such that $\sum r = \sum s = m$.   If we let 
\[ C_{r,s}(M) = s! \cdot (\text{coefficient of } z^s \text{ in } (zM)^{r}), \]
  then \begin{equation} \label{e:crs}
  C_{r,s}(M) = C_{s,r}(M^t). \end{equation}  To see this, note that
  \[ C_{r,s}(M) = \frac{1}{m!}\left. \left(\frac{\partial}{\partial z_1}\right)^{s_1} \cdots  \left(\frac{\partial}{\partial z_n}\right)^{s_n}
   \left(\frac{\partial}{\partial y_1}\right)^{r_1} \cdots  \left(\frac{\partial}{\partial y_n}\right)^{r_n}(z M y)^m \right|_{z = y = (0, \dots, 0)}.
  \]
  This expression is clearly invariant upon switching $r$ and $s$ and replacing $M$ by $M^t$.
\end{proof}

\begin{prop} \label{p:sczbeta}
Let $\beta(w) = ((1, w_1), (2, w_2), \dotsc, (n, w_n)).$  Define
\begin{equation} \label{e:psiZpsibeta}
 \Psi_{\mathrm{Z}}(A, M, Q, v) = \Psi(\beta)(A, Q, v)(zM^t). \end{equation}
Then $\Psi_{\mathrm{Z}}$ satisfies (\ref{e:sczboth}).
\end{prop}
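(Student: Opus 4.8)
The plan is to route both sides of \eqref{e:sczboth} through the single auxiliary function
\[
\psi(\beta)(A,Q,v,u)=\sum_{x\in\Z^n-\{0\}}e(\langle x,v\rangle)\,f(\varpi_1,\dots,\varpi_n)(x-u)\big|_Q,
\]
where $\varpi_i=\varpi_i(A,x)$ denotes the leftmost column of $A_i$ not orthogonal to $x$; on each region $B(A,w)$ one has $\tau(A,\beta(w))=(\varpi_1,\dots,\varpi_n)$, so this is precisely the sum appearing in Proposition~\ref{p:psigdef} for $g=\beta$, $k=n$. By that proposition $\psi(\beta)(A,Q,v,u)=(2\pi i)^n\,\Psi(\beta)(A,Q,v)(2\pi i u)$, while by definition $\Psi_{\mathrm{Z}}(A,M,Q,v)=\Psi(\beta)(A,Q,v)(zM^t)$. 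On the other side, Sczech's formula \eqref{e:sczgen} applied with the polynomial $P=f_M^k$ (of degree $nk$) reads
\[
\tilde{\Psi}_{\mathrm{Z}}(A,f_M^k,Q,v)=(2\pi i)^{-n-nk}\sum_{x\in\Z^n-\{0\}}e(\langle x,v\rangle)\,\bigl[f_M^k(-\partial_{x_1},\dots,-\partial_{x_n})f(\varpi_1,\dots,\varpi_n)\bigr](x)\big|_Q.
\]

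First I would turn the inner differentiation into a shift. Since the $\varpi_i$ are locally constant in $x$, on each $B(A,w)$ the function $f(\varpi_1,\dots,\varpi_n)$ is a fixed rational function regular at every point of $B(A,w)$, so the elementary identity $[P(-\partial_x)g](x)=[P(\partial_u)g(x-u)]|_{u=0}$ holds termwise. Using the uniform convergence of the $Q$-summation on small compacts (the Remark following Lemma~\ref{l:psidef}), I may differentiate term by term and pull $f_M^k(\partial_u)$ past the sum, getting $\tilde{\Psi}_{\mathrm{Z}}(A,f_M^k,Q,v)=(2\pi i)^{-n-nk}\bigl[f_M^k(\partial_u)\,\psi(\beta)(A,Q,v,u)\bigr]_{u=0}$. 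Substituting $z=2\pi i u$ and using that $f_M^k$ is homogeneous of degree $nk$ (so $f_M^k(\partial_u)=(2\pi i)^{nk}f_M^k(\partial_z)$), together with $\psi(\beta)(A,Q,v,u)=(2\pi i)^n\Psi(\beta)(A,Q,v)(z)$, every power of $2\pi i$ cancels and the right-hand side becomes formally ``$f_M^k(\partial_z)$ applied to $\Psi(\beta)(A,Q,v)(z)$ and evaluated at $z=0$''. The content of the remaining step is to make this rigorous and to identify it with $\Delta^{(k)}\bigl[\Psi(\beta)(A,Q,v)(zM^t)\bigr]$, i.e.\ with $\Delta^{(k)}\Psi_{\mathrm{Z}}(A,M,Q,v)$.

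When $\Psi(\beta)(A,Q,v)$ happens to be a genuine power series $\sum_r F_r z^r\in\Q[[z]]$, this identification is exactly Lemma~\ref{l:Mtwist}: it gives $\Delta^{(k)}[F(zM^t)]=\sum_r F_r P_r^k$, and since $P_r^k=r!\,\coeff(f_M^k,z^r)$ one has $[f_M^k(\partial_z)F]_{z=0}=\sum_r F_r P_r^k$ as well. In general $\Psi(\beta)(A,Q,v)$ is only an element of $\Q((z))$ and genuinely has a pole at the origin (the $\mathscr{C}_1$-type sums of Lemma~\ref{l:psidef} do), so one cannot evaluate a differential operator there naively; here I would proceed by $\Q$-linearity and the inclusion--exclusion structure of the sets $B(A,w)$ to reduce to the building-block sums $G(u)$ of Lemma~\ref{l:psidef}, for which Sczech's Theorem~2 in \cite{sczech} gives explicit elementary formulas. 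For these explicit pieces one may clear the pole using the substitution $z\mapsto Z_j$ that defines $\Delta^{(k)}$ (Lemma~\ref{l:ppas}) and compare coefficients via the reciprocity law \eqref{e:crs} from the proof of Lemma~\ref{l:Mtwist}, thereby verifying the identity on each building block and hence in general. (That $\Psi_{\mathrm{Z}}$ is a cocycle, and not merely the cochain furnished by Proposition~\ref{p:psigdef}, is then inherited from the cocycle property of Sczech's $\tilde{\Psi}_{\mathrm{Z}}$.)

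I expect this last step — reconciling the ``$f_M^k(-\partial)$-then-evaluate-at-the-origin'' description of the Sczech side with the Shintani operator $\Delta^{(k)}$ acting on $\R((z))^{\hd}$ — to be the main obstacle, precisely because $\Psi(\beta)(A,Q,v)$ is honestly non-regular at the origin: the ``evaluation at $z=0$'' only makes sense through the coefficient-extraction regularization built into $\Delta^{(k)}$, and matching the two requires descending to Sczech's explicit formulas and some delicate bookkeeping of coefficients. By contrast the shift-versus-differentiation identity, the interchange of $f_M^k(\partial_u)$ with the conditionally convergent $Q$-summation, and the accounting of $2\pi i$ factors are all routine given the convergence statements already in Lemma~\ref{l:psidef} and Proposition~\ref{p:psigdef}.
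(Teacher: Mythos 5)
There is a genuine gap, and it sits exactly where you flag ``the main obstacle.'' You assert that $\Psi(\beta)(A,Q,v)$ ``genuinely has a pole at the origin'' and then only sketch a workaround (inclusion--exclusion over the $B(A,w)$, descent to the building blocks $G(u)$ of Lemma~\ref{l:psidef}, Sczech's explicit formulas, ``delicate bookkeeping''), which is never carried out. But the assertion is false, and recognizing this is the one nontrivial observation the proof needs: because $\beta(w)=((1,w_1),\dotsc,(n,w_n))$ selects precisely the columns $\varpi_i(A,x)$ that are \emph{not} orthogonal to $x$, the pairings $\langle x,\varpi_i\rangle$ are nonzero rationals with denominators bounded in terms of $A$ alone, so the denominators $\langle x-u,\varpi_i\rangle$ stay nonzero for \emph{all} $x$ once $u$ lies in a small enough neighborhood of $0$; combined with the uniform convergence of the $Q$-summation and of its termwise partial derivatives near $u=0$ (Sczech's Theorem~2, as invoked in Lemma~\ref{l:psidef} and Proposition~\ref{p:psigdef}), this shows $\psi(\beta)(A,Q,v,u)$ is holomorphic at $u=0$, hence $F=\Psi(\beta)(A,Q,v)$ is a \emph{regular} power series in $\R[[z_1,\dotsc,z_n]]$. (Individual building blocks such as the $\mathscr{C}_1$-sums do have poles --- that is what happens for $\alpha$ and produces the polar cocycle --- but for $\beta$ the assembled sum does not.)

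Once regularity is in hand, the identification you were worried about is immediate: writing $F=\sum_r F_r z^r$, Lemma~\ref{l:Mtwist} gives $\Delta^{(k)}F(zM^t)=\sum_r F_rP_r^k$, while your (correct) manipulation of the Sczech side --- the shift-versus-differentiation identity, termwise differentiation justified by uniform convergence, and the $2\pi i$ accounting --- gives $\tilde{\Psi}_{\mathrm{Z}}(A,f_M^k,Q,v)=\sum_r F_rP_r^k$ as well, since $P_r^k=r!\cdot\coeff(f_M^k(z),z^r)$. No coefficient regularization, no recourse to $Z_j$-substitutions or the reciprocity law is needed. So your treatment of the Sczech side is essentially the paper's, but as written the proof is incomplete: the step you defer is based on a mistaken premise, and the alternative route you propose through the non-regular building blocks is not executed and would in any case be needlessly heavy.
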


One can show directly from the definition (\ref{e:psiZpsibeta}) that $\Psi_{\mathrm{Z}} \in Z^{n-1}(\Gamma, \cF)$, but this follows
also from our proof of Theorem~\ref{t:compare} so we omit the details.

\begin{proof}  By the definition of $\beta$, the function $\psi(\beta)(A, Q, v, u)$ is well-defined for all $u$ in an open
neighborhood of $0$ in  $\C^n$.
 Therefore $F = \Psi(\beta)(A, Q, v)$ is a regular power series, i.e.\  $F(z) \in \R[[z_1, \dotsc, z_n]]$.
Hence if we write $F = \sum_r F_r z^r$ and $f_M^k(z) = \sum_r P_r^k z^r/r!$ as in (\ref{e:prdef}), then Lemma~\ref{l:Mtwist}
implies that 
\begin{equation} \label{e:beta1}
\Delta^{(k)} F(zM^t) = \sum_r F_r P_r^k. 
\end{equation} 
On the other hand, by \cite[Theorem 2]{sczech} the series 
\[
 F(u) = (2 \pi i )^{-n} \sum_{x \in \Z^n - \{0\}} e(\langle x, v \rangle) f(\varpi_1, \dotsc, \varpi_n)\left(x - \frac{u}{2 \pi i}\right)|_Q,
 \]
 as well as  those  formed by taking partial derivatives of the general term,  converge uniformly on a sufficiently small compact  neighborhood of $u=0$ in $ \C^n.$ 
Therefore  term by term differentiation is valid for $F$, and  after  applying $f_M^k( \partial_{u_1}, \dotsc, \partial_{u_n})$ and plugging in $u = 0$ we obtain
 \begin{equation} \label{e:beta2}
 \sum_r F_r P_r^k = (2 \pi i)^{-n(k+1)} \!\!\!\!\!\sum_{x \in \Z^n - \{0\}} e(\langle x, v \rangle) f_M^k( - \partial_{x_1}, - \partial_{x_2}, \dotsc, - \partial_{x_n})(f(\varpi_1, \dotsc, \varpi_n))(x)|_Q. 
 \end{equation}
The right side of (\ref{e:beta2}) is the definition of $\tilde{\Psi}_{\mathrm{Z}}(A, f_M^k, Q, v)$,
so combining (\ref{e:beta1}) and (\ref{e:beta2}) gives the desired equality
\[ \Delta^{(k)} F(zM^t) =  \tilde{\Psi}_{\mathrm{Z}}(A, f_M^k, Q, v).  \]
\end{proof}

\begin{prop} \label{p:shinalpha}
Let $\alpha(w) = ((1, 1), (2,1), \dotsc, (n,1))$.  Then
\[ \Psi_{\mathrm{Sh}}^+(A, M, Q, v)+\Psi_{\mathrm{P}}(A, M, Q, v) = \Psi(\alpha)(A, Q, v)(zM^t). \]
\end{prop}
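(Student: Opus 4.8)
The plan is to unwind the formalism of Section~\ref{s:genscz} for the constant function $\alpha$. Since $\alpha(w)=((1,1),\dots,(n,1))$ does not depend on $w$, we have $\tau(A,\alpha(w))=(\sigma_1,\dots,\sigma_n)$ where $\sigma_i$ is the leftmost column of $A_i$, and because the sets $B(A,w)$ partition $\Z^n\setminus\{0\}$, the defining sum (\ref{e:psigdef2}) collapses to
\[
\psi(\alpha)(A,Q,v,u)=\sum_{x\in\Z^n\setminus\{0\}}e(\langle x,v\rangle)\,f(\sigma_1,\dots,\sigma_n)(x-u)|_Q .
\]
If the $\sigma_i$ are linearly dependent then $\det(\sigma_1,\dots,\sigma_n)=0$, so $f(\sigma_1,\dots,\sigma_n)\equiv 0$ and $\Phi_{\mathrm{Sh}}(A)(\pm Q)=0=\Psi_{\mathrm{P}}(A,M,Q,v)$, and both sides vanish; so we may assume the $\sigma_i$ are independent. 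All quantities in sight are invariant under rescaling the $\sigma_i$ by positive rationals (for $\psi(\alpha)$ because $f$ is invariant under scaling a single argument, and for $\Psi_{\mathrm{Sh}}^+$ and $\Psi_{\mathrm{P}}$ by a direct check), so we may further assume $\sigma:=(\sigma_1\mid\cdots\mid\sigma_n)\in M_n(\Z)\cap\Gamma$. I would then split off the $x=0$ term: writing $G(u):=\sum_{x\in\Z^n}e(\langle x,v\rangle)f(\sigma_1,\dots,\sigma_n)(x-u)|_Q$ for the full-lattice $Q$-summation, one has
\[
\psi(\alpha)(A,Q,v,u)=G(u)-\frac{(-1)^n\det(\sigma)}{\langle u,\sigma_1\rangle\cdots\langle u,\sigma_n\rangle},
\]
and under the passage of Proposition~\ref{p:psigdef} (set $2\pi i u=zM^t$, divide by $(2\pi i)^n$) the second summand converts, by (\ref{defPsipolar}), to exactly $\Psi_{\mathrm{P}}(A,M,Q,v)$. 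Thus it remains to identify the power series attached to $G$ with $\Psi_{\mathrm{Sh}}^+(A,M,Q,v)$.

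For the term $G$ I would run the change of variables $x'=x\sigma$ followed by Fourier expansion over $\Z^n/\Z^n\sigma$, exactly as in the proof of Lemma~\ref{l:psidef} with $\tau=\sigma$ and $H=\Q^n$; this rewrites $G(u)$ as $\sgn(\det\sigma)$ times a finite sum of Sczech's functions $\mathscr{C}_1$ (the index $|\det\sigma|$ of the sublattice absorbing the $|\det\sigma|$ in the numerator of $f$, leaving the sign). Sczech's explicit evaluation of $\mathscr{C}_1$ in \cite[Theorem 2]{sczech} then shows that, after the substitution $2\pi i u\mapsto zM^t$, the resulting element of $\R((z))^{\hd}$ is a $\Q$-linear combination of generating series $\sum_y e^{(zM^t)y}$ over integer points of translated half-open simplicial cones $a+\Z_{\ge 0}\sigma_1+\cdots+\Z_{\ge 0}\sigma_n$ and their faces; reorganised through the parallelepiped decomposition (\ref{e:cqdecomp0}) of Section~\ref{s:bookkeeping}, this is precisely $h$ applied, in its first variable, to the characteristic function of a perturbed cone on $\sigma_1,\dots,\sigma_n$. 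The crucial point is that the partial sums defining the $Q$-summation depend on $Q$ only through $|Q(x)|$, hence are invariant under $Q\mapsto -Q$; consequently each boundary face $C_I$ enters with the symmetrised weight $\tfrac12(\weight_Q(C_I)+\weight_{-Q}(C_I))$, so that the combination equals $\tfrac12\big(h(c_Q(\sigma_1,\dots,\sigma_n),v)(zM^t)+h(c_{-Q}(\sigma_1,\dots,\sigma_n),v)(zM^t)\big)$. Multiplying by the $\sgn(\det\sigma)$ from the Fourier step and comparing with (\ref{e:phisdef}) and the definition of $\Psi_{\mathrm{Sh}}^+$ gives the identification, and together with the first paragraph yields $\Psi(\alpha)(A,Q,v)(zM^t)=\Psi_{\mathrm{Sh}}^+(A,M,Q,v)+\Psi_{\mathrm{P}}(A,M,Q,v)$.

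The main obstacle is precisely this last identification: matching Sczech's closed-form evaluation of $\mathscr{C}_1$ — which is organised according to the sign pattern of the coordinates of $Q'=\sigma^{-1}Q$ and is written through Bernoulli-type factors — with the combinatorial description of the $Q$-perturbed cone $C_Q(\sigma_1,\dots,\sigma_n)$ and its half-open parallelepiped decomposition (\ref{e:cqdecomp0}), keeping exact track of all signs and of the averaging over $\pm Q$. Conceptually both quantities compute the generating function of the lattice points of the $Q$-perturbed simplicial cone on $\sigma_1,\dots,\sigma_n$, one obtained analytically via Sczech's conditional summation and one combinatorially; one may alternatively check the identity coordinate by coordinate after the Fourier reduction to the standard cone $\Z_{\ge 0}^n$, where it is the classical partial-fraction / geometric-series identity already visible when $n=1$ (there $\Psi(\alpha)$, $\Psi_{\mathrm{Sh}}^+$ and $\Psi_{\mathrm{P}}$ come out to $-\tfrac12-\tfrac1{e^z-1}+\tfrac1z$, $-\tfrac12-\tfrac1{e^z-1}$ and $\tfrac1z$ respectively).
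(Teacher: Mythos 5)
You follow exactly the route of the paper's own proof: since $\alpha$ is constant, the defining sum collapses to the full lattice sum for $f(\sigma_1,\dotsc,\sigma_n)$; one reduces to $\sigma\in M_n(\Z)\cap\Gamma$, applies the change of variables and Fourier expansion from the first paragraph of the proof of Lemma~\ref{l:psidef}, extracts the polar term (your peeling off of the $x=0$ term before the Fourier step, instead of after it, is a harmless variant and does convert to $\Psi_{\mathrm{P}}$ of (\ref{defPsipolar}) under $2\pi i\,u=zM^t$), and then appeals to Sczech's evaluation of $\mathscr{C}_1$ to identify what remains with the generating function of the $Q$-perturbed cone. Your handling of the degenerate case and of the rescaling to integral $\sigma$ is fine and matches what the paper does implicitly.

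The difficulty is that the step you yourself label ``the main obstacle'' is the actual content of the proposition, and the argument offered in its place does not suffice. Invariance of the $Q$-summation under $Q\mapsto -Q$ only shows that the value of $\psi(\alpha)$ is symmetric in $Q$; it does not show that this value equals the symmetrized cone generating function, i.e.\ that each face $C_I$ contributes with the average of its $Q$- and $(-Q)$-weights. That identification comes from the explicit shape of Sczech's formula: one needs $\mathscr{C}_1(u,v',Q')=\tfrac12\bigl(\mathscr{H}(u,v',Q')+\mathscr{H}(u,v',-Q')\bigr)$ as in (\ref{C1Q}), and then the expansion of the product defining $\mathscr{H}$, in which the factors $\tfrac{1+\sign Q'_j}{2}$ are matched against $\weight(C_I)$ from (\ref{e:weight}) and the exponential factors against the parallelepiped points $a_I$ of (\ref{e:aIdef}); this bookkeeping, with the signs $s_\sigma$ and the powers of $2\pi i$, is precisely the chain (\ref{eqhfinal})--(\ref{idinhpsialpha}), and it is where $h(\Phi_{\mathrm{Sh}}^+(A)(Q),v)$ actually appears. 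Likewise, the proposed coordinate-by-coordinate check ``after Fourier reduction to the standard cone'' is only legitimate once Sczech's product formula is in hand: the $Q$-summation is a joint limit over the region $|Q(x)|<t$ and does not a priori factor over coordinates, so this alternative again presupposes \cite[Theorem 2]{sczech} together with the same matching. In short, your skeleton coincides with the paper's proof, but the decisive computation is asserted rather than carried out, and the symmetrization-by-invariance heuristic cannot replace it.
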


\begin{proof}  Attached to $A$ and $\alpha$ is the square matrix $\sigma=(\sigma_1, \ldots, \sigma_n)=(A_{11}, \ldots, A_{n1})$.
Arguing as in the first paragraph of the proof of Lemma~\ref{l:psidef}, we have 
\begin{equation}\label{eqpsialpha}
\psi(\alpha)(A, Q, v, u) = \ s_\sigma  \sum_{y\in \Z^n/\sigma \Z^n}\sum_{x' \in  \Z^n - \{0\}} \frac{e(\langle x', \sigma^{-1}(v+y)\rangle) }{ (x'_1-u'_1)\dotsc (x'_n-u'_n)}\mid_{Q'}.
\end{equation}
 For any  given $y\in \Z^n,$ the inner sum is identified in Sczech's  notation  \cite[(3)]{sczech}   as 
\begin{align}\label{eqC1def}
(-1)^n \sum_{x\in  \Z^n - \{0\}} \frac{e(\langle x, \sigma^{-1}(v+y)\rangle) }{ (x_1-u'_1)\dotsc (x_n-u'_n)}\mid_{Q'}= \frac {-1} {u'_1\dotsc u'_n}+\mathscr C_1 (u', \sigma^{-1}(v+y), Q').
 \end{align}

 To express this last quantity  in elementary terms, we write  $v'=\sigma^{-1}(v+y)$  and let $J=J(y)$  denote the set of indices $j\in \{1, \ldots, n\}$ with $v'_j\in \Z.$ We  then invoke  
  \cite[Theorem 2]{sczech} to obtain for $u\in (\C-\Z)^n$ :
\begin{align}\label{C1Q}
\mathscr C_1 (u, v', Q')=& \ \frac 12 (\mscr{H}(u,v', Q')+(-1)^n \mscr{H}(-u, -v', Q'))\nonumber
\\= &\ \frac 12 (\mathscr{H}(u,v', Q')+ \mscr{H}(u, v', -Q')),\\ 
\textrm{where } \ \mathscr{H}(u,v',Q')= &\   (-2\pi i)^n \prod_{j\in J} \left(\frac{e(u_j)}{1-e(u_j)}+\frac{1+\sign Q'_j} 2\right) \prod_{j\notin J} \frac{e( u_j \{v'_j\})} {1-e(u_j)}.\nonumber
\end{align}
Fix a subset $I_0 \subset J$, select the factor  $\frac{(1+\sign Q'_j)} 2$ for $j\in I_0$, and  expand the product for $\mscr{H}(u,v', Q')$ accordingly. Writing $I=\overline{I_0}$
one obtains 
     \begin{align} 
   \mscr{H} (u,v', Q')= &(-2\pi i )^n \sum_{I_0 \subset J} \prod_{j\in I_0}\frac{(1+\sign Q'_j)} 2 \prod_{j\in J-I_0} \frac{e(u_j)}{1-e(u_j)}  \prod_{j\notin J}  \frac{e(u_j\{v'_j\})}{1-e(u_j)}\nonumber
\\
   =&\  (-2\pi i)^n \sum_{I\supset \overline{J}} \weight(C_I) \frac{e(u\cdot \sigma^{-1}a_I)}{\prod_{j \in I} 1-e(u_j)}.
\label{eqhfinal} \end{align}  
The last line follows from the formula (\ref{e:weight}) for $\weight(C_I)$ and  the definition (\ref{e:aIdef}) of the point $a_I \in\sP_{I}.$ 
Collecting (\ref{eqpsialpha}), (\ref{eqC1def}), (\ref{C1Q}) and (\ref{eqhfinal})  we arrive at 
\begin{equation} \label{idpsialpha}
\psi(\alpha)(A, Q, v, u) = \frac{(-1)^{n+1} \det\sigma}{N (u\sigma)} +  (2\pi i)^n s_\sigma\sum_{\stack{y\in \Z^n/\sigma \Z^n}{I\supset
 \overline{J(y)}}} \textrm{weight}^{\!+}(C_{I}) \frac{e(u\cdot a_I)}{\prod_{j\in I} 1-e((u\sigma)_j)},
\end{equation}
 where   $\textrm{weight}^{\!+}(C_I)$ is the average of the weights for $Q$ and $-Q.$ The identity (\ref{idpsialpha})  holds for all $u\in \C^n$ as long as the vector $u\sigma$ has no component in $\Z.$

Unwinding the argument of Section~\ref{s:bookkeeping} to go from (\ref{e:cqdecomp0}) to (\ref{e:cqdecomp}), we obtain
\begin{align} \label{idinhpsialpha}
\psi(\alpha)(A, Q, v, u) =&\  \frac{(-1)^{n+1} \det\sigma}{N (u\sigma)} +  (2\pi i)^n s_\sigma \!\!\!\!\! \sum_{I\subset \{1, \ldots, n\}} \!\!\!\!\! \textrm{weight}^{\!+}(C_{I}) \!\!\!\!\! \sum_{a\in  \sP_{I}\cap (v+\Z^n) }\frac{e(u\cdot a)}{\prod_{j\in I} 1-e((u\sigma)_j)}\nonumber\\
=&\  \frac{(-1)^{n+1} \det\sigma}{N (u\sigma)} +  (2\pi i)^n s_\sigma \!\!\!\!\! \sum_{I\subset \{1, \ldots, n\}} \!\!\!\!\! \textrm{weight}^{\!+}(C_{I}) h(C_{I},v)(2\pi i u)\nonumber\\
=& \  \frac{(-1)^{n+1} \det\sigma}{N (u\sigma)} +  (2\pi i)^n\,  h(\Phi_{\mathrm{Sh}}^+(A) ( Q) ,v)(2\pi i u),
\end{align}
 where the superscript ``+"  again denotes the average of the contributions of $Q$ and $-Q.$  This implies   the desired equality
  between power series using  the definitions of $\Psi_{\mathrm{Sh}},\Psi_{\mathrm{P}}$, and $\Psi(\alpha)$ given in 
 (\ref{e:psisdef}), (\ref{defPsipolar}), and (\ref{e:psieq}) respectively.
 \end{proof}

\subsection{An explicit coboundary}

Recall the notation $S_k = \{1, \dotsc, k\}$, $S = S_n$.
Extending by linearity, we can define $\Psi(g)$ for any map $g\colon S^k \rightarrow \Z[(S_k \times S)^n]$.  In fact, more is
true; if we denote by $\partial\colon \Z[(S_k \times S)^{n+1}] \rightarrow \Z[(S_k \times S)^{n}]$ the usual differential
\[ \partial([t_0, \dotsc, t_{n}]) = \sum_{i=0}^{n} (-1)^i [t_0, \dotsc, \hat{t}_i, \dotsc, t_n], \]
then $\Psi(g)$ is well-defined for any map $g\colon S^k \rightarrow \Z[(S_k \times S)^n]/\Image(\partial)$.
This follows from the cocycle relation (\ref{e:fcoc}), which implies that $f(\tau(A, t)) = 0$ for
$t \in \Image(\partial).$

We will show that the map $\beta - \alpha\colon S^n \rightarrow \Z[(S \times S)^n]/\Image(\partial)$ is a coboundary
in the following sense.  
 For $i = 1, \dotsc, n$, let $\hat{e}_i: S_{n-1} \rightarrow S$ be the unique increasing map whose image does not contain $i$.  Given 
\begin{equation} \label{e:hdef}
h: S^{n-1} \rightarrow \Z[(S_{n-1} \times S)^n], 
\end{equation}
 define $d h: S^n \rightarrow \Z[(S \times S)^n] $
by
\begin{equation} \label{e:ddef}
 (dh)(w_1, \dotsc, w_n) = \sum_{i=1}^n (-1)^i (\hat{e}_i \times \id)(h(\hat{w}_i)). 
 \end{equation}
We  will show that there exists an $h$ such that $\beta - \alpha = d h \pmod{\Image(\partial)}$.
Let us indicate why this completes the proof of Theorem~\ref{t:compare}.  For any $h$ as in (\ref{e:hdef}),
Proposition~\ref{p:psigdef} yields 
 a homogeneous cochain  $\Psi(h) \in C^{n-2}(\Gamma, \cF)$.  It is easily checked
from (\ref{e:ddef}) that $d(\Psi(h)) = \Psi(dh)$.  Therefore, combining Propositions~\ref{p:sczbeta} and \ref{p:shinalpha},
we obtain 
\[ \Psi_{\mathrm{Z}} - \Psi_{\mathrm{Sh}}^+-\Psi_{\mathrm{P}} = \Psi(\beta) - \Psi(\alpha) = d \Psi(h) \]
as desired.
It remains to define the appropriate function $h$.

\begin{prop} \label{e:defhcoboundray}For $i=1, \dotsc, n-1$, define $h_i\colon S^{n-1} \rightarrow \Z[(S_{n-1} \times S)^n] $
by
\[ h_i(w) = 
\begin{cases}
[(1,w_1), \dotsc, (i-1, w_{i-1}), (i, 1), (i, w_i), (i+1, 1),  \dotsc, (n-1,1)], &  w_i > 1 \\
0, &  w_i = 1,
\end{cases}
\]
where  $w = (w_1, \dotsc, w_{n-1})$.
Let $h = \sum_{i=1}^{n-1} (-1)^i h_i.$  Then $\beta - \alpha \equiv dh \pmod{\Image(\partial)}$.
\end{prop}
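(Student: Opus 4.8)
The plan is to establish the congruence pointwise: for a fixed $w = (w_1, \dotsc, w_n) \in S^n$ I will show that $\beta(w) - \alpha(w) - (dh)(w)$ lies in $\Image(\partial) \subset \Z[(S\times S)^n]$. The main device is a telescoping argument. For $0 \le k \le n$ set
\[ \gamma_k(w) = ((1, w_1), \dotsc, (k, w_k), (k+1, 1), \dotsc, (n, 1)) \in (S\times S)^n, \]
so that $\gamma_0(w) = \alpha(w)$, $\gamma_n(w) = \beta(w)$, and $\beta(w) - \alpha(w) = \sum_{k=1}^n \bigl(\gamma_k(w) - \gamma_{k-1}(w)\bigr)$. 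The tuples $\gamma_{k-1}(w)$ and $\gamma_k(w)$ differ only in their $k$th entry, which is $(k,1)$ in the former and $(k, w_k)$ in the latter; in particular $\gamma_k(w) = \gamma_{k-1}(w)$ whenever $w_k = 1$, so only the indices $k$ with $w_k > 1$ contribute.

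For each such $k$ I would introduce the $(n+1)$-tuple
\[ T_k = [(1, w_1), \dotsc, (k-1, w_{k-1}), (k, 1), (k, w_k), (k+1, 1), \dotsc, (n, 1)] \]
and expand $\partial T_k$. Among its $n+1$ deletion terms, deleting $(k, w_k)$ produces $\gamma_{k-1}(w)$ and deleting $(k,1)$ produces $\gamma_k(w)$; these two entries are adjacent, so the corresponding terms occur with opposite signs. Since $\partial T_k \in \Image(\partial)$, solving for $\gamma_k(w) - \gamma_{k-1}(w)$ exhibits this difference, modulo $\Image(\partial)$, as an explicit signed sum of the remaining $n-1$ deletion terms of $\partial T_k$. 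Each remaining term deletes either an entry $(j, w_j)$ with $j < k$ or an entry $(j, 1)$ with $j > k$, yielding an $n$-tuple that carries every label in $S$ except $j$ and in which $k$ occurs twice. The heart of the proof is to recognize each such term on the nose: deleting $(j, w_j)$ with $j < k$ gives $(\hat{e}_j \times \id)\bigl(h_{k-1}(\hat{w}_j)\bigr)$, while deleting $(j, 1)$ with $j > k$ gives $(\hat{e}_j \times \id)\bigl(h_k(\hat{w}_j)\bigr)$. I expect this identification to be the main obstacle: it requires unwinding the definition of the order-preserving injection $\hat{e}_j \colon S_{n-1} \to S$ with image $S \setminus \{j\}$ and verifying that it matches the slots of $h_{k-1}(\hat{w}_j)$, resp.\ $h_k(\hat{w}_j)$, one-by-one with those of the deletion term, the key point being that the coordinate of $\hat{w}_j$ indexing the repeated slot is exactly $w_k$. (When $w_k = 1$ both $h$-values vanish, matching the fact that such $k$ contribute nothing; this is why the convention $h_i(w) = 0$ for $w_i = 1$ is precisely the correct one, and it is also why one cannot simply discard degenerate tuples, as these need not lie in $\Image(\partial)$.)

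Finally I would substitute these identifications into the telescoping sum and reindex. Combining the sign $(-1)^k$ that appears when solving for $\gamma_k(w) - \gamma_{k-1}(w)$ with the signs from the expansion of $\partial T_k$, one gets a double sum over pairs $(j,k)$ with $j \ne k$ and $w_k > 1$; putting $m = k$ when $k < j$ and $m = k-1$ when $k > j$ — so that $k$ is precisely the label repeated in $(\hat{e}_j \times \id)(h_m(\hat{w}_j))$ — turns this into $\sum_{j=1}^n (-1)^j \sum_{m=1}^{n-1} (-1)^m (\hat{e}_j \times \id)\bigl(h_m(\hat{w}_j)\bigr) = (dh)(w)$ by (\ref{e:ddef}), with the omitted (vanishing) terms matching on both sides. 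To keep the many sign conventions straight I would work out the cases $n = 2$ and $n = 3$ in full before carrying out the general reindexing.
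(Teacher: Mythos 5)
Your proof is correct, and it is essentially the paper's argument organized dually: both interpolate from $\alpha$ to $\beta$ one slot at a time and rest on the same bookkeeping identification of the correction (deletion) terms with $(\hat{e}_j \times \id)(h_m(\hat{w}_j))$, including the matching of the vanishing terms when $w_k = 1$. The only difference is presentational — the paper runs an induction on $m$, describing $\alpha$ plus the partial sums of the $dh_i$ in closed form and invoking a boundary relation at the final step, whereas you spend one explicit boundary $\partial T_k$ per telescoping step and then reindex the resulting double sum — so the content is the same, though your arrangement makes the role of $\Image(\partial)$ and of the convention $h_i(w)=0$ for $w_i=1$ a bit more transparent.
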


\begin{remark}
For $n=2$, the map $h$ is given by  $h(1) = 0$ and $h(2) = -[(1,1), (1,2)]$.
  This is the formula stated by Sczech \cite[Page 371]{sczcom}.
\end{remark}

\begin{proof}
One shows by induction on $m$  that for $m = 1, \dotsc, n$, 
\[  \left(\alpha  + \sum_{i=1}^{m-1} dh_i \right)(w_1, \dotsc, w_n) \]
is equal to 
\begin{align*}
&  [(1, w_1), \dotsc, (m-1, w_{m-1}), (m, 1), \dotsc, (n,1)]  + \\
&  \sum_{i=1}^{m-1} (-1)^{i+m-1} [(1, w_1), \dotsc, \widehat{(i, w_i)}, \dotsc, (m-1, w_{m-1}), (m, 1), 
(m, w_{m}), (m+1, 1), \dotsc, (n,1)].
\end{align*}
For $m=n$, this yields
\begin{align*}
 (\alpha + dh)(w_1, \dotsc, w_n) &=   [(1, w_1), \dotsc, (n-1, w_{n-1}), (n, 1)]  + \\
& \ \ \ \ \  \sum_{i=1}^{n-1} (-1)^{i+n-1} [(1, w_1), \dotsc, \widehat{(i, w_i)}, \dotsc, (n-1, w_{n-1}), (n, 1), 
(n, w_{n})] \\
 &\equiv  [(1,w_1), \dotsc, (n,w_n)]  \pmod{\Image(\partial)} 
 \end{align*}
  as desired.
\end{proof}

\section{Integral Shintani cocycle} \label{s:smooth}

In this section we introduce an auxiliary prime $\ell$ and enact a smoothing process on our cocycle
$\Psi_{\mathrm{Sh}}$ to define a cocycle $\Psi_{\mathrm{Sh},\ell}$ on a certain congruence subgroup of $\Gamma$.
The smoothed cocycle $\Psi_{\mathrm{Sh},\ell}$ satisfies an integrality property
refining the rationality result stated in Theorem~\ref{t:rational}.   This refinement is stated in Theorem~\ref{t:smooth} below.
The key technical result allowing the proof of Theorem~\ref{t:smooth} is the explicit formula for  $\Psi_{\mathrm{Sh},\ell}$
given in Theorem~\ref{t:lexplicit}.
We provide the details of the proof of Theorem~\ref{t:lexplicit} here; the deduction of Theorem~\ref{t:smooth} 
is given in \cite[\S2.7]{pcsd}.

The arithmetic applications regarding classical and $p$-adic $L$-functions of totally real fields stated in the Introduction as Theorems~\ref{t:integral}, \ref{t:padic}, and~\ref{t:oov} follow {\em mutatis mutandis} as in \cite{pcsd} from Theorem~\ref{t:smooth}.  See \S3-5 of {\em loc.~cit.~}for the proofs.

\subsection{Definition of the smoothing}
Fix a prime $\ell$.  Let $\Z_{(\ell)} = \Z[1/p, p \neq \ell]$ denote the localization of $\Z$ at the prime ideal $(\ell)$.
Let
\[ \Gamma_\ell = \Gamma_0(\ell \Z_{(\ell)}) = \{ A \in \GL_n(\Z_{(\ell)}):  \ell \mid A_{j1} \text{ for } j > 1   \}.\]
 Let $\pi_\ell = \diag(\ell, 1, 1, \dotsc, 1).$  Note that if $A \in \Gamma_\ell$, then $\pi_\ell A \pi_\ell^{-1} \in \GL_n(\Z_{(\ell)}).$

  For any $\Psi \in Z^{n-1}(\Gamma, \cF)$, define a smoothed homogeneous cocycle $\Psi_{\ell} \in Z^{n-1}(\Gamma_\ell, \cF)$ by
\begin{equation}
 \label{e:psildef}
 \Psi_\ell(A, M, Q, v) :=  \Psi(\pi_\ell A \pi_\ell^{-1}, \pi_\ell^{-1}M,  \pi_\ell Q , \pi_\ell v) - \ell \Psi(A, M, Q, v)
 \end{equation}
for $A = (A_1, \dots, A_n) \in \Gamma_\ell^n$.  
The following is a straightforward computation using the fact that $\Psi$ is a cocycle for $\Gamma$. 
\begin{prop}  We have $\Psi_\ell \in Z^{n-1}(\Gamma_\ell, \cF)$.
\end{prop}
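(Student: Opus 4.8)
The plan is to check the three ingredients of the assertion $\Psi_\ell\in Z^{n-1}(\Gamma_\ell,\cF)$: that $\Psi_\ell(A,\cdot,\cdot,\cdot)$ lies in $\cF$, that $A\mapsto\Psi_\ell(A)$ is $\Gamma_\ell$-equivariant, and that $d\Psi_\ell=0$ for the homogeneous coboundary $d$. It is convenient to write $\Psi_\ell=\Psi^{(1)}-\ell\,\Psi^{(2)}$, where $\Psi^{(2)}$ is the restriction of $\Psi$ to $\Gamma_\ell$ — automatically a cocycle valued in $\cF$, since the restriction of a cocycle is a cocycle and $\cF$ is a $\Gamma_\ell$-module by restriction of the $\Gamma$-action — and
\[ \Psi^{(1)}(A,M,Q,v)=\Psi(\pi_\ell A_1\pi_\ell^{-1},\ldots,\pi_\ell A_n\pi_\ell^{-1},\;\pi_\ell^{-1}M,\pi_\ell Q,\pi_\ell v). \]
Thus the only new object is $\Psi^{(1)}$; throughout one uses the fact recorded just before~(\ref{e:psildef}), namely $\pi_\ell\Gamma_\ell\pi_\ell^{-1}\subseteq\GL_n(\Z_{(\ell)})\subseteq\Gamma$, so that the conjugated tuples are legitimate arguments of $\Psi$.

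The cocycle identity is immediate. Since $d$ acts only on the $\Gamma$-slots and leaves $(M,Q,v)$ untouched, for $(A_0,\ldots,A_n)\in\Gamma_\ell^{n+1}$ one has $d\Psi^{(1)}(A_0,\ldots,A_n)(M,Q,v)=(d\Psi)(\pi_\ell A_0\pi_\ell^{-1},\ldots,\pi_\ell A_n\pi_\ell^{-1})(\pi_\ell^{-1}M,\pi_\ell Q,\pi_\ell v)$, which vanishes because the conjugated $(n{+}1)$-tuple lies in $\Gamma^{n+1}$ and $d\Psi=0$ on $\Gamma$. Likewise $d\Psi^{(2)}=0$, whence $d\Psi_\ell=0$.

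For $\Gamma_\ell$-equivariance, fix $B\in\Gamma_\ell$ and factor $\pi_\ell BA_i\pi_\ell^{-1}=(\pi_\ell B\pi_\ell^{-1})(\pi_\ell A_i\pi_\ell^{-1})$ with $\pi_\ell B\pi_\ell^{-1}\in\Gamma$. The $\Gamma$-equivariance of $\Psi$ gives $\Psi(\pi_\ell BA_1\pi_\ell^{-1},\ldots)=(\pi_\ell B\pi_\ell^{-1})\cdot\Psi(\pi_\ell A_1\pi_\ell^{-1},\ldots)$ in $\cF$, and it remains to verify the intertwining relation
\[ \big((\pi_\ell B\pi_\ell^{-1})\cdot g\big)(\pi_\ell^{-1}M,\pi_\ell Q,\pi_\ell v)=\big(B\cdot\tilde g\big)(M,Q,v)\qquad(g\in\cF,\ B\in\Gamma_\ell), \]
where $\tilde g(M,Q,v)=g(\pi_\ell^{-1}M,\pi_\ell Q,\pi_\ell v)$; granting this, $\Psi^{(1)}(BA)=B\cdot\Psi^{(1)}(A)$, the corresponding statement for $\Psi^{(2)}$ being just the equivariance of $\Psi$. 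I would prove the intertwining relation by writing out the explicit formula~(\ref{e:vaction}) for both sides: choose integral scalar multiples $A'$ of $\pi_\ell B\pi_\ell^{-1}$ and $A''$ of $B$, compatibly (e.g.\ $A'=\pi_\ell A''\pi_\ell^{-1}$ after clearing a common denominator), and match the sums over $\Z^n/A'\Z^n$ and $\Z^n/A''\Z^n$ by a linear change of variables, reducing any residual $\cV$-torsion discrepancy caused by $\pi_\ell$ having non-trivial kernel on $\cV=\Q^n/\Z^n$ via the distribution relation~(\ref{e:dist}).

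Finally, membership in $\cF$: one must see that $\Psi_\ell(A,\cdot,\cdot,\cdot)$ satisfies~(\ref{e:dist}) for the relevant integers $\lambda$ (it suffices to treat $\lambda$ prime to $\ell$, the scalar matrices $\lambda I$ with $\ell\mid\lambda$ not lying in $\Gamma_\ell$). Applying~(\ref{e:dist}) to $\Psi$ inside $\Psi^{(1)}$ turns $\Psi^{(1)}(A,M,Q,v)$ into a sum over $\{w:\lambda w=v\}$ twisted by $\pi_\ell$; since $\gcd(\lambda,\ell)=1$ the map $w\mapsto\pi_\ell w$ permutes these $\lambda$-torsion cosets of $\cV$, and after this bijection the sum is precisely the one demanded of $\Psi^{(1)}$ by~(\ref{e:dist}), while the term $\ell\,\Psi^{(2)}$ already lies in $\cF$ and passes through unchanged. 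I expect this last point — the explicit bookkeeping of the finite sets of $\cV$-torsion elements and the check that $w\mapsto\pi_\ell w$ is the required weight-preserving bijection — to be the only place needing genuine care, consistent with the statement being labelled a straightforward computation.
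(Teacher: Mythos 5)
Your skeleton---splitting $\Psi_\ell$ into the $\pi_\ell$-twisted term and $\ell$ times the restriction of $\Psi$, getting the cocycle identity for free, and reducing $\Gamma_\ell$-equivariance to an intertwining identity checked by unwinding (\ref{e:vaction})---is surely the ``straightforward computation'' the paper has in mind, and your treatment of the cocycle identity is fine. But at both places you yourself flag as delicate, the mechanism you invoke is not the one that works. In the equivariance step the distribution relation cannot repair the ``$\cV$-torsion discrepancy'': (\ref{e:dist}) relates values at different $(M,Q)$-arguments, whereas what you need is an exact matching of the two index sets in (\ref{e:vaction}). What makes the matching work is the congruence condition defining $\Gamma_\ell$: for $B\in\Gamma_\ell$ choose $\lambda$ prime to $\ell$ with $A''=\lambda B\in M_n(\Z)$ (possible since $B\in\GL_n(\Z_{(\ell)})$); then $A'=\pi_\ell A''\pi_\ell^{-1}$ is integral because $\ell\mid B_{j1}$ for $j>1$, and $w\mapsto\pi_\ell w$ maps $\{w\in\cV:A''w=v\}$ bijectively onto $\{u\in\cV:A'u=\pi_\ell v\}$ because $\det B\in\Z_{(\ell)}^\times$ together with the congruence condition forces $\ell\nmid A''_{11}$, so $(j/\ell)A''e_1\in\Z^n$ implies $\ell\mid j$; injectivity plus $|\det A'|=|\det A''|$ then gives the bijection. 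This is exactly where $\Gamma_\ell$, rather than all of $\Gamma$, is used, and it has to be said.

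The second gap is more serious: your step (3) does not establish membership in $\cF$, and in fact full membership fails, so the reduction ``it suffices to treat $\lambda$ prime to $\ell$'' cannot be justified by the remark about scalar matrices ($\cF$ is defined by imposing (\ref{e:dist}) for \emph{every} nonzero $\lambda$, irrespective of which group acts). To see the failure at $\lambda=\ell$, use the left $\Gamma$-equivariance of $\Psi$ to write $\Psi(\pi_\ell A\pi_\ell^{-1})=\pi_\ell\cdot h$ with $h=\Psi(A\pi_\ell^{-1})\in\cF$; unwinding (\ref{e:vaction}) for $\pi_\ell$ gives $\Psi(\pi_\ell A\pi_\ell^{-1},\pi_\ell^{-1}M,\pi_\ell Q,\pi_\ell v)=\sum_{j=0}^{\ell-1}h\bigl(M,Q,v+\tfrac{j}{\ell}e_1\bigr)$, and since these translations are $\ell$-torsion, the $\lambda=\ell$ instance of (\ref{e:dist}) for $\Psi_\ell(A)$ collapses (the $\ell\Psi(A)$-terms cancel from both sides) to the identity $\sum_{j=0}^{\ell-1}h(M,Q,v+\tfrac{j}{\ell}e_1)=\ell\,h(M,Q,v)$. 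For $\Psi=\Psi_{\mathrm{Sh}}$ one has $h=\Psi_{\mathrm{Sh}}(A)$ (the first columns of $A_i\pi_\ell^{-1}$ differ from those of $A_i$ by the positive factor $1/\ell$), so this identity says precisely that $\Psi_{\mathrm{Sh},\ell}(A,M,Q,v)=0$ identically, which is false in general by Theorem~\ref{t:lexplicit}; for a general $\Psi\in Z^{n-1}(\Gamma,\cF)$ there is likewise no reason for it. What is true, and what you should prove and then declare sufficient, is that $\Psi_\ell(A)$ satisfies (\ref{e:dist}) for all $\lambda$ prime to $\ell$---there your bijection $w\mapsto\pi_\ell w$ between the $\lambda$-torsion fibres is the correct argument---and that this weaker property is all the proposition needs, since for $\gamma\in\Gamma_\ell$ the action (\ref{e:vaction}) can always be computed with $\lambda$ prime to $\ell$ and its well-definedness uses only those relations. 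In other words the target module must be read as the ($\Gamma_\ell$-stable) space of functions satisfying (\ref{e:dist}) for $\lambda$ prime to $\ell$; as written, your proof asserts more than it checks, and the unchecked part is not true.
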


\subsection{An explicit formula} \label{s:explicit}

We will now give an explicit formula for $\Delta^{(k)} \circ\Psi_{\mathrm{Sh},\ell}$ for an integer $k \ge 0$ in terms of Dedekind sums.
For each integer $k \ge 0$, the Bernoulli polynomial $b_k(x)$ is defined by 
the generating function 
\begin{equation} \label{e:bdef}
 \frac{te^{xt}}{e^t - 1} = \sum_{k=0}^{\infty} b_k(x)\frac{t^k}{k!}. 
 \end{equation}
The following elementary lemma gives an explicit formula for the terms appearing in the definition of $h(C, v)$.
\begin{lemma} \label{l:bern}
 Consider the cone $C = C(\sigma_{i_1}, \dotsc, \sigma_{i_r})$ whose generators are a subset of the columns of the matrix $\sigma \in \Gamma$.
 Then
\begin{equation} \label{e:bern}
 \frac{e^{z\cdot a }}{(1 - e^{z \cdot \sigma_{i_1}})\cdots(1 - e^{z\cdot \sigma_{i_r}})} = (-1)^r \sum_{\stack{m_j=0}{r\text{-tuples }} }^\infty \prod_{j=1}^{r} 
\frac{b_{m_j}(\sigma^{-1}(a)_{i_j})}{m_j!} (z \sigma_{i_j})^{m_j-1} \prod_{i \not\in \{i_j\}} e^{(z \sigma_i)(\sigma^{-1}(a)_i)} . 
\end{equation}
\end{lemma}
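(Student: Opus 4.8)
The plan is to reduce (\ref{e:bern}) to the defining generating function (\ref{e:bdef}) of the Bernoulli polynomials, after a single change of basis that ``diagonalizes'' the exponential in the numerator with respect to the columns of $\sigma$. The key point is that invertibility of $\sigma$ lets us rewrite $a$ in terms of the $\sigma_i$.

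First I would write $a = \sum_{i=1}^{n} \sigma^{-1}(a)_i\,\sigma_i$, so that, abbreviating $z\sigma_i := z\cdot\sigma_i$,
\[
 z\cdot a = \sum_{i=1}^{n} \sigma^{-1}(a)_i\,(z\sigma_i), \qquad e^{z\cdot a} = \prod_{i=1}^{n} e^{(z\sigma_i)\,\sigma^{-1}(a)_i}.
\]
Splitting the product over $i$ into the indices $i_1,\dots,i_r$ and their complement isolates the factor $\prod_{i\notin\{i_j\}} e^{(z\sigma_i)\,\sigma^{-1}(a)_i}$ that already appears on the right-hand side of (\ref{e:bern}). Hence the claim reduces to the factored identity
\[
 \prod_{j=1}^{r} \frac{e^{(z\sigma_{i_j})\,\sigma^{-1}(a)_{i_j}}}{1-e^{z\sigma_{i_j}}} = (-1)^r \sum_{m_1,\dots,m_r\ge 0}\ \prod_{j=1}^{r}\frac{b_{m_j}(\sigma^{-1}(a)_{i_j})}{m_j!}\,(z\sigma_{i_j})^{m_j-1}.
\]

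Next I would treat the left-hand side as a product over $j$ of the one-variable identity
\[
 \frac{e^{xt}}{1-e^t} = -\sum_{m\ge 0}\frac{b_m(x)}{m!}\,t^{m-1},
\]
which is immediate from (\ref{e:bdef}): dividing (\ref{e:bdef}) by $t$ gives $e^{xt}/(e^t-1) = \sum_{m\ge 0} b_m(x) t^{m-1}/m!$, and $1-e^t = -(e^t-1)$. Applying this with $t = z\sigma_{i_j}$ and $x = \sigma^{-1}(a)_{i_j}$ for $j=1,\dots,r$ and multiplying the $r$ resulting Laurent series together produces the sign $(-1)^r$ and converts the product of the $r$ sums into the single sum over $r$-tuples $(m_1,\dots,m_r)$. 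Recombining with the complementary exponential product yields (\ref{e:bern}).

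I do not expect a real obstacle here; the only point requiring a word of care is the ring in which the manipulations take place. Since $\sigma$ is invertible, each column $\sigma_{i_j}$ is nonzero, so $z\sigma_{i_j}$ is a nonzero linear form and $1-e^{z\sigma_{i_j}} = -(z\sigma_{i_j})\,g_j(z)$ with $g_j$ a unit in $\Q[[z]]$. Consequently both sides of (\ref{e:bern}) are honest elements of $\Q[[z]]\bigl[(z\sigma_{i_1})^{-1},\dots,(z\sigma_{i_r})^{-1}\bigr]$, the negative powers arising precisely from the terms with $m_j=0$, and in each variable $z\sigma_{i_j}$ we are expanding a well-defined Laurent series with a single pole at $z\sigma_{i_j}=0$; so the rearrangement of the product of series into a sum over tuples is legitimate. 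Carrying out the bookkeeping of the change of basis $a = \sigma\,\sigma^{-1}(a)$ and matching the complementary product is then the whole of the argument.
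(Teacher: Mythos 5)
Your proof is correct, and it is exactly the argument the paper has in mind when it calls the lemma elementary (the paper gives no written proof): expand $a$ in the basis of the columns of $\sigma$, factor $e^{z\cdot a}$ accordingly, and apply the one-variable Bernoulli generating function $e^{xt}/(1-e^t)=-\sum_{m\ge 0}b_m(x)t^{m-1}/m!$ to each $t=z\sigma_{i_j}$. Your remark about working in $\Q[[z]]\bigl[(z\sigma_{i_1})^{-1},\dotsc,(z\sigma_{i_r})^{-1}\bigr]$ correctly justifies the formal rearrangement, so nothing is missing.
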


Define the periodic Bernoulli function $B_k(x) = b_k(\{x\})$, where
$\{x\} \in [0, 1)$ denotes the fractional part of $x$.
The functions $B_k$ are continuous for $k \neq 1$, i.e.\ $b_k(0) = b_k(1)$.
The function $B_1$ is not continuous at integers since $b_1(0) = -1/2$ and
$b_1(1) = 1/2$.  One can choose between these values by means of an auxiliary $Q \in \cQ$ as follows.

\begin{definition}
Let $e=(e_1,\ldots, e_n)$ be a vector of positive integers,  $Q \in \cQ$, and $v\in \cV.$
Let 
\begin{equation}\label{defJ}
J=\{1\leq j\leq n :  e_j=1  \textrm{ and } v_j\in \Z\}.\end{equation}
Define
$$\B_e(v,Q) = \left(\prod_{j\in J} \frac{-\sgn(Q_{j})} 2 \right)  \prod _{j\notin J}  B_{e_j}(v_j).$$
Note that this is $\B_e(v, -Q)$ in the notation of \cite{pcsd}.
\end{definition}

Let $\sigma \in M_n(\Z)$ have nonzero determinant.  Define the Dedekind sum
\begin{equation} \label{e:dedsum}
  \D(\sigma, e, Q, v) = 
\sum_{x\in  \Z^n/\sigma \Z^n }  \B_e(\sigma^{-1}(x+v), \sigma^{-1}Q). \end{equation}
Suppose that $\sigma$ has the property that $\sigma_\ell := \pi_\ell \sigma/\ell \in M_n(\Z)$. (This says that the bottom $n-1$ rows of
$\sigma$ are divisible by $\ell$.)  Write $\underline{e} = \sum e_i$.
Define the $\l$-smoothed Dedekind sum
\begin{equation}
\D_\l(\sigma, e,Q, v) = \D(\sigma_\ell, e,  \pi_\ell Q, \pi_\ell v)-\l^{1-n+\underline{e}} \D(\sigma,e,  Q, v) 
\label{e:dldef}
\end{equation}

We can now give a formula for $\Delta^{(k)} \circ\Psi_{\mathrm{Sh},\ell}$ in terms of the smoothed Dedekind sum $\D_\ell$.
Let $A = (A_1, \dotsc, A_n) \in \Gamma_\ell^n$, and let $\tilde{\sigma}$ denote the matrix consisting of the
first columns of the $A_i$.  
Assume that $\det \tilde{\sigma}\neq 0,$ and choose a scalar multiple $\sigma = \lambda \tilde{\sigma}$
with $\lambda$ an integer coprime to $\ell$ such that $\sigma \in M_n(\Z)$.  Note that since each $A_i \in \Gamma_\ell$,
it follows that $\sigma_\ell = \pi_\ell \sigma / \ell \in M_n(\Z)$ as well.

\begin{theorem} \label{t:lexplicit} We have
\[ \Delta^{(k)}\Psi_{\mathrm{Sh},\ell}(A, M, Q, v) = (-1)^n \sgn( \det \sigma)\sum_{r} \frac{P_r^k(\sigma)}{\ell^{\underline{r}}(r+1)!} \D_\ell(\sigma, r + 1, Q, v), \]
where $r+1 := (r_1 + 1, \dotsc, r_n + 1),$
and the coefficients $P_r^k(\sigma)$ are defined in (\ref{e:prdef}).
\end{theorem}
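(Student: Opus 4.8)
The plan is to deduce Theorem~\ref{t:lexplicit} from the analogous \emph{unsmoothed} statement, proved first. Precisely, for any $A=(A_1,\dots,A_n)\in\Gamma^n$ whose matrix $\tilde\sigma$ of leftmost columns is invertible and any positive integer $\lambda$ with $\sigma:=\lambda\tilde\sigma\in M_n(\Z)$, I would establish
\begin{equation}\label{e:psishexpl}
\Delta^{(k)}\Psi_{\mathrm{Sh}}(A,M,Q,v)=(-1)^n\sgn(\det\sigma)\sum_{r}\frac{P_r^k(\sigma)}{(r+1)!}\,\D(\sigma,r+1,Q,v),
\end{equation}
the $P_r^k(\sigma)$ being the coefficients of $f_M(z\sigma^t)^k$ as in~(\ref{e:prdef}). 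Given~(\ref{e:psishexpl}), Theorem~\ref{t:lexplicit} is pure bookkeeping: apply it to each of the two terms of~(\ref{e:psildef}). For the first term the relevant leftmost-column matrix is rescaled by the same $\lambda$ to $\sigma_\ell=\pi_\ell\sigma/\ell$; then $\sgn(\det\sigma_\ell)=\sgn(\det\sigma)$ because $\det\sigma_\ell=\ell^{1-n}\det\sigma$, and the identity $f_{\pi_\ell^{-1}M}(z\sigma_\ell^t)=\ell^{-n}f_M(z\sigma^t)$ — together with homogeneity of $f_M^k$ of degree $nk=\underline r$ — gives, for the twisted matrix $\pi_\ell^{-1}M$, the relation $P_r^k(\sigma_\ell)=\ell^{-\underline r}P_r^k(\sigma)$. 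Combining the two contributions using the definition~(\ref{e:dldef}) of $\D_\ell$, and noting $\underline e=\underline r+n$ when $e=r+1$ so that $\ell^{1-n+\underline e}=\ell^{1+\underline r}$, assembles exactly the asserted formula: the factor $1/\ell^{\underline r}$ is common to both summands, and the extra $\ell$ in front of $-\ell\,\Psi_{\mathrm{Sh}}$ matches the internal power of $\ell$ in $\D_\ell$.

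To prove~(\ref{e:psishexpl}) I would unwind the definition~(\ref{e:psisdef}) of $\Psi_{\mathrm{Sh}}$ and decompose the characteristic function of $C_Q$ into its faces by means of~(\ref{e:cqdecomp0}) and~(\ref{e:weight}):
\[
\Delta^{(k)}\Psi_{\mathrm{Sh}}(A,M,Q,v)=\sgn(\det\sigma)\sum_{I\subset\{1,\dots,n\}}\weight(C_I)\,\Delta^{(k)}h(C_I,v)(zM^t),
\]
with $h(C_I,v)(z)=\sum_{a\in\sP_I\cap(v+\Z^n)}e^{z\cdot a}/\prod_{i\in I}(1-e^{z\cdot\sigma_i})$. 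For each face $C_I$ and each representative $a$, expand the summand by Lemma~\ref{l:bern} and substitute $z\mapsto zM^t$. Since $M\in\cM$ every linear form $zM^t\sigma_i$ is dense, which both reproves (as in Lemma~\ref{L:hd}) that $h(C_I,v)(zM^t)\in\R((z))^{\hd}$ and writes it as an explicit sum over exponent tuples of products $b_{m_i}((\sigma^{-1}a)_i)/m_i!$ against monomials in the $zM^t\sigma_i$ and exponentials in the complementary directions $i\notin I$ (for which $(\sigma^{-1}a)_i=0$, forcing that exponent to vanish).

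Applying $\Delta^{(k)}$ now reduces, via Lemma~\ref{l:ppas} and the description of $\Delta^{(k)}$ as a generalized coefficient-extraction from Remark~\ref{r:shreg} and Lemma~\ref{l:Mtwist}, to reading off a single coefficient; expanding $f_M(z\sigma^t)^k=\sum_r P_r^k(\sigma)z^r/r!$ brings in the $P_r^k(\sigma)$, while the (Faulhaber) summation along each cone direction $i\in I$ produces a factor $-b_{r_i+1}((\sigma^{-1}a)_i)/(r_i+1)$, the normalizations $1/r!$ and $1/(r_i+1)$ collapsing to $1/(r+1)!$. It remains to reorganize the weighted sum over pairs $(I,a)$; this is exactly the passage from~(\ref{e:cqdecomp0}) to~(\ref{e:cqdecomp}) of Section~\ref{s:bookkeeping}, which turns it into a sum over $x\in\Z^n/\sigma\Z^n$ in which the contribution of $x$ (for fixed $r$) is $\prod_i B_{r_i+1}((\sigma^{-1}(x+v))_i)$, save that on the coordinates of the set $J$ of~(\ref{defJ}), where an integral argument would leave $B_1$ ambiguous, the weights $\weight(C_I)$ select precisely the $Q$-resolved value $-\sgn((\sigma^{-1}Q)_i)/2$; i.e.\ the contribution is $\B_{r+1}(\sigma^{-1}(x+v),\sigma^{-1}Q)$ and the sum over $x$ is $\D(\sigma,r+1,Q,v)$. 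The overall sign is $(-1)^n$ rather than the $(-1)^{|I|}$ of Lemma~\ref{l:bern}, the remaining minus signs being the ones visible in the $-\sgn(Q_j)/2$ factors defining $\B_e$; this is the same phenomenon as in identity~(\ref{eqhfinal}) in the proof of Proposition~\ref{p:shinalpha}, and I would reuse that circle of computations (here without the $Q\leftrightarrow-Q$ symmetrization).

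The step I expect to be the main obstacle is the rigorous $\Delta^{(k)}$ extraction on the non-regular series $h(C_I,v)(zM^t)$: the $m_i=0$ terms of Lemma~\ref{l:bern} carry the pole $(zM^t\sigma_i)^{-1}$, and one must follow carefully how the substitution $z\mapsto Z_j$ defining $\Delta^{(k)}_j$ and the ensuing coefficient extraction act on these poles, and check that summing over the faces $I$ with the perturbation weights $\weight(C_I)$ is precisely what converts the resulting $B_1$-at-integers ambiguity into the function $\B_e$ — in effect, that the algebraic regularization built into $\Delta^{(k)}$ is compatible with the geometric perturbation built into $c_Q$. Once this is in place, the bookkeeping of signs and factorials and the fact that each $\Delta^{(k)}_j$ returns the same value (so that the average over $j$ in~(\ref{averagedelta}) is harmless) are routine.
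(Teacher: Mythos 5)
There is a genuine gap, and it sits exactly where you yourself flag ``the main obstacle'': your proposed intermediate \emph{unsmoothed} identity
$\Delta^{(k)}\Psi_{\mathrm{Sh}}(A,M,Q,v)=(-1)^n\sgn(\det\sigma)\sum_r\frac{P_r^k(\sigma)}{(r+1)!}\,\D(\sigma,r+1,Q,v)$
is false for $n\ge 2$. The unsmoothed series $\Psi_{\mathrm{Sh}}(A,M,Q,v)$ is not regular: the $m_i=0$ terms of Lemma~\ref{l:bern} produce genuine polar parts, and the Shintani operator does \emph{not} annihilate them. Already for $n=2$, $k=0$, the $(m_1,m_2)=(0,2)$ term contributes a multiple of $L_2/L_1$ with $L_i=zM^t\sigma_i$, say $L_1=a_1z_1+a_2z_2$ and $L_2=c_1z_1+c_2z_2$, and one computes
$\Delta^{(0)}\bigl(L_2/L_1\bigr)=\tfrac12\bigl(c_1/a_1+c_2/a_2\bigr)\neq 0$
generically. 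These are precisely the extra, non-Bernoulli-product terms in Shintani's evaluation of cone zeta values at nonpositive integers (Theorem~\ref{t:shintani}); your right-hand side captures only the ``top'' terms with all $m_i\ge 1$. The perturbation weights $\weight(C_I)$ cannot rescue this: they resolve only the $B_1$-at-integers ambiguity (the $J$-coordinates entering $\B_e$), which is an entirely separate issue from the poles. Consequently the ``pure bookkeeping'' deduction of Theorem~\ref{t:lexplicit} in your first paragraph starts from a wrong formula for each of the two terms of (\ref{e:psildef}), and the errors are never shown to cancel.

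The correct mechanism --- and the heart of the paper's proof --- is to work with the smoothed difference from the outset: Lemma~\ref{l:nomzero} (Cassou--Nogu\`es' trick) shows that every $m_j=0$ term arising in $\Psi(\pi_\ell A\pi_\ell^{-1},\pi_\ell^{-1}M,\pi_\ell Q,\pi_\ell v)$ cancels against the corresponding terms of $\ell\,\Psi(A,M,Q,v)$, because each lattice point $a$ pairs with the $\ell$ points $a+k\sigma_{i_j}/\ell$ and the summand is independent of $k$ when $m_j=0$. Only after this cancellation is $\Psi_{\mathrm{Sh},\ell}$ a regular power series, so that $\Delta^{(k)}$ reduces to naive coefficient extraction and Lemma~\ref{l:Mtwist} applies. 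Your treatment of the regular terms (Lemma~\ref{l:bern} expansion, passage from (\ref{e:cqdecomp0}) to (\ref{e:cqdecomp}), the weights producing $\B_{r+1}(\sigma^{-1}(x+v),\sigma^{-1}Q)$, and the $\ell$-power and sign bookkeeping assembling $\D_\ell$) is sound and matches the paper's Lemma~\ref{l:psisleval}; the fix is to run that face-by-face computation on the difference (\ref{e:psildef}) rather than on $\Psi_{\mathrm{Sh}}$ alone, proving the polar cancellation first instead of hoping it is absorbed by the weights.
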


The proof of Theorem~\ref{t:lexplicit} is involved and technical; the reader is invited to move on to the statement of Theorem~\ref{t:smooth}
and the rest of the paper, returning to our discussion here as necessary.

\medskip

 The proof of Theorem~\ref{t:lexplicit} will be broken into three parts: 
\begin{itemize}
\item Showing that the terms
from (\ref{e:bern}) arising from indices $m_j =0$ cancel under the smoothing operation; in particular, $\Psi_{\mathrm{Sh},\ell}$ takes values
in $\R[[z_1, \dotsc, z_n]].$
\item Calculating the remaining terms and thereby giving a formula for $\Psi_{\mathrm{Sh},\ell}$ in terms of the Dedekind sums $\D_\ell$.
\item  Applying Lemma~\ref{l:Mtwist}, which relates the values of $\Delta^{(k)}$ on a power series in $\R[[z_1, \dotsc, z_n]]$ to the coefficients appearing in (\ref{e:prdef}).
\end{itemize}

\begin{lemma} \label{l:nomzero} In the evaluation of $\Psi_{\mathrm{Sh},\ell}(A, M, Q, v)$ using $(\ref{e:bern})$, the terms arising from 
tuples $m$ with any component $m_j = 0$ in
$ \Psi(\pi_\ell A \pi_\ell^{-1}, \pi_\ell^{-1}M,  \pi_\ell Q , \pi_\ell v)$ and $\ell \Psi(A, M, Q, v)$ cancel.  In particular, $\Psi_{\mathrm{Sh},\ell}(A, M, Q, v) \in \R[[z_1, \dotsc, z_n]].$
\end{lemma}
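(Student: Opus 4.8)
The plan is to make the smoothing operation completely explicit as a manipulation of lattice-point generating series, and then to isolate the singular terms by hand. First I would rewrite the two summands of $\Psi_{\mathrm{Sh},\ell}=\Psi_{\mathrm{Sh}}(\pi_\ell A\pi_\ell^{-1},\pi_\ell^{-1}M,\pi_\ell Q,\pi_\ell v)-\ell\,\Psi_{\mathrm{Sh}}(A,M,Q,v)$. Let $\sigma\in M_n(\Z)$ be the scaled matrix whose columns $\sigma_1,\dots,\sigma_n$ are the first columns of the $A_i$. Since $A_i\in\Gamma_\ell$, the lower $n-1$ rows of $\sigma$ are divisible by $\ell$, so $\sigma_\ell:=\pi_\ell\sigma/\ell$ is integral, is exactly the matrix of first columns of the tuple $\pi_\ell A\pi_\ell^{-1}$, and has $\det\sigma_\ell=\ell^{1-n}\det\sigma$, hence $\sgn\det\sigma_\ell=\sgn\det\sigma$. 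The Colmez perturbation of Section~\ref{s:qperturb} commutes with the linear map $\pi_\ell$ and is insensitive to positive rescaling of the generators, so $\mathbf{1}_{C_{\pi_\ell Q}(\sigma_\ell)}=\mathbf{1}_{C_Q(\sigma)}\circ\pi_\ell^{-1}$; moreover $z(\pi_\ell^{-1}M)^{t}\sigma_{\ell,i}=\ell^{-1}\,zM^{t}\sigma_i$ for each $i$. Substituting $p=\pi_\ell q$ in the sum defining the first summand, it becomes the generating series of $C_Q(\sigma)$ over the \emph{refined} lattice $v+\tfrac1\ell\Z\times\Z^{n-1}$, evaluated at $zM^{t}$, and splitting that lattice into its $\ell$ cosets modulo $\Z^n$ gives
\[\Psi_{\mathrm{Sh}}(\pi_\ell A\pi_\ell^{-1},\pi_\ell^{-1}M,\pi_\ell Q,\pi_\ell v)=\sgn(\det\sigma)\sum_{j=0}^{\ell-1}h\bigl(\mathbf{1}_{C_Q(\sigma)},\,v+\tfrac j\ell e_1\bigr)(zM^{t}).\]
Since $\ell\,\Psi_{\mathrm{Sh}}(A,M,Q,v)=\ell\,\sgn(\det\sigma)\,h(\mathbf{1}_{C_Q(\sigma)},v)(zM^{t})$, subtracting yields
\[\Psi_{\mathrm{Sh},\ell}(A,M,Q,v)=\sgn(\det\sigma)\sum_{j=0}^{\ell-1}\Bigl(h(\mathbf{1}_{C_Q(\sigma)},v+\tfrac j\ell e_1)-h(\mathbf{1}_{C_Q(\sigma)},v)\Bigr)(zM^{t}).\]

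Next I would expand each $h(\mathbf{1}_{C_Q(\sigma)},v')(zM^{t})$ using the decomposition $(\ref{e:cqdecomp})$ of $C_Q(\sigma)\cap(v'+\Z^n)$ together with the Bernoulli expansion $(\ref{e:bern})$. The terms in which every exponent $m_i\ge 1$ visibly sum to an element of $\R[[z]]$, since each factor $(zM^{t}\sigma_i)^{m_i-1}$ is then a genuine monomial; the remaining terms — those with some $m_j=0$ — make up the polar part $P(v')$. As the power-series parts contribute only a power series to the displayed formula, the lemma is equivalent to the identity $\sum_{j=0}^{\ell-1}P(v+\tfrac j\ell e_1)=\ell\,P(v)$, which is exactly the asserted cancellation of the $m_j=0$ terms; so it suffices to prove that $P(\cdot)$ is $\tfrac1\ell e_1$-periodic.

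The hard part will be this periodicity, and it is where the hypothesis $A\in\Gamma_\ell$ is really used. In $(\ref{e:bern})$ the only $v'$-dependence of a coefficient $b_{m_i}\bigl((\sigma^{-1}(v'+x))_i\bigr)$ is through the periodic Bernoulli value $B_{m_i}\bigl((\sigma^{-1}(v'+x))_i\bigr)$; summing over $x\in\Z^n/\sigma\Z^n$ and invoking the multiplication formula $\sum_{i=0}^{N-1}b_m\bigl(\tfrac{x+i}{N}\bigr)=N^{1-m}b_m(x)$ should reorganize $P(v')$ so that its $v'$-dependence is carried entirely by fractional parts $\{L(v')\}$ of integral linear forms $L$ in $v'_1,\dots,v'_n$ whose coefficient of $v'_1$ is a multiple of $\ell$ — for such an $L$ one has $\{L(v+\tfrac1\ell e_1)\}=\{L(v)\}$, whence the periodicity. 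The divisibility $\ell\mid\sigma_{jk}$ for $j\ge 2$ (i.e.\ $A\in\Gamma_\ell$) is exactly what forces the relevant cofactors of $\sigma$, and hence the forms $L$, to carry that factor of $\ell$. Carrying this out — keeping precise track of which forms $L$ occur and with what multiplicities from the $x$-sum, the bookkeeping being of the same flavor as $(\ref{e:cqinterp})$–$(\ref{e:cqdecomp})$ — is the technical crux; the case $n=1$, where $P$ is a constant times $(zM^{t}\sigma_1)^{-1}$, is immediate and can serve as the base of an induction on $n$ via residues along the hyperplanes $zM^{t}\sigma_i=0$. Once this lemma is in hand, the surviving $m_i\ge 1$ terms are recombined through the periodic Bernoulli functions and Lemma~\ref{l:Mtwist} to produce the formula of Theorem~\ref{t:lexplicit}.
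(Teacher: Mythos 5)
Your opening reduction is correct and is in fact the same first step as the paper's own argument: after the substitution $p=\pi_\ell q$ the first summand of (\ref{e:psildef}) becomes the generating series of the very same perturbed cone $C_Q(\sigma)$, with the same weights (since $\sign((\pi_\ell Q)\sigma_\ell^{-t})=\sign(Q\sigma^{-t})$), but summed over the refined lattice $v+\tfrac{1}{\ell}\Z\oplus\Z^{n-1}$ and evaluated at $zM^t$; splitting into the $\ell$ cosets $v+\tfrac{j}{\ell}e_1+\Z^n$ then reduces the lemma to a cancellation statement for the $m_j=0$ (``polar'') contributions.

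At exactly this point, however --- and this \emph{is} the content of the lemma --- your argument stops being a proof. You reduce to the claim that the polar part is $\tfrac{1}{\ell}e_1$-periodic and then defer its verification as ``the technical crux'', to be handled by the Bernoulli multiplication formula and a bookkeeping of integral linear forms whose $v_1'$-coefficient is divisible by $\ell$, plus an unspecified induction via residues. None of this is carried out, and as sketched it is doubtful: the sum over $x\in\Z^n/\sigma\Z^n$ couples the several factors $B_{m_i}(\sigma^{-1}(v'+x)_i)$, so the multiplication formula cannot be applied factor by factor, and the relevant coefficients $(\sigma^{-1})_{i1}=\mathrm{cof}_{1i}(\sigma)/\det\sigma$ are ratios in which numerator and denominator are \emph{both} divisible by $\ell^{n-1}$, so the asserted $\ell$-divisibility of your forms $L$ does not follow merely from $\ell\mid\sigma_{jk}$ for $j\ge 2$. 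The cancellation actually has a short direct proof, and it is where the hypothesis $A\in\Gamma_\ell$ genuinely enters: for a fixed tuple $m$ with $m_j=0$, translation of $a$ by $k\sigma_{i_j}/\ell$ preserves the refined lattice (the last $n-1$ entries of $\sigma_{i_j}$ are divisible by $\ell$), changes only the $i_j$-th coordinate of $\sigma^{-1}(a)$, and hence leaves the summand of (\ref{e:bern}) unchanged because $B_0\equiv 1$; as $k$ runs over residues mod $\ell$ these translates exhaust the points of $\sP\cap\bigl(v+\tfrac{1}{\ell}\Z\oplus\Z^{n-1}\bigr)$ lying over a given $a\in\sP\cap(v+\Z^n)$ (here one also uses that the top entry of $\sigma_{i_j}$ is an $\ell$-unit, which follows from $\det A_{i_j}\in\Z_{(\ell)}^*$ together with the congruence condition), so each polar term of the unsmoothed sum is matched by exactly $\ell$ equal terms of the refined sum and cancels against the factor $\ell$ in (\ref{e:psildef}). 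The same translation argument, applied with $k_0$ satisfying $k_0(\sigma_{i_j})_1\equiv 1\pmod{\ell}$, proves the periodicity you want; but some such lattice-point re-indexing must be supplied --- as written, your proposal leaves the essential step unproved.
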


\begin{proof}  This is the manifestation of Cassou--Nogu\`es' trick in our context.   Up to the factor $\sgn \det \sigma$,
the value of $\Psi_{\mathrm{Sh}}(A, M, Q, v)$ is  the right side of (\ref{e:bern}) summed over
various cones $C = C(\sigma_{i_1}, \dotsc, \sigma_{i_r})$ and all $a \in \sP \cap (v + \Z^n)$, with the subsets $\{i_j\}$ chosen according the $Q$-perturbation rule, $\sP$  the parallelpiped associated to $C$, and $z$ replaced by $z M^t$:
\begin{equation} \label{e:casum1}
 \sum_{C} \sum_{a \in \sP \cap v + \Z^n} (-1)^r \sum_{\stack{m_j=0}{r\text{-tuples }} }^\infty \prod_{j=1}^{r} 
\frac{B_{m_j}(\sigma^{-1}(a)_{i_j})}{m_j!} (z M^t \sigma_{i_j})^{m_j-1} \prod_{i \not\in \{i_j\}} e^{(z M^t \sigma_i)(\sigma^{-1}(a)_i)}. 
\end{equation}
Let us now fix a cone $C$ and consider the corresponding contribution of $\pi_\ell C$ 
to the value $\Psi_{\mathrm{Sh}}(\pi_\ell A, \pi_\ell^{-1}M,  \pi_\ell Q , \pi_\ell v). $
(Note that $C$ will be included using perturbation via $Q$ if and only if $\pi_\ell C$ will be included using perturbation via $\pi_\ell Q$.)
In applying (\ref{e:bern}), we use the generators $\pi_\ell \sigma_{i_j}$ for the cone $\pi_\ell C$.  
 By applying the change of variables
$a \mapsto \pi_\ell^{-1} a$, we obtain
the exact same expression as (\ref{e:casum1}) except with $\Z^n$ in the second index replaced by $\frac{1}{\ell} \Z \oplus \Z^{n-1}$:
 \begin{equation}
  \sum_{C} \sum_{a \in \sP \cap v + (\frac{1}{\ell}\Z \oplus \Z^{n-1})} (\text{same}).
\label{e:casum2}
 \end{equation}

Fix a tuple $m = (m_1, \dotsc, m_r)$ appearing in the sum (\ref{e:casum1}) such that at least one $m_j$ is equal to zero.
Fix such an index $j$ and a point $a \in \sP \cap v + \Z^n$.   For each equivalence class $b$ mod $\ell$, there is a unique point of the form $a + k\sigma_{i_j}/\ell$ in $\sP \cap v + (\frac{1}{\ell} \Z \oplus \Z^{n-1})$ for an integer $k \equiv b \pmod{\ell}$.
Now, the summand associated to each of these points in (\ref{e:casum2}) is equal to the summand of the associated point $a$ in (\ref{e:casum1}), and in particular is independent of $k$.  To see this, note that \[ \sigma^{-1}(a + k \sigma_{i_j}/\ell) = \sigma^{-1}(a) + 
(0, \dotsc, 0, k/\ell, 0, \dotsc, 0), \]
with $k/\ell$ in the $i_j$th component.  Hence the only term possibly depending on $k$ is $B_{m_j}(\sigma^{-1}(a)_{i_j})$, but $B_0(x) = 1$ is a constant.
The $\ell$ terms $a + k\sigma_{i_j}/\ell$ in (\ref{e:casum2}) therefore cancel with the term $a$ in (\ref{e:casum1}), in view of the factor  $\ell$ in the definition (\ref{e:psildef}).
\end{proof}

\begin{lemma} \label{l:psisleval}
We have
\[
\Psi_{\mathrm{Sh},\ell}(A, M, Q, v) = (-1)^n \sgn\det(\sigma) \sum_{r} \ell^{- \underline{r}} \cdot \D_\ell(\sigma, r+1, Q, v) \frac{ (z M^t \sigma)^{r}}{(r+1)!},
 \]
 where $r$ ranges over all $n$-tuples $r = (r_1, \dotsc, r_n)$ of nonnegative integers.
\end{lemma}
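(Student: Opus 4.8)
The plan is to compute $\Psi_{\mathrm{Sh},\ell}$ directly, by expanding the cone generating functions into Bernoulli polynomials via Lemma~\ref{l:bern} and then reorganizing the result into the Dedekind sums $\D_\ell$; this amounts to running the bookkeeping of Section~\ref{s:bookkeeping} in reverse and then invoking the cancellation already recorded in Lemma~\ref{l:nomzero}. Write $\sigma=(\sigma_1,\dotsc,\sigma_n)$ and $L_j(z)=zM^t\sigma_j$. Starting from $h(\Phi_{\mathrm{Sh}}(A)(Q),v)=\sgn(\det\sigma)\sum_{I}\weight(C_I)h(C_I,v)$ (the decomposition used just before (\ref{idinhpsialpha})) and applying Lemma~\ref{l:bern} to each $C_I=C(\sigma_i:i\in I)$ --- noting that $(\sigma^{-1}a)_i=0$ for $i\notin I$ whenever $a\in\sP_I$, so that the ``external'' exponential product in (\ref{e:bern}) is trivial --- one obtains
\[
\Psi_{\mathrm{Sh}}(A,M,Q,v)=\sgn(\det\sigma)\sum_{I}\weight(C_I)(-1)^{|I|}\!\sum_{a\in\sP_I\cap(v+\Z^n)}\ \sum_{m\in\Z_{\ge0}^I}\ \prod_{j\in I}\frac{b_{m_j}\big((\sigma^{-1}a)_j\big)}{m_j!}\,L_j(z)^{m_j-1}.
\]
By Lemma~\ref{l:nomzero}, in the difference (\ref{e:psildef}) defining $\Psi_{\mathrm{Sh},\ell}$ all contributions with some $m_j=0$ cancel, so it suffices to track, in the two terms of (\ref{e:psildef}), the part of this expansion with every $m_j\ge1$. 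Collecting that part by the monomial $(zM^t\sigma)^r=\prod_j L_j(z)^{r_j}$ with $r=m-1$, a cone $C_I$ contributes to $(zM^t\sigma)^r$ exactly when $\mathrm{supp}(r):=\{j:r_j>0\}\subseteq I$, and then with $m=r+1$ restricted to $I$.

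The crux is the combinatorial identity: for every $n$-tuple $r$ of nonnegative integers,
\[
\sum_{I\supseteq\mathrm{supp}(r)}\weight(C_I)(-1)^{|I|}\!\sum_{a\in\sP_I\cap(v+\Z^n)}\prod_{j\in I}b_{r_j+1}\big((\sigma^{-1}a)_j\big)=(-1)^n\,\D(\sigma,r+1,Q,v)
\]
(here $\prod_{j\in I}1/(r_j+1)!=1/(r+1)!$ because $r_j=0$ for $j\notin I$, which is why the factorials in the Lemma come out right). To prove it I would use the bijection of Section~\ref{s:bookkeeping} between $\bigsqcup_I(\sP_I\cap(v+\Z^n))$ and the pairs $(x,I)$ with $x\in\Z^n/\sigma\Z^n$ and $I\supseteq\overline{J(x)}$, under which $a$ corresponds to $x=a-v$ and $a=a_I$ is the distinguished representative of (\ref{e:aIdef}); after interchanging the order of summation, the admissible $I$ for a fixed $x$ are $I=(\mathrm{supp}(r)\cup\overline{J(x)})\sqcup S$ with $S\subseteq K:=J(x)\setminus\mathrm{supp}(r)$. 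For $j\in\mathrm{supp}(r)\cup\overline{J(x)}$ one checks $b_{r_j+1}\big((\sigma^{-1}a_I)_j\big)=B_{r_j+1}\big((\sigma^{-1}(x+v))_j\big)$ (using $b_k(1)=b_k(0)$ for $k\ge2$ at the integral coordinates), while for $j\in S$ one gets $b_{r_j+1}\big((\sigma^{-1}a_I)_j\big)=b_1(1)=\tfrac12$; moreover $\overline{I}=K\setminus S$, so $\weight(C_I)=\prod_{i\in K\setminus S}\tfrac{1+\epsilon_i}{2}$ with $\epsilon_i=\sign((\sigma^{-1}Q)_i)$. The inner sum over $S\subseteq K$ then collapses via the elementary identity
\[
\sum_{S\subseteq K}\left(-\tfrac12\right)^{|S|}\prod_{i\in K\setminus S}\frac{1+\epsilon_i}{2}=\prod_{i\in K}\left(\frac{1+\epsilon_i}{2}-\frac12\right)=\prod_{i\in K}\frac{\epsilon_i}{2},
\]
and, together with the parity relation $(-1)^{|\mathrm{supp}(r)\cup\overline{J(x)}|}=(-1)^{n+|K|}$ (the two sets partition $\{1,\dotsc,n\}$), the $x$-summand reduces exactly to $(-1)^n\B_{r+1}(\sigma^{-1}(x+v),\sigma^{-1}Q)$ --- whose index set ``$J$'' in the definition of $\B_{r+1}$ is precisely $K$. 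Summing over $x$ and using (\ref{e:dedsum}) gives the identity, hence the part of $\Psi_{\mathrm{Sh}}(A,M,Q,v)$ with all $m_j\ge1$ equals $(-1)^n\sgn(\det\sigma)\sum_r\D(\sigma,r+1,Q,v)(zM^t\sigma)^r/(r+1)!$.

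It remains to assemble the two terms of (\ref{e:psildef}). For $\Psi_{\mathrm{Sh}}(\pi_\ell A\pi_\ell^{-1},\pi_\ell^{-1}M,\pi_\ell Q,\pi_\ell v)$ I would apply the same expansion: its matrix of leftmost columns is $\sigma_\ell=\pi_\ell\sigma/\ell$ (up to a scalar prime to $\ell$, harmless once one uses $\sigma_\ell$ itself as the chosen integral generators), one has $\sgn(\det\sigma_\ell)=\sgn(\det\sigma)$ since $\det\sigma_\ell=\ell^{1-n}\det\sigma$, the $Q$-perturbation weights are unchanged because $\pi_\ell$ is an orientation-preserving linear isomorphism carrying $C_I$, its interior and its faces to those of $\pi_\ell C_I$, and the linear forms transform as $z(\pi_\ell^{-1}M)^t(\sigma_\ell)_j=zM^t\sigma_j/\ell=L_j(z)/\ell$. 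Thus the part of this term with all $m_j\ge1$ equals $(-1)^n\sgn(\det\sigma)\sum_r\ell^{-\underline r}\D(\sigma_\ell,r+1,\pi_\ell Q,\pi_\ell v)(zM^t\sigma)^r/(r+1)!$. Subtracting $\ell$ times the formula from the previous paragraph and using $1-n+\underline{r+1}=1+\underline r$ together with the definition (\ref{e:dldef}) of the smoothed Dedekind sum, the combination $\ell^{-\underline r}\D(\sigma_\ell,r+1,\pi_\ell Q,\pi_\ell v)-\ell\,\D(\sigma,r+1,Q,v)$ equals $\ell^{-\underline r}\D_\ell(\sigma,r+1,Q,v)$, which is the asserted formula.

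The cancellation of the $m_j=0$ terms is exactly Lemma~\ref{l:nomzero}, and the transformation of the shifted data under $\pi_\ell$ is routine; the delicate step is the combinatorial identity in the middle paragraph. There one must check that the bijection of Section~\ref{s:bookkeeping} and the distinguished representatives $a_I$ interact correctly with the Bernoulli evaluations at integral coordinates and --- above all --- reconcile the two conventions for the value of $B_1$ at integers: the ``$0$ or $1$'' weight $\tfrac{1+\sign Q}{2}$ inherited from $\weight(C_I)$ versus the ``$\pm\tfrac12$'' factor $-\tfrac{\sgn Q}{2}$ in the definition of $\B_e$. The elementary summation over subsets $S\subseteq K$ displayed above is what effects this reconciliation, and getting its signs and the attendant parity bookkeeping right is the main point of the argument.
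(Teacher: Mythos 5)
Your proof is correct and follows essentially the same route as the paper's: expand $h(C_I,v)$ via Lemma~\ref{l:bern} over the $Q$-perturbation decomposition of Section~\ref{s:bookkeeping}, discard the $m_j=0$ terms by Lemma~\ref{l:nomzero}, and collapse the sum over admissible faces (your sum over $S\subseteq K$ is identical to the paper's sum over $\overline{I}\subseteq J_r$) to produce $\B_{r+1}$ and hence $\D$, before assembling the $\pi_\ell$-shifted term into $\D_\ell$. The sign, factorial, and $\ell$-power bookkeeping all check out.
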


\begin{proof}
We will require the decomposition~(\ref{e:cqdecomp}) for $C_Q \cap (v + \Z^n)$, whose notation we now recall.
 For each $x \in \Z^n/\sigma \Z^n$, let $J = J(x)$ denote the set
of indices $j$ such that $\sigma^{-1}(v + x)_j \in \Z$. For each $I \supset \overline{J}$, consider the cone
$C_I = C(\sigma_i: i \in I)$ with associated parallelpiped $\sP_I$.  The point $x$ and subset $I$ yield a point $a_I \in \sP_I$ such that
$a_I - v \equiv x \pmod{\sigma\Z^n}$, defined by
(\ref{e:aIdef}).

 We evaluate  $\Psi_{\mathrm{Sh}}(A, M, Q, v)$ by employing the decomposition~(\ref{e:cqdecomp}) and applying (\ref{e:bern}).
By Lemma~\ref{l:nomzero}, we need only consider  terms from (\ref{e:bern}) arising from $m_j \ge 1$.   We 
write $r = (r_1, \dotsc, r_n) = (m_1 - 1, \dotsc, m_n - 1)$.
Suppressing for the moment the factor of $\sgn \det(\sigma)$ in the definition (\ref{e:phisdef}) of $\Phi_{\mathrm{Sh}}$, 
we obtain that for a vector of nonnegative integers
$r$ and a class $x \in \Z^n/\sigma \Z^n$, the contribution of the cone $C_{I}$ to the coefficient
of $\prod_{i=1}^{n} (zM^t \sigma_i)^{r_i}$ in $\Psi_{\mathrm{Sh}}(A, M, Q, v)$ for $I \supset \overline{J}$
is 0 unless $r_i = 0$ for $i \not\in I$, and in that case equals
\begin{equation}
\label{e:uini}
  \text{weight}(C_{I}) (-1)^{\#I} \prod_{i \not\in J} \frac{B_{r_i + 1}(\sigma^{-1}(v+x)_i)}{(r_i + 1)!}
\prod_{i \in J \cap I} \frac{b_{r_i+1}(1)}{(r_i+1)!}. 
\end{equation}
Therefore, let $J_r = J \cap \{i: r_i = 0\}$.  The expression (\ref{e:uini}) summed over all $I \supset \overline{J_r}$ can be written
\begin{equation} \label{e:inner}
(-1)^n \sum_{x \in \Z^n/\sigma\Z^n}  \prod_{i \not\in J_r} \frac{B_{r_i + 1}(\sigma^{-1}(v+x)_i)}{(r_i + 1)!} 2^{-\# J_r} \sum_{I \supset \overline{J_r}} \text{weight}(C_{I}) (-2)^{n - \#I}. 
 \end{equation}
The inner sum in (\ref{e:inner}) is easily computed using (\ref{e:weight}):
\[ \sum_{\overline{I} \subset J_r} (-2)^{\#\overline{I}} \prod_{i \in \overline{I}} \frac{1 + \sign(Q\sigma^{-t})_i}{2} = (-1)^{\#J_r}\prod_{j \in J_r}  \sign(Q\sigma^{-t})_j. \]

Therefore, we end up with the following formula for the coefficient of $\prod_{i=1}^{n} (zM^t \sigma_i)^{r_i}$ arising from terms with each $m_i = r_i + 1 \ge 1$:
\begin{equation} \label{e:inner2}
 \sum_{x \in \Z^n/\sigma\Z^n}  \prod_{i \not\in J_r} \frac{B_{r_i + 1}(\sigma^{-1}(v+x)_i)}{(r_i + 1)!} \prod_{j \in J_r} \frac{ -\sgn(Q\sigma^{-t})_j}{2}
 =  \sum_{x \in \Z^n/\sigma\Z^n} \frac{ \B_{r+1}(\sigma^{-1}( x+ v ),\sigma^{-1}Q)}{(r+1)!}.
 \end{equation}
 Evaluating the same expression with $(A, M, Q, v)$ replaced by $(\pi_\ell A \pi_\ell^{-1}, \pi_\ell^{-1} M, \pi_\ell Q, \pi_\ell v)$ and using the definition of $\Psi_{\mathrm{Sh},\ell}$ gives the desired result.
\end{proof}

Theorem~\ref{t:lexplicit} now follows from Lemma~\ref{l:psisleval} and Lemma~\ref{l:Mtwist}  applied with
\[ F =  (-1)^n \sgn\det(\sigma) \sum_{r} \ell^{- \underline{r}} \cdot \D_\ell(\sigma, r+1, Q, v) \frac{ z^{r}}{(r+1)!} \] 
and $M$ replaced by $\sigma^t M$.

\medskip

In \cite{pcsd}, we show that Theorem~\ref{t:lexplicit} implies the following integrality property of $\Psi_{\mathrm{Sh},\ell}$
(see Theorem 4 and \S2.7 of {\em loc.~cit.}).

\begin{theorem} \label{t:smooth} Suppose that $M$ and $v$  satisfy $f_M(v + {\textstyle \frac{1}{\ell}}\Z \oplus \Z^{n-1}) \subset 
\Z[{\textstyle \frac{1}{\ell}}].$ Then for every nonnegative integer $k$, we have $\Delta^{(k)}\Psi_{\mathrm{Sh},\ell}(A, M, Q, v) \in \Z[\frac{1}{\ell}]$.
\end{theorem}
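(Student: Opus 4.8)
The plan is to deduce the theorem from the explicit formula of Theorem~\ref{t:lexplicit}; this deduction is precisely the content of \cite[\S2.7]{pcsd}, so I will only describe its shape. Starting from
\[ \Delta^{(k)}\Psi_{\mathrm{Sh},\ell}(A,M,Q,v) = (-1)^n\sgn(\det\sigma)\sum_{r}\frac{P_r^k(\sigma)}{\ell^{\underline{r}}\,(r+1)!}\,\D_\ell(\sigma,r+1,Q,v), \]
a finite sum since $P_r^k(\sigma)=0$ once $\underline{r}>nk$, I would first observe that, because $\ell$ is a unit in $\Z[1/\ell]$, it suffices to prove $p$-integrality for each prime $p\neq\ell$; so fix such a $p$. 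The key realization is that one cannot argue term by term: the factorials $1/(r+1)!$ — and, when $f_M$ is not $\Z$-integral, the coefficients $P_r^k(\sigma)$ themselves — carry honest $p$-power denominators, so $p$-integrality must arise from cancellation \emph{across} the sum over $r$. Producing that cancellation is exactly what the smoothing is for, and it is a cohomological avatar of the trick of Cassou-Nogu\`es \cite{cn} (compare Deligne--Ribet \cite{dr}).

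The mechanism I would use is the following. Reading the computation of Lemma~\ref{l:psisleval} backwards through (\ref{e:casum1}), Lemma~\ref{l:nomzero} and the Bernoulli generating function (\ref{e:bdef}), the unsmoothed expression $\sum_r \tfrac{P_r^k(\sigma)}{(r+1)!}\,\D(\sigma,r+1,Q,v)\,z^r$, once $\Delta^{(k)}$ is applied and $z$ is replaced by $zM^t\sigma$, is the rigorous realization of the regularized power sum $\sum_{x}f_M(x)^k$ over the lattice points of $C_Q\cap(v+\Z^n)$ — this is what the ``nonsense identity chain'' of Section~\ref{s:svszf} is trying to say. The \emph{smoothed} combination $\Delta^{(k)}\Psi_{\mathrm{Sh},\ell} = \Delta^{(k)}\Psi_{\mathrm{Sh}}(\pi_\ell A\pi_\ell^{-1},\pi_\ell^{-1}M,\pi_\ell Q,\pi_\ell v)-\ell\,\Delta^{(k)}\Psi_{\mathrm{Sh}}(A,M,Q,v)$ then collapses this a priori infinite regularized power sum to an \emph{honest finite} weighted sum of values $f_M(\cdot)^k$ over coset representatives, together with a ``universal'' correction term not involving $f_M$. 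At this point I would use the change of variables $f_{\pi_\ell^{-1}M}(x)=f_M(x\pi_\ell^{-1})$, which gives $f_{\pi_\ell^{-1}M}(\pi_\ell v+\Z^n)=f_M(v+\tfrac1\ell\Z\oplus\Z^{n-1})$, so that the single hypothesis $f_M(v+\tfrac1\ell\Z\oplus\Z^{n-1})\subset\Z[1/\ell]$ governs \emph{both} terms of the smoothed cocycle simultaneously (note $\Z^n\subset\tfrac1\ell\Z\oplus\Z^{n-1}$, so the unshifted term is covered too).

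To finish, I would check that each piece of the reorganized expression is $p$-integral. The finite sums of values $f_M(\cdot)^k$ lie in $\Z[1/\ell]$ by the hypothesis just invoked. The universal correction term is assembled from Bernoulli polynomials using only two operations that preserve $p$-integrality: the distribution relation $B_e(\ell y)=\ell^{\,e-1}\sum_{j=0}^{\ell-1}B_e(y+\tfrac j\ell)$ — available because the bottom $n-1$ rows of $\sigma$ are divisible by $\ell$, so $\sigma_\ell=\pi_\ell\sigma/\ell\in M_n(\Z)$ — which introduces only powers of $\ell$ into denominators, and the elementary identity $\tfrac1e\bigl(b_e(x+N)-b_e(x)\bigr)=\sum_{a=0}^{N-1}(x+a)^{e-1}\in\Z[x]$, which clears the factors $1/(r_j+1)$ one coordinate at a time; what survives is a Bernoulli number whose prime-to-$\ell$ denominator is controlled by von Staudt--Clausen and is a $p$-adic unit. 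Assembling these observations gives $p$-integrality for every $p\neq\ell$, hence $\Delta^{(k)}\Psi_{\mathrm{Sh},\ell}(A,M,Q,v)\in\Z[1/\ell]$.

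The hard part, and the reason I would defer the details to \cite[\S2.7]{pcsd}, is the bookkeeping behind the collapse step: proving rigorously that the smoothed regularized power sum equals an honest finite sum of $f_M$-values plus an $f_M$-free correction, with every denominator away from $\ell$ cancelling between the two terms of $\Psi_{\mathrm{Sh},\ell}$. Carrying this out requires a simultaneous $\ell$-adic and $p$-adic analysis of the smoothed Dedekind sums $\D_\ell(\sigma,r+1,Q,v)$ and of how they couple to the coefficients $P_r^k(\sigma)$, which is exactly the computation performed in \emph{loc.~cit.}
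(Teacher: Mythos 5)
Your proposal takes essentially the same route as the paper: the paper itself gives no self-contained argument for Theorem~\ref{t:smooth}, but derives it from the explicit Dedekind-sum formula of Theorem~\ref{t:lexplicit} and defers the deduction to \cite[\S2.7]{pcsd}, exactly as you do. Your additional sketch of the mechanism (reduction to $p$-integrality for $p\neq\ell$, the change of variables showing the hypothesis on $f_M(v+\tfrac1\ell\Z\oplus\Z^{n-1})$ controls both terms of the smoothing, and the Bernoulli/power-sum cancellation in the spirit of Cassou-Nogu\`es) is consistent with that reference and does not conflict with anything in the present paper.
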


Theorem~\ref{t:smooth} can be used to prove Theorems~\ref{t:integral} and~\ref{t:padic} from the introduction; furthermore Spiess' cohomological formalism for $p$-adic $L$-functions can then be used with our construction to deduce Theorem~\ref{t:oov}.  We refer the reader to \cite[\S3--5]{pcsd} for the proofs.

\subsection{A generalized cocycle} \label{s:gencoc}

We conclude the paper by defining a generalization of the power series $\Psi_{\mathrm{Sh},\ell}$.
 We discuss  this generalization here because it was
stated without proof in \cite[Proposition 2.4]{pcsd}.

Let $\cP = \R[z_1, \dotsc, z_n]$, viewed as a $\Gamma$-module via $(\gamma P)(z) = P(z \gamma).$
The fact that the power series $\Psi_{\mathrm{Sh},\ell}$ is regular (by Lemma~\ref{l:psisleval}) implies that its domain of definition can be expanded from 
matrices $M$ and their associated polynomials $f_M$ to arbitrary polynomials $P \in \cP$.
Let $\tilde{\cF}$ 
denote the $\R$-vector space of functions  $f\colon \cP \times \cQ \times \cV \lra \R$  that
are linear in the first variable and satisfy the  distribution relation
\begin{equation}
 f(P, Q, v) = \sgn(\lambda)^n \sum_{\lambda w = v} f(\lambda^{\deg P} P, \lambda^{-1} Q, w) 
\end{equation}
for each nonzero integer $\lambda$ when $P$ is homogeneous. 
The space $\tilde{\cF}$ has a $\Gamma$-action given by (\ref{e:vaction}), with $M$ replaced by $P$.

Following  (\ref{e:prdef}), define for $P \in \cP$ and any matrix $\sigma \in M_n(\R)$ coefficients $P_r(\sigma)$ by 
\begin{equation}
P(z\sigma^t)=\sum_r \frac{P_r(\sigma)}{r!} z_1^{r_1}\cdots z_n^{r_n}.
\end{equation}
Fixing $\sigma = 1$, these coefficients define an operator
$\Delta^{(P)}\colon \R[[z_1, \dotsc, z_n]] \rightarrow \R$ given by
\[ \Delta^{(P)}\left( \sum_r F_r z^r \right) = \sum_r F_r P_r(1). \]
\begin{prop} The function
\begin{align}
 \tilde{\Psi}_{\mathrm{Sh},\ell}(A, P, Q, v) :=& \ \Delta^{(P)}\Psi_{\mathrm{Sh},\ell}(A, 1, Q, v)  \label{e:psitdef} \\
 =& \ (-1)^n \sgn( \det \sigma)\sum_{r} \frac{P_r(\sigma)}{\ell^{\underline{r}}(r+1)!} \D_\ell(\sigma, r + 1, Q, v)
\label{e:pcsdform}  \end{align}
is a homogeneous cocycle for $\Gamma_\ell$ valued in $\tilde{\cF}$, i.e.\ $\tilde{\Psi}_{\mathrm{Sh},\ell} \in Z^{n-1}(\Gamma_\ell, \tilde{\cF}).$
\end{prop}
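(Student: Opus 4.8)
The plan is to deduce the proposition formally from what has already been established about the smoothed cocycle, using two observations: that $\Delta^{(P)}$ is a \emph{linear} operator on $\R[[z_1,\dots,z_n]]$ that reads off only finitely many coefficients (a polynomial-weighted version of coefficient extraction), and that, by Lemma~\ref{l:psisleval}, $\Psi_{\mathrm{Sh},\ell}(A,M,Q,v)$ is a genuine power series depending on $M$ only through the substitution $z\mapsto zM^{t}$. Concretely, writing $\widehat{F}_{A,Q,v}(z):=\Psi_{\mathrm{Sh},\ell}(A,1,Q,v)\in\R[[z]]$ for the series obtained from Lemma~\ref{l:psisleval} by setting $M=1$, that lemma gives $\Psi_{\mathrm{Sh},\ell}(A,M,Q,v)=\widehat{F}_{A,Q,v}(zM^{t})$ for \emph{every} $M\in M_n(\R)$, with $z$-coefficients that are polynomials in the entries of $M$. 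This is exactly what makes (\ref{e:psitdef}) legitimate: one may now apply $\Delta^{(P)}$ for an arbitrary $P\in\cP$, not only for $P=f_M$ with $M\in\cM$.

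To pass from (\ref{e:psitdef}) to the explicit formula (\ref{e:pcsdform}) (which is the formula of Theorem~\ref{t:lexplicit} with $P$ in place of $f_M^{k}$), I would first record the twisting identity
\[
\Delta^{(P)}\!\bigl(F(zN^{t})\bigr)=\Delta^{(N\cdot P)}(F),\qquad (N\cdot P)(z):=P(zN),
\]
valid for all $F\in\R[[z]]$, all $N\in M_n(\R)$, and all $P\in\cP$. Its proof is word for word that of Lemma~\ref{l:Mtwist}: expanding both sides in the coefficients of $F$ reduces the claim to the reciprocity identity (\ref{e:crs}), which makes no use of the particular polynomial $(z_1\cdots z_n)^{k}$. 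Applying $\Delta^{(P)}$ term by term to the explicit expression for $\widehat{F}_{A,Q,v}$ furnished by Lemma~\ref{l:psisleval}, and using this identity with $N=\sigma^{t}$ so that $\Delta^{(P)}((z\sigma)^{r})=(\sigma^{t}\cdot P)_r(1)=P_r(\sigma)$, then yields precisely (\ref{e:pcsdform}).

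It remains to check that $\tilde{\Psi}_{\mathrm{Sh},\ell}\in Z^{n-1}(\Gamma_\ell,\tilde{\cF})$. Linearity of $(P,Q,v)\mapsto\tilde{\Psi}_{\mathrm{Sh},\ell}(A,P,Q,v)$ in $P$ is immediate from linearity of $P\mapsto P_r(1)$. The distribution relation of $\tilde{\cF}$, $\Gamma_\ell$-equivariance, and the cocycle relation $d\tilde{\Psi}_{\mathrm{Sh},\ell}=0$ are all obtained the same way: start from the corresponding identity satisfied by $\Psi_{\mathrm{Sh},\ell}\in Z^{n-1}(\Gamma_\ell,\cF)$ --- the distribution relation (\ref{e:dist}) of $\cF$, the equivariance built into a homogeneous cochain, or $d\Psi_{\mathrm{Sh},\ell}=0$ --- rewrite each instance of $\Psi_{\mathrm{Sh},\ell}(A,M,Q,v)$ as $\widehat{F}_{A,Q,v}(zM^{t})$, specialize to $M=1$ (legitimate because $\cM$ is dense in $M_n(\R)$ and the coefficients are polynomial in $M$), obtaining an identity in $\R[[z]]$, and then apply $\Delta^{(P)}$. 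For the distribution relation this converts $\widehat{F}_{A,Q,v}(z)=\sgn(\lambda)^{n}\sum_{\lambda w=v}\widehat{F}_{A,\lambda^{-1}Q,w}(\lambda z)$ into the relation defining $\tilde{\cF}$, using that for homogeneous $P$ of degree $d$ one has $\Delta^{(P)}(G(\lambda z))=\lambda^{d}\Delta^{(P)}(G)=\Delta^{(\lambda^{d}P)}(G)$. For equivariance it converts the substitution $z\mapsto zB$ (with $B$ an integral multiple of $\gamma\in\Gamma_\ell$) coming from the $\cF$-action (\ref{e:vaction}) into $P\mapsto P(zB^{t})$, which is by definition the substitution in the $\Gamma_\ell$-action on $\tilde{\cF}$; the cocycle relation is even simpler, since $\Delta^{(P)}$ is linear and commutes with the $M$-free alternating sum.

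The main obstacle I anticipate is purely organizational: aligning the two module structures $\cF$ and $\tilde{\cF}$ --- with the transposes they carry and the normalization implicit in (\ref{e:vaction}) --- under $\Delta^{(P)}$, and writing out cleanly the step ``identity valid for every $M\in\cM$ $\Rightarrow$ identity of power series'', which rests on density of $\cM$ in $M_n(\R)$ together with the polynomial $M$-dependence exhibited in Lemma~\ref{l:psisleval}. Once the twisting identity of the second paragraph is in hand, no genuinely new computation is required: the proposition is a formal consequence of $\Psi_{\mathrm{Sh},\ell}$ being a cocycle valued in $\cF$ whose values are honest power series.
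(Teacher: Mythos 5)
Your proposal is correct and follows essentially the paper's own argument: the twisting identity you state is precisely the paper's generalization (\ref{e:gammatwist}) of Lemma~\ref{l:Mtwist} (proved, as you say, from (\ref{e:crs})), the formula (\ref{e:pcsdform}) is obtained exactly as in the paper by combining Lemma~\ref{l:psisleval} with that identity for $\gamma=\sigma$, and the cocycle, equivariance, and distribution properties are deduced formally from those of $\Psi_{\mathrm{Sh},\ell}$. Your extra remark on extending identities from $M\in\cM$ to $M=1$ via polynomial dependence of the coefficients is a harmless elaboration of a step the paper leaves implicit, not a different route.
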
 
\begin{proof}
The cocycle condition 
 $ \sum_{i=0}^{n} (-1)^i \tilde{\Psi}_{\mathrm{Sh},\ell}(A_0, \dotsc, \hat{A}_i, \dotsc, A_n) = 0 $
 follows from that for $\Psi_{\mathrm{Sh},\ell}$.  The fact that
$\tilde{\Psi}_{\mathrm{Sh},\ell}$ is invariant under $\Gamma_\ell$ follows from the equivalent statement
for $\Psi_{\mathrm{Sh},\ell}$ and the  fact that for any matrix $\gamma \in M_n(\R)$ and any 
$F \in \R[[z_1, \dots, z_n]]$, we have
\begin{equation} \label{e:gammatwist}
\Delta^{(\gamma^t P)} F(z) = \Delta^{(P)} F(z\gamma).
\end{equation}
Equation (\ref{e:gammatwist}) is a mild generalization of Lemma \ref{l:Mtwist} that again follows from
(\ref{e:crs}).
The equality between (\ref{e:psitdef}) and (\ref{e:pcsdform}) follows from Lemma \ref{l:psisleval} and (\ref{e:gammatwist})
applied to $\gamma = \sigma$.
\end{proof}

 \bigskip
 \bigskip

 P.~C. :  \textsc{Institut de Math\'ematiques de Jussieu, Universit\'e Paris 6,  France} 

 \textit{E-mail} \textbf{charollois (at) math (dot) jussieu (dot) fr}

 \medskip

 S.~D. : \textsc{Dept. of Mathematics, University of California  Santa Cruz, USA} 

 \textit{E-mail} \textbf{sdasgup2 (at) ucsc (dot) edu}
 
 \medskip
 
 M.~G. : \textsc{Dept. of Mathematics \& Statistics, University of Calgary, Canada} 

 \textit{E-mail} \textbf{mgreenbe (at) ucalgary (dot) ca}


\begin{thebibliography}{XXX}


\bibitem[Bs]{barskypapier}
D.~Barsky. {\em Fonctions z\^eta $p$-adiques d'une classe de rayon des corps totalement r\'eels.}
 Groupe d'\'etude d'analyse ultram\'etrique (5\`eme ann\'ee 1977/78), expos\'e no. 16, 23 pp.  errata 1978-1979.

\bibitem[Bv]{bar} A.~Barvinok. \emph{The complexity of generating functions for integer points in polyhedra and beyond.} Proceedings of the ICM, Madrid, Spain, 2006, 763--787.

\bibitem[BKL]{bkl} A.~Beilinson, G.~Kings, A.~Levin. \emph{Topological polylogarithms and $p$-adic interpolation of $L$-values of totally real fields.} Preprint (2014).  \url{http://arxiv.org/abs/1410.4741}

\bibitem[Br]{brownbook}
K.~Brown.  Cohomology of groups. Graduate Texts in Mathematics, Vol. \textbf{87}, Springer-Verlag, New-York, 1982.
 

\bibitem[Ca]{cn}
Pi.~Cassou-Nogu\`es. {\em Valeurs aux entiers n\'egatifs des fonctions z\^eta et fonctions z\^eta $p$-adiques}.
 Invent. Math. \textbf{51}  (1979), no. 1,  29--59.



\bibitem[CD]{pcsd} P.~Charollois, S.~Dasgupta.  {\em Integral Eisenstein cocycles on $\GL_n$, I: 
Sczech's cocycle and $p$-adic $L$-functions of 
totally real fields}. Cambridge J. of Math. \textbf{2} no. 1 (2014), 49-90.


\bibitem[Co1]{colmezresidue} P.~Colmez.   {\em R\'esidu en $s=1$ des fonctions z\^eta $p$-adiques}. Invent. Math. \textbf{91}  (1988),  371--389. 

\bibitem[Co2]{colmezalgebraicite} P.~Colmez.   {\em Alg\'ebricit\'e de valeurs sp\'eciales de fonctions $L$}. Invent. Math. \textbf{95}  (1989),  161--205. 


\bibitem[Co3]{colmez} P.~Colmez.  {\em La m\'ethode de Shintani et ses variantes}. Unpublished preprint.



 \bibitem[Das]{dasshin}  S.~Dasgupta. {\em Shintani Zeta Functions and Gross-Stark units for totally real fields}. Duke Math. Journal, \textbf{143} (2008), no. 2, 225--279.

\bibitem[DS]{ds} S.~Dasgupta, M.~Spiess. {\em Principal minors of Gross's $p$-adic regulator and the Eisenstein cocycle}, in preparation.

\bibitem[DR]{dr}
P.~Deligne, K.~Ribet.{\em Values of abelian $L$-functions at negative integers over totally real fields}. Invent.\ Math. \textbf{59} (1980), no. 3, 227--286.

 

 \bibitem[DDF]{ddf}  F.~Diaz y Diaz,  E.~Friedman.  {\em Signed fundamental domains for totally real number fields}. Proc. London Math. Soc. \textbf{108} no. 4 (2014), 965-988.

\bibitem[Gr]{gross} B.~Gross, {\em $p$-adic $L$-series at $s=0$}.
 J.\ Fac.\ Sci.\ Univ.\ Tokyo Sect.\ IA Math. {\bf 28} (1981), no. 3, 979--994.


\bibitem[Hi]{hill} R.~Hill.  {\em Shintani cocycles on $\GL_n$}. Bull. of the LMS \textbf{39} (2007), 993-1004. 

\bibitem[HS]{husolomon} S.~ Hu, D.~Solomon.  {\em Properties of the higher-dimensional Shintani generating functions and cocycles on $\PGL_3(\Q)$}. Proc. LMS \textbf{82} (2001), no. 1, 64-88. 


\bibitem[IO]{oda}
T.~Ishii, T.~Oda.  \emph{A short history on investigation of the special values of zeta and $L$-functions of totally real number fields.}  In \emph{Automorphic forms and zeta functions. Proc. of the conf.  in memory of Tsuneo Arakawa},  World Sci. Publ. (2006), 198-233.
 

\bibitem[KP]{KP} A.~G.~Khovanski, G.~Pukhlikov. \emph{The Riemann-Roch theorem for integrals and sums of quasipolynomials on virtual polytopes}.  Algebra i Analiz \textbf{4} (1992), no.\ 4, 789-812.

\bibitem[La]{Lawrence} J.~Lawrence. \emph{Rational-function-valued valuations on polyhedra}. In \emph{Discrete and Computational Geometry}, DIMACS Ser.\ Discrete Math.\ Theoret.\ Comput.\ Sci.\ 6, Amer.\ Math.\ Soc.\, Providence, RI, 1991, 199-208.



\bibitem[Sc1]{sczcom}
 R.~Sczech. {\em Eisenstein cocycles for $\GL_2(\Q)$ and values of $L$-functions in real quadratic fields}. 
 Comment.\ Math.\ Helv.\ {\textbf 67} (1992), no.\ 3, 363--382.

\bibitem[Sc2]{sczech}
 R.~Sczech. {\em Eisenstein group cocycles for $\GL_n$ and values of $L$-functions}. Invent.\ Math. \textbf{113} (1993), no. 3, 581--616.

\bibitem[Sc3]{Scznote}
 R.~Sczech. {\em A remark on the Eisenstein-Shintani cocycle}. Unpublished preprint (1998). 

\bibitem[Sh]{Sh}
 T.~Shintani. {\em On evaluation  of zeta functions of totally real algebraic number fields}. J. Fac. Sci Univ. Tokyo, Sect. IA Math  \textbf{23} (1976), no. 2, 393-417.

\bibitem[So1]{So}
 D.~Solomon. {\em Algebraic properties of Shintani's generating functions: Dedekind sums and cocycles on $\PGL_2(\Q)$}. Compositio Math. \textbf{112} (1998), no. 3, 333-362.


\bibitem[So2]{SoJNT}
 D.~Solomon. {\em The Shintani cocycle II. Partial $\zeta$-functions, cohomologous cocycles and $p$-adic interpolation}. Journal of Number Theory \textbf{75} (1999),  53-108.


\bibitem [Sp1]{sphmf}  M.~Spiess.  {\em On special zeros of $p$-adic $L$-functions of Hilbert modular forms}. Invent. \ Math. \textbf{196} (2014), 69-138.

\bibitem [Sp2]{spiess}  M.~Spiess.  {\em Shintani cocycles and the vanishing order of $p$-adic Hecke $L$-series at $s=0$}. Math. Annalen \textbf{359} (2014), 239-265.

\bibitem[Stev]{st} G.~Stevens. {\em The Eisenstein measure and real quadratic fields.} {\em Th\'eorie des nombres (Qu\' ebec, PQ, 1987),} 887-927, de Gruyter, Berlin, 1989.

\bibitem[Stee]{steele} A.~Steele. {\em The $p$-adic Shintani cocycle}. Mathematical Research Letters \textbf{21} no. 2 (2014), 403-422.

\bibitem[We]{W}
 U.~Weselmann. {\em An elementary identity in the theory of Hecke $L$-functions}. Invent.\ Math. \textbf{95} (1989), 207-214.

\bibitem[Wi]{Wi}
 A.~Wiles. {\em The Iwasawa conjecture for totally real  fields}. Ann. of Math. (2) \textbf{131} (1990), no. 3, 493-540.




\end{thebibliography}
\end{document}